\theoremstyle{plain}
\newtheorem{lemma}{Lemma}[section]
\newtheorem{theorem}[lemma]{Theorem}
\newtheorem{corollary}[lemma]{Corollary}
\newtheorem{proposition}[lemma]{Proposition}
\newtheorem{problem}[lemma]{Problem}
\newtheorem{conjecture}[lemma]{Conjecture}
\theoremstyle{remark}
\newcommand{\R}{\mathbb{R}}
\newcommand{\C}{\mathbb{C}}
\newcommand{\Y}{\mathbb{Y}}
\newcommand{\X}{\mathcal{X}}
\newcommand{\F}{\mathcal{F}}
\newcommand{\p}{\mathbf{p}}
\newcommand{\q}{\mathbf{q}}
\newcommand{\Sym}{\mathfrak{S}}
\newcommand{\stirling}[2]{\genfrac{\{}{\}}{0pt}{}{#1}{#2}}
\newcommand{\I}{\mathcal{I}}
\newcommand{\M}{\mathcal{M}}
\newcommand{\V}{\mathcal{V}}
\newcommand{\K}{\mathcal{K}}
\newcommand{\NN}{Q}      
\newcommand{\ee}{e$^2$}
\newcommand{\eee}{e$^3$}
\newcommand{\eeee}{e$^4$}
\newcommand{\eeeee}{e$^5$}
\DeclareMathOperator{\Tr}{Tr}
\DeclareMathOperator{\contents}{contents}
\DeclareMathOperator{\id}{id}
\DeclareMathOperator{\NC}{NC}
\DeclareMathOperator{\Bad}{Bad}
\DeclareMathOperator{\trivial}{trivial}
\DeclareMathOperator{\dimension}{dim}
\DeclareMathOperator{\Imag}{Im}
\author{Maciej Dołęga}
\address{Institute of Mathematics,
University of Wroclaw,  \mbox{pl.\ Grunwaldzki~2/4,} 50-384
Wroclaw, Poland} 
\email{Maciej.Dolega@math.uni.wroc.pl}
\author{Valentin Féray}
\address{LaBRI, University Bordeaux 1, 351 cours de la Libération, 33400
Talence, France}
\email{feray@labri.fr}
\author{Piotr \'Sniady}
\address{Institute of Mathematics, Polish Academy of Sciences,
ul.~Śniadec\-kich 8, 00-956 Warszawa, Poland \newline
\indent Institute of Mathematics,
University of Wroclaw,  \mbox{pl.\ Grunwaldzki~2/4,} 50-384
Wroclaw, Poland}
\email{Piotr.Sniady@math.uni.wroc.pl}
\title[Explicit combinatorial interpretation of Kerov 
polynomials]{Explicit combinatorial interpretation \\ of Kerov character
polynomials  \\ as numbers of permutation factorizations}
\begin{document}

\begin{abstract}
We find an explicit combinatorial interpretation of the coefficients
of Kerov character polynomials which express the value of normalized
irreducible characters of the symmetric groups $\Sym(n)$ in terms of free
cumulants $R_2,R_3,\dots$ of the corresponding Young diagram. 
Our interpretation is based on counting certain factorizations of a given
permutation. 
\end{abstract}

\maketitle

% \tableofcontents

\section{Introduction}

\subsection{Generalized Young diagrams}
\label{subsec:generalized}
We are interested in the asymptotics of irreducible representations of the
symmetric groups $\Sym(n)$ for $n\to\infty$ in the scaling of \emph{balanced
Young diagrams} which means that we consider a sequence $(\lambda^{(n)})$ of
Young diagrams  with a property that $\lambda^{(n)}$ has $n$ boxes and
$O(\sqrt{n})$ rows and columns. This scaling makes the graphical representations
of Young diagrams particularly useful; in this article we will use two
conventions for drawing Young diagrams: the French (presented on Figure
\ref{fig:french}) and the Russian one (presented on Figure \ref{fig:russian}).
Notice that the graphs in the Russian convention are created from the graphs in
the French convention by rotating counterclockwise by $\frac{\pi}{4}$ and by
scaling by a factor $\sqrt{2}$.

Any Young diagram drawn in the French convention can be identified with its
graph which is equal to the set
$\{ (x,y): 0\leq x, 0\leq y\leq f(x) \} $ for a suitably chosen function
$f:\R_+\rightarrow\R_+$, where $\R_+=[0,\infty)$. It is therefore natural to
define the set of \emph{generalized Young diagrams
$\Y$ (in the French convention)} as the set of bounded, non-increasing functions
$f:\R_+\rightarrow\R_+$ with a compact support; in this way any Young diagram
can be regarded as a generalized Young diagram.

\begin{figure}[t]
\includegraphics{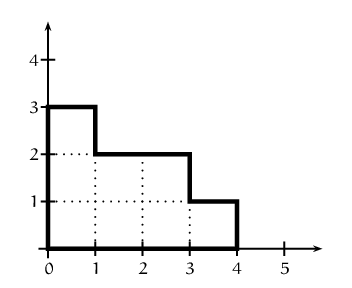} 
\caption{Young diagram $(4,3,1)$ drawn in the French convention}
\label{fig:french}
\end{figure}

We can identify a Young diagram drawn in the Russian convention with its
profile, see Figure \ref{fig:russian}. It is therefore natural to define the set
of \emph{generalized Young diagrams $\Y$ (in the Russian convention)} as the set
of functions $f:\R\rightarrow\R_+$ which fulfill the following two conditions:
\begin{itemize}
 \item $f$ is a Lipschitz function with constant $1$, i.e.\ $|f(x)-f(y)|\leq
|x-y|$, 
\item $f(x)=|x|$ if $|x|$ is large
enough. 
\end{itemize}

\begin{figure}[t]
\includegraphics{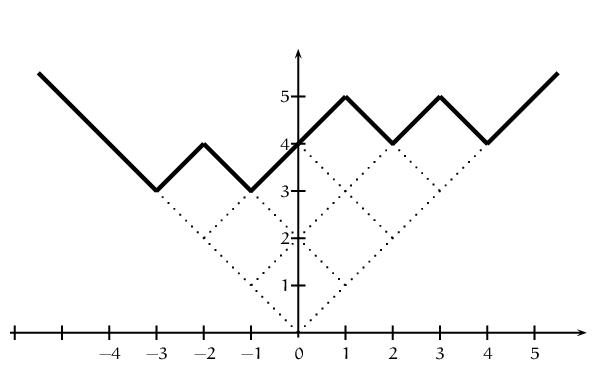} 
\caption{Young diagram $(4,3,1)$ drawn in the Russian convention. The profile
of the diagram has been drawn in the solid line.}
\label{fig:russian}
\end{figure}

At the first sight it might seem that we have defined the set $\Y$ of
generalized Young diagrams in two different ways, but we prefer to think that
these two definitions are just two conventions (French and Russian) for drawing
the same object. This will not lead to confusions since it will be always clear
from the context which of the two conventions is being used.

The setup of generalized Young diagrams makes it possible to speak about
dilations of Young diagrams. In the geometric language of French and Russian
conventions such dilations simply correspond to dilations of the graph.
Formally speaking, if $f\in\Y$ is a generalized Young
diagram (no matter in which convention) and $s>0$ is a real number we define
the dilated diagram $sf\in\Y$ by the formula
$$ (sf)(x)= s f\left(\frac{x}{s}\right). $$
This notion of dilation is very useful in the study of balanced Young diagrams
because if $(\lambda^{(n)})_n$ is a sequence of balanced Young diagrams we may
for example ask questions about the limit of the sequence $\frac{1}{\sqrt{n}}
\lambda^{(n)}$ \cite{LoganShepp1977,VersikKerov1977}.

\subsection{Normalized characters}
Any permutation $\pi\in \Sym(k)$ can be also regarded as an element of $\Sym(n)$
if $k\leq n$ (we just declare that $\pi\in \Sym(n)$ has additional $n-k$
fixpoints).
For any $\pi\in \Sym(k)$ and an irreducible representation $\rho^\lambda$ of the
symmetric group $\Sym(n)$ corresponding to the Young diagram $\lambda$ we
define the normalized character
$$ \Sigma^\lambda_\pi =\begin{cases} \underbrace{n(n-1)\cdots (n-k+1)}_{k \text{
factors}} \frac{\Tr \rho^\lambda(\pi)}{\text{dimension of $\rho^\lambda$}} &
\text{if } k\leq n, \\ 0 & \text{otherwise.} \end{cases}
$$
One of the reasons why such normalized characters are so useful in
the asymptotic representation theory is that, as we shall see in Section
\ref{sec:stanley}, one can extend the definition of
$\Sigma^\lambda_\pi$ to the case when $\lambda\in\Y$ is a generalized Young
diagram; furthermore computing their values will turn out to be easy.
% Our goal in this article is to find explicit formulas for normalized
% characters.

Particularly interesting are the values of characters on cycles, therefore we
will use the notation
$$ \Sigma^\lambda_k = \Sigma^\lambda_{(1,2,\dots,k)}, $$
where we treat the cycle $(1,2,\dots,k)$ as an element of $\Sym(k)$ for any
integer
$k\geq 1$.

\subsection{Free cumulants}
Let $\lambda$ be a (generalized) Young diagram. We define its \emph{free
cumulants} $R_2^\lambda,R_3^\lambda,\dots$ by the formula
\begin{equation} 
\label{eq:free-cumulants-strange}
R_{k}^\lambda = \lim_{s\to\infty} \frac{1}{s^{k}}
\Sigma^{s\lambda}_{k-1}, 
\end{equation}
in other words each free cumulant is asymptotically the dominant term of the
character on a cycle of appropriate length in the limit when the Young diagram
tends 
% in some sense 
to infinity.

From the above definition it is clear that free cumulants should be interesting
for investigations of the asymptotics of characters of symmetric groups, but
it is not obvious why the limit should exist and if there is some direct
way of calculating it. In Sections \ref{sec:functionals-of-measures} and
\ref{sec:generalized-Young-diagrams} we will review some more conventional
definitions of free cumulants and some more direct ways of calculating them.

One of the reasons why free cumulants are so useful in the asymptotic
representation theory is that they are homogeneous with respect to dilations of
the Young diagrams, namely
$$ R^{s\lambda}_k = s^k R^{\lambda}_k;$$
in other words the degree of the free cumulant $R_k$ is equal to $k$. This
property is an immediate consequence of \eqref{eq:free-cumulants-strange}
but 
% we shall see that 
it also follows from more conservative definitions of free cumulants.

In fact, the notion of free cumulants origins from the work of Voiculescu
\cite{Voiculescu1986} where they appeared as coefficients of an $R$-series
which turned out to be useful in description of \emph{free convolution} in the
context of \emph{free probability theory} \cite{VoiculescuDykemaNica1992}. The
name of free cumulants was coined by Speicher \cite{Speicher1998} who found
their combinatorial interpretation and their relations with the lattice of
non-crossing partitions \cite{Speicher1993}. Since free probability theory is
closely related to the random matrix theory \cite{Voiculescu1991} free
cumulants quickly became an important tool not only within the framework of
free probability but in the random matrix theory as well.

\subsection{Kerov character polynomials}
The following surprising fact is fundamental for this article: it turns out 
that free cumulants can be used not only to provide asymptotic
approximations for the characters of symmetric groups, but also for exact
formulas. Kerov during a talk in Institut Henri Poincar\'e in January 2000
\cite{Kerov2000talk} announced the following result (the first published proof
was given by Biane \cite{Biane2003}): for each permutation $\pi$ there exists a
universal polynomial $K_{\pi}$ with integer coefficients, called \emph{Kerov
character polynomial}, with a property that
\begin{equation} 
\label{eq:kerov-polynomial}
\Sigma^{\lambda}_\pi =
K_{\pi}(R_2^{\lambda},R_3^\lambda,\dots)
\end{equation}
holds true for any (generalized) Young diagram $\lambda$.
We say that Kerov polynomial is universal because it does not depend on the
choice of $\lambda$. In order to keep the notation simple we make the dependence
of the characters and of the free cumulants on $\lambda$ implicit and we write 
$$ \Sigma_\pi = K_{\pi}(R_2,R_3,\dots).$$
As usual, we are mostly concerned with the values of the characters on the
cycles, therefore we introduce special notation for such Kerov polynomials
$$ \Sigma_k = K_{k}(R_2,R_3,\dots).$$
Kerov also found the leading term of the Kerov polynomial:
\begin{equation} 
\label{eq:expansion}
\Sigma_k = R_{k+1} + (\text{terms of degree at most $k-1$})
\end{equation}
which has \eqref{eq:free-cumulants-strange} as an immediate consequence.

The first few Kerov polynomials $K_k$ are as follows \cite{Biane2001a}:
\begin{align*}
\Sigma_1 &= R_2, \\
\Sigma_2 &= R_3, \\
\Sigma_3 &= R_4 + R_2,   \\
\Sigma_4 &= R_5 + 5R_3,    \\     
\Sigma_5 &= R_6 + 15R_4 + 5R_2^2 + 8R_2, \\
\Sigma_6 &= R_7 + 35R_5 + 35R_3 R_2 + 84R_3.
\end{align*}
Based on such numerical evidence Kerov formulated during his talk
\cite{Kerov2000talk} the following conjecture.
\begin{conjecture}[Kerov]
\label{conj:Kerov}
The coefficients of Kerov character polynomials $K_{k}$ ($k\geq 1$) are
\emph{non-negative} integers.
% , therefore they have some combinatorial interpretation.
\end{conjecture}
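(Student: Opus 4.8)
The plan is to prove the conjecture by attaching to each coefficient a genuinely combinatorial meaning, so that non-negativity becomes automatic. Note first that the integrality half is already contained in the statement preceding \eqref{eq:kerov-polynomial}, namely that $K_\pi$ is a polynomial \emph{with integer coefficients}; hence the entire content of Conjecture~\ref{conj:Kerov} is the \emph{positivity}. Concretely, I would aim to associate with every free-cumulant monomial $R_{s_1}\cdots R_{s_\ell}$ a finite set of combinatorial objects — certain factorizations of the cycle $(1,2,\dots,k)$ in $\Sym(k)$ — whose cardinality equals the coefficient of that monomial in $K_k$. Once such a set is exhibited, the coefficient is a cardinality, hence a non-negative integer, and the conjecture follows.

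The starting point would be a combinatorial formula for the left-hand side of \eqref{eq:kerov-polynomial}. Using the Stanley--Féray expansion of normalized characters in \emph{multirectangular coordinates} $\p,\q$, one writes $\Sigma_k=\Sigma^{\p\times\q}_k$ as a signed sum over pairs $(\sigma_1,\sigma_2)$ of permutations with $\sigma_1\sigma_2=(1,2,\dots,k)$; each pair contributes, after summing over admissible colourings of its cycles, a monomial in the variables $p_i,q_i$, weighted by a sign governed by the number of cycles of $\sigma_1$. This already exhibits the character as a count of factorizations, but the \emph{signs} obstruct positivity and, moreover, the ambient variables here are $\p,\q$ rather than the free cumulants $R_j$.

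The second ingredient is the passage to free cumulants. Each $R_j$ admits an explicit description in the same coordinates: through the transition measure of $\lambda$ together with Lagrange inversion, equivalently through Speicher's non-crossing-partition calculus, $R_j$ expands as a polynomial in $\p,\q$. I would encode this expansion by a generating series and then extract the coefficient of a prescribed monomial $R_{s_1}\cdots R_{s_\ell}$ in $\Sigma_k$ by applying the corresponding residue or differentiation operator to the Stanley--Féray sum. The output is again a signed sum over factorizations, but now indexed so that the cycle structure of $\sigma_2$ records the multidegree $(s_1,\dots,s_\ell)$.

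The crux — and the step I expect to be the main obstacle — is to show that, after this change of variables, the signs cancel and what remains is a manifestly non-negative count. I would attempt this either by constructing a sign-reversing involution on the factorizations carrying a nontrivial sign, leaving only $+$ terms, or by grouping the factorizations so that the surviving family is cut out by a clean constraint (minimal transitivity, a genus-zero or non-crossing condition, or planarity of the associated map). The difficulty is to make this cancellation compatible with the free-cumulant substitution, so that monomial by monomial the coefficient of $R_{s_1}\cdots R_{s_\ell}$ equals the cardinality of an explicitly described set of factorizations. Establishing such a bijection or involution uniformly in the multidegree is precisely what would upgrade the signed character formula into the sought non-negative combinatorial interpretation, and thereby settle Conjecture~\ref{conj:Kerov}.
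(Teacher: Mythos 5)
Your framework is the right one --- it is in fact the paper's: express $\Sigma_k$ in multirectangular coordinates via the Stanley--Féray formula (Theorem \ref{theo:stanley-feray-old}), expand the free cumulants in the same coordinates, and extract the coefficient of a given monomial $R_{n_1}\cdots R_{n_r}$. But your proposal stops exactly where the proof has to begin. The entire content of the result is the identification of \emph{which} factorizations survive the signed cancellation, and your candidate descriptions of that family (minimal transitivity, a genus-zero or non-crossing condition, planarity of the associated map) are not the right ones. The correct condition is the Hall-marriage-type condition \ref{enum:marriage} of Theorem \ref{theo:main}: for every nontrivial $A\subset C(\sigma_2)$ there are strictly more than $\sum_{i\in A}\big(q(i)-1\big)$ cycles of $\sigma_1$ meeting $\bigcup A$. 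This condition implies some topological restrictions (cf.\ Proposition \ref{prop:restriction-on-topology}) but is not equivalent to any genus or planarity constraint, and it was originally found through computer experimentation; a proposal that leaves this set unspecified has not yet engaged with the actual difficulty.

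Second, the cancellation mechanism you hope for --- a sign-reversing involution applied directly to the Stanley--Féray sum --- is not how the signs disappear in the paper, and nothing in your sketch indicates how such an involution would interact with the free-cumulant substitution, which mixes factorizations with different cycle structures. The paper's route requires several ingredients absent from your plan: (i) an intermediate family of shape functionals $S_2,S_3,\dots$ for which coefficients of Stanley polynomials literally are derivatives (Theorem \ref{theo:stanley-and-S}), together with the explicit transition \eqref{eq:s-r} between the $S$-basis and the $R$-basis; (ii) identities among coefficients of Stanley polynomials (Lemma \ref{lem:tozsamosci-wielomianow-stanleya}); and (iii) an inclusion--exclusion over chains of ``bad'' subsets of $C(\sigma_2)$, collapsed by Euler-characteristic computations (Lemma \ref{lem:zle_zbiory} and Corollaries \ref{coro:good_factorizations}, \ref{coro:good_numbers}, \ref{coro:good_numbers_and_partitions}). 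It is this chain/Euler-characteristic machinery, run from the combinatorial count toward the derivative rather than the other way around, that converts the signed sum into the positive count of triples satisfying \ref{enum:marriage}. So the gap is twofold: the combinatorial family is misidentified, and the cancellation argument is asserted rather than constructed.
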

Biane \cite{Biane2003} stated a very interesting conjecture that the underlying
reason for positivity of the coefficients of Kerov polynomials is that 
they are equal to cardinalities of some combinatorial objects. Biane provided
also some heuristics what these combinatorial objects could be (we postpone the
details until Section \ref{subsubsec:biane}).

Since then a number of partial answers were found. Śniady \cite{'Sniady2006a}
found explicitly the next term (with degree $k-1$) in the expansion
\eqref{eq:expansion} (the form of this next term was conjectured by
Biane \cite{Biane2003}). Goulden and Rattan \cite{GouldenRattan2007} found an
explicit but complicated formula for the coefficients of Kerov polynomials.
These results, however, did not shed too much light into possible combinatorial
interpretations of Kerov character polynomials.

Some light on the possible combinatorial interpretation of Kerov polynomials
was shed by the following result proved by Biane in the
aforementioned paper \cite{Biane2003} and Stanley \cite{Stanley2002}.
\begin{theorem}[Linear terms of Kerov polynomials]
\label{theo:linear}
For all integers $l\geq 2$ and $k\geq 1$ the coefficient of $R_l$ in the Kerov
polynomial $K_{k}$ is equal to the number of pairs $(\sigma_1,\sigma_2)$ of
permutations $\sigma_1,\sigma_2\in\Sym(k)$ such that $\sigma_1 \circ
\sigma_2=(1,2,\dots,k)$ and such that $\sigma_2$ consists of one cycle and
$\sigma_1$ consists of\/ $l-1$ cycles.
\end{theorem}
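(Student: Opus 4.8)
The plan is to deduce the statement from Stanley's character formula in multirectangular coordinates, which will be developed in Section~\ref{sec:stanley}. Writing $c(\cdot)$ for the number of cycles of a permutation (fixed points included), that formula evaluates $\Sigma_k$ on a multirectangular generalized Young diagram with coordinates $\p=(p_1,p_2,\dots)$ and $\q=(q_1,q_2,\dots)$ as a signed sum over all factorizations of the $k$-cycle,
$$\Sigma_k=\sum_{\sigma_1\circ\sigma_2=(1,2,\dots,k)}(-1)^{c(\sigma_2)}\,N_{\sigma_1,\sigma_2}(\p,\q),$$
where the monomial $N_{\sigma_1,\sigma_2}$ carries one factor $p_i$ for every cycle of $\sigma_1$ and one factor $q_j$ for every cycle of $\sigma_2$, its precise shape being fixed by a compatibility rule between the two cycle decompositions, and where I suppress an overall global sign. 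Since the free cumulants are themselves polynomials in $\p,\q$ for such diagrams, substituting these coordinate expressions turns the universal identity $\Sigma_k=K_{k}(R_2,R_3,\dots)$ into an equality of two polynomials in $\p,\q$, from which I intend to read off the coefficient of $R_l$.

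The device I would use to isolate the \emph{linear} terms is the grading by degree in the $\q$-variables; write $[\,\cdot\,]_{\q\text{-lin}}$ for the part of a polynomial in $\p,\q$ that is homogeneous of degree one in the $\q$-variables. Setting all $q_j=0$ collapses every rectangle to height zero, hence yields the empty diagram, so every free cumulant vanishes and thus every monomial occurring in any $R_l$ has $\q$-degree at least $1$. Consequently a product $R_{a_1}\cdots R_{a_m}$ has $\q$-degree at least $m$, so the $\q$-linear part of $K_{k}(R_2,R_3,\dots)$ can receive contributions only from the genuinely linear monomials $R_l$, and from each only through its own $\q$-linear part:
$$\big[\Sigma_k\big]_{\q\text{-lin}}=\sum_{l\geq 2} a_l\,\big[R_l\big]_{\q\text{-lin}},$$
where $a_l$ denotes the sought coefficient of $R_l$ in $K_k$. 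On the Stanley side a term $N_{\sigma_1,\sigma_2}$ is linear in $\q$ exactly when $\sigma_2$ has a single cycle, that is when $\sigma_2$ is a full $k$-cycle; hence extracting $[\Sigma_k]_{\q\text{-lin}}$ leaves precisely the signed sum over the factorizations named in the statement.

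It remains to match the $\q$-linear part of each $R_l$ against these surviving terms. For this I would exploit the description of $R_l$ as the top-degree component of $\Sigma_{l-1}$ furnished by \eqref{eq:free-cumulants-strange} and \eqref{eq:expansion}: applying Stanley's formula to $\Sigma_{l-1}$ and keeping the homogeneous top-degree part re-expresses $R_l$ as a signed sum over minimal factorizations of an $(l-1)$-cycle, whose $\q$-linear piece is again governed by a single-cycle second factor. Grading additionally by the degree in $\p$, which equals $c(\sigma_1)$, lets me pin the index to $l=1+c(\sigma_1)$, so that the count attached to $R_l$ ranges over factorizations with $\sigma_1$ having exactly $l-1$ cycles. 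As a consistency check, the factorization with $\sigma_1=\id$, for which $c(\sigma_1)=k$, reproduces the leading term $R_{k+1}$ of \eqref{eq:expansion}.

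The hard part will be the bookkeeping in this final step. I must verify that the $\q$-linear parts of the various $R_l$ are separated by their $\p$-degree $l-1$ cleanly enough for the coefficients $a_l$ to be extracted unambiguously, and, more delicately, that after extraction every admissible pair $(\sigma_1,\sigma_2)$ is counted with coefficient exactly $+1$. The latter reduces to checking that the sign in Stanley's formula, which for a single-cycle $\sigma_2$ contributes the factor $(-1)^{c(\sigma_2)}=-1$ against the global sign, conspires to $+1$ on all surviving terms; this cancellation is precisely what produces the non-negative integer counts asserted in the theorem.
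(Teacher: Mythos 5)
Your proposal is correct in substance and runs on the same engine as the paper, namely the Stanley--F\'eray formula (Theorem \ref{theo:stanley-feray-old}), but the extraction device is genuinely different. The paper obtains Theorem \ref{theo:linear} as the special case $s_l=1$, $s_i=0$ ($i\neq l$) of Theorem \ref{theo:main}, where condition \ref{enum:marriage} is vacuous because $C(\sigma_2)$ is a singleton; the underlying mechanism there is the $S$-calculus: \eqref{eq:s-r} identifies $\partial/\partial R_l$ with $\partial/\partial S_l$ at the origin, and Theorem \ref{theo:stanley-and-S} identifies the latter with the single Stanley coefficient $[p_1q_1^{l-1}]\,\Sigma_k(\p\times\q)$, which Stanley--F\'eray then counts. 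You instead bigrade the Stanley polynomial: every monomial of every $R_j(\p\times\q)$ contains at least one variable from each family (the diagram is empty when either family vanishes), so the part of $K_k(R_2,R_3,\dots)$ linear in one family sees only the linear terms $a_lR_l$; homogeneity of degree $l$ then separates the different $l$ by degree in the other family, and you identify the Stanley polynomial of $R_l$ itself via \eqref{eq:expansion}, as the top-degree part of $\Sigma_{l-1}$, for which the unique surviving factorization is the trivial one. This bypasses $S_2,S_3,\dots$ and Theorem \ref{theo:stanley-and-S} entirely and is more elementary, but it is special to the linear case: for a product $R_{l_1}R_{l_2}$ the analogous strata of different products overlap in degree and condition \ref{enum:marriage} appears, which is exactly what the paper's $S$-machinery and Euler-characteristic lemmas are built to handle. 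Your deferred bookkeeping does go through: for factorizations of a $k$-cycle one has $|C(\sigma_1)|+|C(\sigma_2)|\equiv k+1 \pmod 2$, so the sign in the formula is constant on each stratum and cancels between the $\Sigma_k$ side and the $R_l$ side, since both are evaluated on single-cycle $\sigma_2$.

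One detail must be repaired. You attach $p$'s to cycles of $\sigma_1$ and $q$'s to cycles of $\sigma_2$; Theorem \ref{theo:stanley-feray-old} does the opposite: $q_{\phi_1(b)}$ for $b\in C(\sigma_1)$ and $p_{\phi_2(c)}$ for $c\in C(\sigma_2)$, with $\phi_2$ free and $\phi_1$ determined by a maximum. Under the paper's convention your assertion that a term is linear in $\q$ exactly when $\sigma_2$ has a single cycle is false; linearity in $\q$ corresponds to $\sigma_1$ being a single cycle. The fix is immediate and even simplifies the argument: grade by $\p$ instead. Then the $\p$-linear part of $\Sigma_k(\p\times\q)$ is precisely the sum over factorizations with $\sigma_2$ a single cycle, each contributing $+\sum_j p_j q_j^{|C(\sigma_1)|}$ (the single cycle of $\sigma_2$ forces all indices of $\phi_1$ to coincide with its $\phi_2$-value), while $[R_l]_{\p\mathrm{-lin}}=\sum_j p_jq_j^{l-1}$, and matching coefficients yields the theorem. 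Alternatively, keep your grading but apply at the end the bijection $(\sigma_1,\sigma_2)\mapsto(\sigma_1\sigma_2\sigma_1^{-1},\sigma_1)$, which exchanges the cycle types of the two factors.
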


For a permutation $\pi$ we denote by $C(\pi)$ the set of cycles of $\pi$.
Féray \cite{F'eray-preprint2008} extended the above result to the quadratic
terms of Kerov polynomials.
\begin{theorem}[Quadratic terms of Kerov polynomials]
\label{theo:quadratic}
For all integers $l_1,l_2 \geq 2$ and $k\geq 1$ the coefficient of $R_{l_1}
R_{l_2}$ in
the Kerov polynomial $K_{k}$ is equal to the number of triples
$(\sigma_1,\sigma_2,q)$ with the following properties:
% \begin{enumerate}[label=(\alph*)]
\begin{itemize}
 \item 
% \label{enum:quadratic-a} 
$\sigma_1,\sigma_2$ is a factorization of the
cycle; in other words $\sigma_1,\sigma_2\in \Sym(k)$ are such that $\sigma_1
\circ
\sigma_2=(1,2,\dots,k)$;
 \item $\sigma_2$ consists of two cycles and $\sigma_1$ consists of $l_1+l_2-2$
cycles;
 \item 
% \label{enum:quadratic-c} 
$q:C(\sigma_2)\rightarrow \{l_1,l_2\}$ is a
surjective map on the
two cycles of $\sigma_2$;
 \item for each cycle $c\in C(\sigma_2)$ there are at least $q(c)$ cycles of
$\sigma_1$ which intersect nontrivially $c$.
\end{itemize}
\end{theorem}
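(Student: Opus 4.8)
The plan is to turn identity \eqref{eq:kerov-polynomial} into an identity between explicit polynomials and then extract a single coefficient. Recall from Section~\ref{sec:stanley} that on a Young diagram assembled from rectangles with parameters $\mathbf{p}=(p_1,\dots,p_m)$ and $\mathbf{q}=(q_1,\dots,q_m)$ (multirectangular coordinates), Stanley's formula expresses $\Sigma_k$ as a signed sum over pairs $(\sigma_1,\sigma_2)$ with $\sigma_1\circ\sigma_2=(1,2,\dots,k)$, each term carrying (up to a global sign) the sign $(-1)^{|C(\sigma_1)|}$ and a monomial in $(\mathbf{p},\mathbf{q})$ determined by a colouring of the cycles of $\sigma_1,\sigma_2$. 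First I would also record, from Section~\ref{sec:generalized-Young-diagrams}, that every free cumulant $R_j$ is a polynomial in $(\mathbf{p},\mathbf{q})$, so that \eqref{eq:kerov-polynomial} becomes an identity between two explicit polynomials and the desired number is the coefficient of $R_{l_1}R_{l_2}$ on the right-hand side.

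Next I would isolate the monomial $R_{l_1}R_{l_2}$. Since the free cumulants $R_2,R_3,\dots$ are algebraically independent coordinates on $\Y$, the coefficient is determined by evaluating both sides on a sufficiently rich family of generalized diagrams; diagrams assembled from two rectangles, together with the dilation gradation $R_j\mapsto s^jR_j$, suffice to detect products of two free cumulants of total degree $l_1+l_2$ and to separate them from the remaining monomials. On such configurations Euler's formula for the connected bipartite map carried by $(\sigma_1,\sigma_2,(1,2,\dots,k))$ reads $|C(\sigma_1)|+|C(\sigma_2)|+1=k+2-2g$; the constraints $|C(\sigma_2)|=2$ and $|C(\sigma_1)|=l_1+l_2-2$ then force $g=(k+1-l_1-l_2)/2\ge 0$, which is exactly the genus at which a degree-$(l_1+l_2)$ term of $\Sigma_k$ can occur. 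Thus the two cycles of $\sigma_2$ are the combinatorial shadows of the two factors $R_{l_1},R_{l_2}$, and $q\colon C(\sigma_2)\to\{l_1,l_2\}$ merely records which cycle produces which factor: its surjectivity reproduces the two assignments when $l_1\ne l_2$ and the single assignment when $l_1=l_2$.

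The crux is to show that the \emph{signed} Stanley sum, with its sign $(-1)^{|C(\sigma_1)|}$, collapses to the asserted \emph{nonnegative} count and in particular produces the intersection condition. Here I expect to use the passage from the moments of the transition measure to its free cumulants, governed by the lattice of non-crossing partitions \cite{Speicher1998}, as the source of the corrections that distinguish a genuine product of two free cumulants from a naive factorization count. The mechanism should be a Möbius inversion, equivalently a sign-reversing involution on the factorizations: a term survives precisely when, for each cycle $c\in C(\sigma_2)$ carrying the label $q(c)$, the $q(c)$ free-cumulant ``legs'' at $c$ can be realized by $q(c)$ distinct cycles of $\sigma_1$ meeting $c$, which is exactly the requirement that at least $q(c)$ cycles of $\sigma_1$ intersect $c$ nontrivially; all remaining terms cancel in pairs. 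Theorem~\ref{theo:linear} would serve both as the base case (one cycle of $\sigma_2$, where the condition is vacuous) and as a consistency check on the bookkeeping.

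The main obstacle is precisely this last cancellation: proving that the alternating sum over factorizations telescopes to the stated cardinality and that the surviving terms are characterized by the neighbour-count condition rather than a coarser or finer constraint. Making the involution explicit — deciding which cycle of $\sigma_1$ to merge or split when the condition fails, while controlling the resulting change in $|C(\sigma_1)|$ and in the genus $g$ — is the delicate combinatorial heart of the argument. The remaining ingredients (Stanley's formula, the polynomial expansion of the free cumulants, and the degree/genus accounting) are essentially bookkeeping once this cancellation is established.
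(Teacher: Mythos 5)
Your proposal starts from the same place as the paper---the Stanley--F\'eray formula on multirectangular diagrams $\p\times\q$, turning \eqref{eq:kerov-polynomial} into an identity of explicit polynomials and extracting one coefficient---but the heart of the argument is missing. You yourself flag it: the claim that the signed Stanley sum ``collapses'' to the nonnegative count, with the intersection condition emerging as the survival criterion, is deferred to an unspecified sign-reversing involution (``I expect to use\dots'', ``The mechanism should be\dots''). That cancellation is precisely the theorem; without constructing the involution and verifying it preserves the factorization equation while pairing off exactly the terms violating the neighbour-count condition, nothing has been proved. A secondary gap: your separation step is too weak. The dilation gradation $R_j\mapsto s^jR_j$ only distinguishes monomials by total degree, so it cannot separate $R_{l_1}R_{l_2}$ from $R_{l_1'}R_{l_2'}$ with $l_1+l_2=l_1'+l_2'$, nor from the linear term $R_{l_1+l_2}$; and genuine work is needed to show two-rectangle evaluations carry enough information at all, since infinitely many algebraically independent cumulants are being restricted to four variables.

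It is worth seeing how the paper avoids any involution. Proposition~\ref{prop:relation-between-r-and-s} (equation \eqref{eq:s-r}) converts $R$-derivatives into $S$-derivatives,
$$ \frac{\partial^2}{\partial R_{l_1}\,\partial R_{l_2}} \F
 = \frac{\partial^2}{\partial S_{l_1}\,\partial S_{l_2}} \F
 + (l_1+l_2-1)\,\frac{\partial}{\partial S_{l_1+l_2}} \F
 \qquad (\text{at } R=S=0), $$
and Theorem~\ref{theo:stanley-and-S} identifies each $S$-derivative with a single coefficient of the Stanley polynomial; Lemma~\ref{lem:tozsamosci-wielomianow-stanleya} then rewrites the second term so that the whole derivative becomes the \emph{difference} of two coefficients, equation \eqref{eq:nabula}. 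This is exactly the precise coefficient-extraction your dilation argument cannot supply, and it also disposes of the separation issue. On the combinatorial side, the paper counts the triples by inclusion--exclusion over the two ``bad'' events (a cycle of $\sigma_2$ meeting too few cycles of $\sigma_1$), notes that transitivity of $\langle\sigma_1,\sigma_2\rangle$ makes the two events mutually exclusive, converts each of the three resulting counts into coefficients $[p_1p_2q_1^iq_2^j]\Sigma_k^{\p\times\q}$ via the Stanley--F\'eray formula, and matches the outcome with \eqref{eq:nabula} using the symmetry of such coefficients (Corollary~\ref{coro:order-does-not-matter}): only the pairs $(0,l_1+l_2-2)$ and $(l_1-1,l_2-1)$ survive. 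So all cancellation is done at the level of algebraic identities among Stanley coefficients, never by pairing factorizations---this is the ingredient you would need to invent to complete your route.
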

In fact, Féray \cite{F'eray-preprint2008} managed also to prove positivity of
the coefficients of Kerov character polynomials by finding some combinatorial
objects with appropriate cardinality, but his proof was so complicated that the
resulting combinatorial objects were hardly explicit in more complex cases. We 
compare this work with our new result in Section \ref{sect:compar}

% The following result was conjectured by Biane \cite{Biane2003}.
% \begin{theorem}[Śniady \cite{'Sniady2006a}] For $k\geq 1$ the expansion of 
% $K_k$ into the terms of the two highest degrees is  given by
% \begin{multline*} \Sigma_{k}= R_{k+1} + \\ \frac{(k-1)k(k+1)}{24}
% \sum_{\substack{j_2,j_3,\dots\geq 0, \\  2 j_2+3 j_3+\cdots=k-1}}
% \frac{(j_2+j_3+\cdots)!}{j_2! j_3! \cdots} \prod_{i\geq 2} \big( (i-1)
% R_i)^{j_i}+ \\ \text{(terms of degree at most $k-3$)}.
% \end{multline*}
% \end{theorem}

\subsection{The main result: explicit combinatorial interpretation of the
coefficients of Kerov polynomials}

The following theorem is the main result of the paper: it gives a
satisfactory answer for the Kerov conjecture by providing an explicit
combinatorial interpretation of the coefficients of the Kerov polynomials.
It was formulated for the first time as a conjecture in June 2008 by Valentin
Féray and Piotr Śniady after some computer experiments concerning the
coefficient of $R_2^3$ in Kerov polynomials $K_7$ and $K_9$. The original
formulation of the conjecture was Theorem \ref{theo:main-reformulated}; the
form below was pointed out by Philippe Biane in a private communication.

\begin{theorem}[The main result]
\label{theo:main}
Let $k\geq 1$ and let $s_2,s_3,\dots$ be a sequence of non-negative integers
with only finitely
many non-zero elements. The coefficient of $R_2^{s_2} R_3^{s_3} \cdots $ in
the Kerov polynomial $K_{k}$ is equal to the number of triples
$(\sigma_1,\sigma_2,q)$ with the following properties:
\begin{enumerate}[label=(\alph*)]
 \item \label{enum:first-condition}
$\sigma_1,\sigma_2$ is a factorization of the
cycle; in other words $\sigma_1,\sigma_2\in \Sym(k)$ are such that $\sigma_1
\circ \sigma_2=(1,2,\dots,k)$;
 \item \label{enum:ilosc2} the number of cycles of $\sigma_2$ is equal to the
number of\/ factors in
the product $R_2^{s_2} R_3^{s_3} \cdots $; in other words
$|C(\sigma_2)|=s_2+s_3+\cdots$;
 \item \label{enum:boys-and-girls} the total number of cycles of $\sigma_1$ and
$\sigma_2$ is equal to the degree of the product
$R_2^{s_2} R_3^{s_3} \cdots $; in other words
$|C(\sigma_1)|+|C(\sigma_2)|=2 s_2+3 s_3+4 s_4+\cdots$;
 \item \label{enum:kolorowanie} $q:C(\sigma_2)\rightarrow \{2,3,\dots\}$ is a
coloring of the cycles of
$\sigma_2$ with a property that each color $i\in\{2,3,\dots\}$ is used exactly
$s_i$ times (informally, we can think that $q$ is a map which to cycles of
$C(\sigma_2)$ associates the factors in the product $R_2^{s_2} R_3^{s_3}
\cdots$);
% we require that for every
% color $i\in\{2,3,\dots\}$ there are exactly $s_i$ cycles of $\sigma_2$
% with color $i$;
 \item \label{enum:marriage} for every set $A\subset C(\sigma_2)$
which is
nontrivial (i.e., $A\neq\emptyset$ and $A\neq C(\sigma_2)$) there are more than
$\sum_{i\in A} \big( q(i)-1 \big) $ cycles of\/ $\sigma_1$ which intersect
$\bigcup A$.
\end{enumerate}
\end{theorem}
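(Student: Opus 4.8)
The plan is to evaluate the identity \eqref{eq:kerov-polynomial} on the family of \emph{multirectangular} generalized Young diagrams and to read off the coefficient of a single monomial $R_2^{s_2}R_3^{s_3}\cdots$ by a suitable specialization. The key input for the left-hand side is Stanley's character formula (reviewed in Section~\ref{sec:stanley}), which expresses $\Sigma_k$ evaluated on the diagram with multirectangular coordinates $(\p,\q)$ as a \emph{signed} sum over all factorizations $\sigma_1\circ\sigma_2=(1,2,\dots,k)$; each factorization contributes $\pm$ the number of pairs of colourings of $C(\sigma_1)$ and $C(\sigma_2)$ that are monotone along the intersection graph of $\sigma_1$ and $\sigma_2$, weighted by a product of the coordinates $p_i$ and $q_i$. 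For the right-hand side I would use the known expression of the free cumulants $R_j$ in these same coordinates, so that both sides become polynomials in $(\p,\q)$. Since multirectangular diagrams realize generic values of the (algebraically independent) free cumulants, matching the two polynomial expressions is equivalent to \eqref{eq:kerov-polynomial}; in particular the triples we count are precisely the factorizations of the cycle occurring in Stanley's formula, which is condition~\ref{enum:first-condition}.

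To isolate the coefficient of $R_2^{s_2}R_3^{s_3}\cdots$ I would use $m=s_2+s_3+\cdots$ rectangles, one per \emph{factor} of the monomial, and extract the relevant term of the resulting polynomial. The extraction is arranged so that each cycle of $\sigma_2$ is attached to exactly one factor, which forces $|C(\sigma_2)|$ to equal the number of factors (condition~\ref{enum:ilosc2}) and produces the colouring $q\colon C(\sigma_2)\to\{2,3,\dots\}$ recording which factor $R_i$ a given cycle is attached to (condition~\ref{enum:kolorowanie}). Matching the degree $2s_2+3s_3+\cdots$ of the monomial against the Euler-characteristic (genus) bookkeeping of the factorization—whose leading instance is the top term $R_{k+1}$ of \eqref{eq:expansion}—yields $|C(\sigma_1)|+|C(\sigma_2)|=2s_2+3s_3+\cdots$ (condition~\ref{enum:boys-and-girls}). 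Equivalently this reads $|C(\sigma_1)|=\sum_{c\in C(\sigma_2)}\bigl(q(c)-1\bigr)$, i.e.\ the cycles of $\sigma_1$ must be distributed among the cycles of $\sigma_2$ so that each cycle $c$ claims exactly $q(c)-1$ of them. As a consistency check these constraints already reproduce Theorems~\ref{theo:linear} and~\ref{theo:quadratic}: for $|C(\sigma_2)|=1$ one has $|C(\sigma_1)|=l-1$, and for $|C(\sigma_2)|=2$ one recovers the cycle counts of the quadratic case.

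The remaining and hardest point is condition~\ref{enum:marriage}. Controlling the leading behaviour of Stanley's formula under this specialization should produce a \emph{weak} Hall inequality: for the contribution not to vanish, the number of cycles of $\sigma_1$ meeting $\bigcup A$ must be at least $\sum_{i\in A}\bigl(q(i)-1\bigr)$ for every $A\subseteq C(\sigma_2)$, via the marriage theorem applied to the intersection bipartite graph between $C(\sigma_1)$ and $C(\sigma_2)$ (using $|C(\sigma_1)|=\sum_c(q(c)-1)$, equality automatically holds for $A=C(\sigma_2)$). To upgrade this weak inequality to the \emph{strict} one demanded by~\ref{enum:marriage} I would exploit that the $R_j$ are genuine \emph{cumulants}, so that extracting the coefficient of a product of free cumulants is a \emph{connected} operation governed by a M\"obius inversion over the partition lattice of $C(\sigma_2)$. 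Configurations for which Hall's condition holds with \emph{equality} on some nontrivial proper $A\subseteq C(\sigma_2)$ are exactly the reducible ones—they split along $A$ into a product of two independent subconfigurations—and are cancelled by this inversion, leaving precisely the triples for which the inequality is strict on every nontrivial $A$. This reduces~\ref{enum:marriage} for $|C(\sigma_2)|=2$ to the single condition of Theorem~\ref{theo:quadratic} and makes it vacuous for $|C(\sigma_2)|=1$, as it must.

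The main obstacle is therefore twofold. First, I must pin down the leading asymptotics of Stanley's formula under the multirectangular specialization \emph{precisely enough} that the Hall-type inequalities emerge as the exact condition for a nonzero leading coefficient, rather than merely a necessary one. Second—and this is where the genuine difficulty lies—I must carry out the cancellation/M\"obius step that transforms the weak Hall inequalities into the strict condition~\ref{enum:marriage}, i.e.\ prove that the reducible configurations cancel exactly and that nothing else does. I expect this step to require a carefully designed sign-reversing pairing of factorizations, and reconciling it with the equivalent reformulation of Theorem~\ref{theo:main-reformulated} should provide a useful independent check.
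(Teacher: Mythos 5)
Your overall framework---evaluating \eqref{eq:kerov-polynomial} on multirectangular diagrams, using the Stanley--F\'eray formula (Theorem \ref{theo:stanley-feray-old}) for the left-hand side, and extracting one monomial with one rectangle per factor of $R_2^{s_2}R_3^{s_3}\cdots$---is indeed the paper's starting point, and your derivation of conditions \ref{enum:first-condition}--\ref{enum:kolorowanie} is plausible. The genuine gap is exactly where you locate it yourself: the passage from the weak Hall inequality to the strict condition \ref{enum:marriage}. Your proposed mechanism rests on the claim that configurations where Hall's condition holds with \emph{equality} on some nontrivial $A\subset C(\sigma_2)$ ``are exactly the reducible ones---they split along $A$ into a product of two independent subconfigurations.'' This equivalence is false. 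If exactly $\sum_{i\in A}\bigl(q(i)-1\bigr)$ cycles of $\sigma_1$ meet $\bigcup A$, those cycles may perfectly well also meet cycles outside $A$, so the bipartite graph $\V^{\sigma_1,\sigma_2}$ need not disconnect and nothing ``splits.'' (What is true, and is used in the proof of Corollary \ref{coro:good_factorizations}, is that equality on $A$ \emph{and simultaneously} on its complement forces a disconnection, contradicting transitivity---a much weaker statement.) Consequently there is no naive sign-reversing pairing of ``reducible'' configurations to lean on, and the M\"obius-inversion step you defer is not a routine completion but the actual content of the theorem.

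The paper resolves this by running the argument in the opposite direction and at the level of sums rather than individual configurations. One starts from the count of triples, performs inclusion--exclusion not over single violating sets but over \emph{chains} $C_1\varsubsetneq\cdots\varsubsetneq C_l$ of sets violating \ref{enum:marriage}; each resulting term $\Bad_C$ is then identified, via Theorem \ref{theo:stanley-feray-old}, with an explicit sum of coefficients $[p_1\cdots p_r\, q_1^{k_1-1}\cdots q_r^{k_r-1}]\,\Sigma_k^{\p\times\q}$ subject to linear constraints on the exponents. The cancellation is then carried out by Euler-characteristic lemmas for families of sets closed under union/intersection (Lemma \ref{lem:zle_zbiory} and Corollaries \ref{coro:good_factorizations}, \ref{coro:good_numbers}, \ref{coro:good_numbers_and_partitions}), and the result is matched to $\partial^r K_k/\partial R_{n_1}\cdots\partial R_{n_r}$ through Theorem \ref{theo:stanley-and-S}, formula \eqref{eq:s-r}, and Corollary \ref{coro:jak-policzyc-pochodne-po-R}. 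Note also a technical point your sketch skips entirely: converting $S$-derivatives into $R$-derivatives produces terms in which some exponents $k_i$ equal $1$, and handling these requires the separate identity of Lemma \ref{lem:tozsamosci-wielomianow-stanleya} (summation over partitions whose rightmost legs are the indices with $k_i\geq 2$). Without that ingredient, even the bookkeeping that leads to your conditions \ref{enum:boys-and-girls} and \ref{enum:kolorowanie} cannot be closed rigorously.
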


A careful reader may notice that condition \ref{enum:ilosc2} in the above
theorem is redundant since it is implied by condition \ref{enum:kolorowanie};
we decided to keep it for the sake of clarity. We postpone presenting 
interpretations of condition \ref{enum:marriage} until Section
\ref{subsec:condition-e} and Section \ref{subsec:condition-e-transportation}.

One can easily see that Theorem \ref{theo:linear} and Theorem
\ref{theo:quadratic} are special cases of the above result.
We decided to postpone the discussion of other applications of this main result
until Section \ref{subsec:applications} when more context will be available.

\subsection{Characters for more complicated conjugacy classes}
In order to study characters on more complicated conjugacy classes we will use
the following notation. For $k_1,\dots,k_l\geq 1$ we define
$$ \Sigma_{k_1,\dots,k_l}^\lambda= \Sigma^\lambda_{\pi},$$
where $\pi\in \Sym(k_1+\cdots+k_l)$ is any permutation with the lengths of the
cycles given by $k_1,\dots,k_l$; we may take for example
$\pi=(1,2,\dots,k_1)(k_1+1,k_1+2,\dots,k_1+k_2)\cdots$. For simplicity we will
often suppress the explicit dependence of $\Sigma_{k_1,\dots,k_l}$ on $\lambda$.

Unfortunately, as it was pointed out by Rattan and Śniady
\cite{Rattan'Sniady2008}, Kerov conjecture is not true for more
complicated Kerov polynomials $K_{\pi}$ for which $\pi$ consists of more than
one cycle. However, they conjectured that it would still hold true if the
definition \eqref{eq:kerov-polynomial} of Kerov polynomials was modified as
follows. 

For $k_1,\dots,k_l\geq 1$ we consider \emph{cumulant
$\kappa^{\id}(\Sigma_{k_1},\dots,\Sigma_{k_l})$ of the conjugacy classes of
cycles}. Precise definition of these quantities can be found in
\cite{'Sniady2006c}, for the purpose of this article it is enough to know that
their relation to the characters $\Sigma_{k_1,\dots,k_l}$ is analogous to the
relation between classical cumulants of random variables and their moments, as
it can be seen on the following examples:
\begin{align*}
\Sigma_{r}     = & \kappa^{\id}(\Sigma_r),\\ 
\Sigma_{r,s}   = & \kappa^{\id}(\Sigma_r,\Sigma_s) +
                      \kappa^{\id}(\Sigma_r)\ \kappa^{\id}(\Sigma_s), \\ 
\Sigma_{r,s,t} = &  \kappa^{\id}(\Sigma_r, \Sigma_s, \Sigma_t) +
\kappa^{\id}(\Sigma_r) \kappa^{\id}(\Sigma_s, \Sigma_t) +
\kappa^{\id}(\Sigma_s) \kappa^{\id}(\Sigma_r, \Sigma_t) + \\ &
\kappa^{\id}(\Sigma_t) \kappa^{\id}(\Sigma_r, \Sigma_s) +
\kappa^{\id}(\Sigma_r) \kappa^{\id}(\Sigma_s) \kappa^{\id}(\Sigma_s),
\end{align*}
\begin{align*}
\kappa^{\id}(\Sigma_r) & = \Sigma_{r},\\ 
\kappa^{\id}(\Sigma_r,\Sigma_s) & = \Sigma_{r,s}-\Sigma_r\Sigma_s, \\ 
\kappa^{\id}(\Sigma_r, \Sigma_s, \Sigma_t)&  = \Sigma_{r,s,t} - \Sigma_r
\Sigma_{s,t} - \Sigma_s\Sigma_{r,t} - \Sigma_t \Sigma_{r,s} + 2\, \Sigma_r
\Sigma_s \Sigma_t.
\end{align*}
As it was pointed out in \cite{'Sniady2006c}, the above quantities
$\kappa^{\id}(\Sigma_r,\Sigma_s,\dots)$ are very useful in the study of
fluctuations of random Young diagrams; in fact they are even more fundamental
than the characters $\Sigma_{r,s,\dots}$ themselves.

\begin{conjecture}[Rattan, Śniady \cite{Rattan'Sniady2008}]
\label{conj:rattan-sniady}
For $k_1,\dots,k_l\geq 1$ there exists a universal polynomial
$K_{k_1,\dots,k_l}$ with non-negative integer coefficients, called
\emph{generalized Kerov polynomial}, such that
$$ (-1)^{l-1} \kappa^{\id}(\Sigma_{k_1},\dots,\Sigma_{k_l}) =
K_{k_1,\dots,k_l}(R_2,R_3,\dots). $$
The coefficients of this polynomials have some combinatorial interpretation.
\end{conjecture}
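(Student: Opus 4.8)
The existence half of Conjecture~\ref{conj:rattan-sniady} costs essentially nothing: since \eqref{eq:kerov-polynomial} makes every $\Sigma_\pi$ a polynomial in the free cumulants, and $\kappa^{\id}(\Sigma_{k_1},\dots,\Sigma_{k_l})$ is a fixed integer combination of products of such characters, the quantity $(-1)^{l-1}\kappa^{\id}(\Sigma_{k_1},\dots,\Sigma_{k_l})$ is automatically a universal polynomial $K_{k_1,\dots,k_l}$ in $R_2,R_3,\dots$ with integer coefficients. The real content is the non-negativity together with an explicit combinatorial meaning, and the plan is to obtain both by isolating the \emph{connected} part of the factorization count of Theorem~\ref{theo:main} through the cumulant operation.

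First I would generalize Theorem~\ref{theo:main} from the single cycle $(1,2,\dots,k)$ to an arbitrary permutation $\pi\in\Sym(n)$ whose cycle lengths are $k_1,\dots,k_l$. Running the same machinery that proves Theorem~\ref{theo:main}, which never really uses that $\pi$ has a single cycle, I expect the coefficient of $R_2^{s_2}R_3^{s_3}\cdots$ in $K_\pi$ to be a \emph{signed} count
\[
\sum_{(\sigma_1,\sigma_2,q)} (-1)^{\,l-c(\sigma_1,\sigma_2)},
\]
where $(\sigma_1,\sigma_2,q)$ ranges over the triples satisfying the verbatim analogues of conditions \ref{enum:first-condition}--\ref{enum:marriage} with $(1,2,\dots,k)$ replaced by $\pi$, and where $c(\sigma_1,\sigma_2)$ is the number of orbits of $\langle\sigma_1,\sigma_2\rangle=\langle\sigma_1,\pi\rangle$ on $\{1,\dots,n\}$. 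For $l=1$ every factorization of a single cycle is automatically transitive, so $c\equiv 1$, the sign is always $+1$, and one recovers Theorem~\ref{theo:main} exactly; the appearance of signs for $l\geq 2$ is moreover forced by the known failure of Kerov positivity for $\Sigma_\pi$ when $\pi$ has several cycles.

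The second step reads this orbit structure as a moment--cumulant relation. Each orbit of $\langle\sigma_1,\pi\rangle$ is a union of cycles of $\pi$, so a factorization induces a set partition of the $l$ cycles; grouping the triples by this partition makes the count multiplicative over the blocks, since conditions \ref{enum:ilosc2}--\ref{enum:kolorowanie} are additive and condition \ref{enum:marriage} will be seen to localize to single orbits (this is the delicate point, treated below). Writing $T(B)\geq 0$ for the \emph{transitive} count attached to the sub-permutation $\pi_B$ carried by a block $B$, the signs combine as $\prod_{D\in\Pi}(-1)^{|D|-1}=(-1)^{|B|-|\Pi|}$, so that $\Sigma_{\pi_B}=\sum_{\Pi}\prod_{D\in\Pi}\big[(-1)^{|D|-1}T(D)\big]$, the sum running over set partitions $\Pi$ of $B$. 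This is precisely the expansion of a joint moment in terms of the cumulants $(-1)^{|D|-1}T(D)$, so uniqueness of cumulants yields $\kappa^{\id}(\Sigma_{k_1},\dots,\Sigma_{k_l})=(-1)^{l-1}T(\{1,\dots,l\})$, whence $(-1)^{l-1}\kappa^{\id}(\Sigma_{k_1},\dots,\Sigma_{k_l})=T(\{1,\dots,l\})$ is a genuine non-negative integer counting the transitive triples. This furnishes the explicit interpretation: $K_{k_1,\dots,k_l}$ counts the triples $(\sigma_1,\sigma_2,q)$ of Theorem~\ref{theo:main} for $\pi$, subject in addition to $\langle\sigma_1,\sigma_2\rangle$ acting transitively on $\{1,\dots,n\}$.

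The hardest point is the first step, and inside it the handling of the marriage condition \ref{enum:marriage}. I must check that condition \ref{enum:marriage} for $\pi$ is equivalent to its holding inside each orbit separately: for a set $A$ of cycles of $\sigma_2$ contained in one orbit this is immediate since no cycle of $\sigma_1$ crosses an orbit, but for $A$ meeting several orbits one has to show that the Hall-type strict inequality decomposes correctly across the disjoint orbits and, in particular, that restricting to transitive factorizations does not secretly impose an extra boundary inequality for $A$ equal to the full cycle set of a block. Establishing the signed formula of the first step with conditions \ref{enum:first-condition}--\ref{enum:marriage} intact, and verifying this localization of condition \ref{enum:marriage}, is where essentially all the work lies; once that is in place, the cumulant bookkeeping of the second step is purely formal.
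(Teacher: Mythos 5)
Your existence-and-integrality remark and your step-2 cumulant bookkeeping are fine, but the formula on which everything rests --- step 1 --- is false as stated, and the failure is exactly at the point you deferred as a verification. Impose conditions \ref{enum:first-condition}--\ref{enum:marriage} \emph{verbatim} on a factorization $\sigma_1\sigma_2=\pi$ of a permutation with $l\geq 2$ cycles, and sum the strict Hall inequalities of \ref{enum:marriage} over the sets $A_m$ of $\sigma_2$-cycles lying in the $c$ orbits of $\langle\sigma_1,\sigma_2\rangle$ (each $A_m$ is a nontrivial subset when $c\geq 2$, and cycles of $\sigma_1$ never cross orbits): this gives $|C(\sigma_1)|\geq c+\sum_i\big(q(i)-1\big)=c+|C(\sigma_1)|$ by \ref{enum:boys-and-girls}, a contradiction. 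So the verbatim conditions silently kill \emph{every} non-transitive triple, your signed sum $\sum(-1)^{l-c}$ degenerates to $(-1)^{l-1}T$, and the claimed formula is wrong: for $\pi=\id\in\Sym(2)$ one has $\Sigma_{1,1}=R_2^2-R_2$, whose $R_2^2$-coefficient is $1$, while your count gives $0$ (the unique candidate $\sigma_1=\sigma_2=\id$, $q\equiv 2$ violates \ref{enum:marriage} for $A$ a single fixed point). The correct statement must impose \ref{enum:boys-and-girls} and \ref{enum:marriage} \emph{separately inside each orbit}, with equality (not strict inequality) being what holds when $A$ exhausts the $\sigma_2$-cycles of an orbit; this is not a boundary case to ``check'' but a different set of conditions, without which the moment--cumulant grouping of your step 2 has nothing to act on.

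Relatedly, your premise that the proof of Theorem \ref{theo:main} ``never really uses that $\pi$ has a single cycle'' is incorrect: Corollary \ref{coro:good_factorizations}, the Euler-characteristic detector at the heart of the proof, uses transitivity of $\langle\sigma_1,\sigma_2\rangle$ essentially (it is what excludes the case $f(A)=f(B)=0$ with $A\sqcup B=C(\sigma_2)$ in verifying condition \eqref{eq:bad-family}), and transitivity is automatic only in the one-cycle case. The paper restores this hypothesis for free instead of fighting it: by F\'eray's formula (Theorem \ref{theo:stanley-feray-for-cumulants}), the Stanley polynomial of $\kappa^{\id}(\Sigma_{k_1},\dots,\Sigma_{k_l})$ is the Stanley--F\'eray sum restricted to \emph{transitive} factorizations of $\pi$ --- that is, the orbit/moment--cumulant decomposition is performed at the level of Stanley polynomials, before any marriage condition enters, where it is purely formal --- and then the proof of Theorem \ref{theo:main-reformulated} applies essentially word for word, yielding Theorem \ref{theo:rattan-sniady-true}. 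Your plan performs the same decomposition \emph{after} coefficient extraction, which is precisely what creates the localization problem above; repairing it would force you to decompose the Stanley--F\'eray sum over orbit partitions and run the transitive-case machinery on each block anyway, i.e.\ to prove Theorem \ref{theo:rattan-sniady-true} as the core of your step 1 rather than deduce it in step 2.
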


The existence of such a universal polynomial with integer coefficients follows
directly from the work of Kerov. The positivity of the coefficients was proved
by Féray \cite{F'eray-preprint2008} but his combinatorial interpretation of
the coefficients was not very explicit.

In this article will will also prove the following generalization of
Theorem \ref{theo:main} which gives an explicit combinatorial solution to
Conjecture \ref{conj:rattan-sniady}.
\begin{theorem}
\label{theo:rattan-sniady-true}
Let $k_1,\dots,k_l\geq 1$ and let $s_1,s_2,\dots$ be a sequence of non-negative
integers with only finitely many non-zero elements. The coefficient of
$R_2^{s_2} R_3^{s_3} \cdots $ in the generalized Kerov polynomial
$K_{k_1,\dots,k_l}$ is equal to the number of triples
$(\sigma_1,\sigma_2,q)$ which fulfill the same conditions as in Theorem
\ref{theo:main} with the following modification: condition
\ref{enum:first-condition} should be replaced by the following one:
\begin{enumerate}[label=(\alph*')]
 \item $\sigma_1,\sigma_2\in \Sym(k_1+\cdots+k_l)$ are such that
$$\sigma_1\circ \sigma_2=(1,2,\dots,k_1)(k_1+1,k_1+2,\dots,k_1+k_2)\cdots$$ and
the group $\langle \sigma_1,\sigma_2\rangle$ acts transitively on the set
$\{1,\dots,k_1+\cdots+k_l\}$.
\end{enumerate}

\end{theorem}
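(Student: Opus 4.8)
The plan is to prove Theorem~\ref{theo:rattan-sniady-true} by rerunning the proof of Theorem~\ref{theo:main} for the disconnected target permutation $\pi=(1,\dots,k_1)(k_1+1,\dots,k_1+k_2)\cdots$, and to control the passage from the moment $\Sigma_{k_1,\dots,k_l}=\Sigma_\pi$ to the cumulant $\kappa^{\id}(\Sigma_{k_1},\dots,\Sigma_{k_l})$ by a cluster (exponential-formula) argument. I would start from Stanley's signed expansion of $\Sigma_\pi$ evaluated on a multirectangular Young diagram $\p\times\q$ (Section~\ref{sec:stanley}): this is a sum over all factorizations $\sigma_1\circ\sigma_2=\pi$ with signs, whose signs are resolved into a positive count by a marriage/transportation (Hall-type) argument, condition~\ref{enum:marriage} being exactly the criterion for a term to survive the cancellation. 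For a single cycle every factorization is automatically transitive, since the full cycle already acts transitively and hence so does $\langle\sigma_1,\sigma_2\rangle$; this is why Theorem~\ref{theo:main} needs no connectedness hypothesis. The whole point of the generalization is to isolate, inside the same machinery, the transitive factorizations, and this is precisely what the cumulant does.

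The first key step is therefore to show that, at the level of Stanley's signed formula, the cumulant $\kappa^{\id}(\Sigma_{k_1},\dots,\Sigma_{k_l})$ is the signed sum restricted to factorizations with $\langle\sigma_1,\sigma_2\rangle$ transitive. The weight attached to a factorization in Stanley's formula is multiplicative over the orbits of $\langle\sigma_1,\sigma_2\rangle$, and since each cycle of $\pi$ lies entirely inside a single orbit, the orbits induce a set partition $\rho$ of $\{1,\dots,l\}$; the monomial data (conditions \ref{enum:ilosc2}, \ref{enum:boys-and-girls}, \ref{enum:kolorowanie}) is moreover additive over these orbits, so the grouping respects the $R$-monomial being extracted. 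Organizing the signed sum for $\Sigma_\pi$ according to $\rho$ then expresses it as $\sum_{\rho}\prod_{B\in\rho}T_B$, where $T_B$ is the signed sum over transitive factorizations of the sub-product indexed by $B$. This is exactly the exponential/cluster structure underlying the moment--cumulant relation $\Sigma_{(k_1,\dots,k_l)}=\sum_{\rho}\prod_{B}\kappa^{\id}\big((\Sigma_{k_i})_{i\in B}\big)$; matching the two decompositions term by term over the partition lattice, by induction on $l$, identifies $\kappa^{\id}\big((\Sigma_{k_i})_{i\in B}\big)$ with $T_B$ for every block $B$. This connectedness-versus-cumulant principle is the one underlying Śniady's framework \cite{'Sniady2006c}, which I would invoke here.

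Once the cumulant is written as the transitive signed sum $T_{[l]}$, the final step is to apply the marriage/transportation argument of Theorem~\ref{theo:main} verbatim, now on transitive factorizations, converting $(-1)^{l-1}T_{[l]}=K_{k_1,\dots,k_l}(R_2,R_3,\dots)$ into the positive count of triples satisfying \ref{enum:ilosc2}--\ref{enum:marriage} together with the transitivity clause (a$'$). I expect the main obstacle to be the compatibility of this sign-resolution with the transitivity constraint: the cancelling involution (or Hall matching) of Theorem~\ref{theo:main} must be shown to act within the class of transitive factorizations, i.e.\ to preserve the orbit partition $\rho$, so that restricting to transitive terms and resolving the signs commute. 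Since that argument is local (it modifies $\sigma_1,\sigma_2$ without merging or splitting orbits) this should go through, but it is exactly here that the single-cycle proof must be re-examined rather than quoted. A secondary technical point is bookkeeping of the sign $(-1)^{l-1}$ through the genus/Euler relation between the number of cycles, the number of points, and the number of orbits, so that the surviving count is genuinely non-negative, as demanded by Conjecture~\ref{conj:rattan-sniady}.
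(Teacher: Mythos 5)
Your proposal is correct and takes essentially the same route as the paper: the paper's own (one-line) proof of Theorem \ref{theo:rattan-sniady-true} is to rerun the proof of Theorem \ref{theo:main-reformulated} with the Stanley--Féray formula (Theorem \ref{theo:stanley-feray-old}) replaced by its cumulant version (Theorem \ref{theo:stanley-feray-for-cumulants}), and the identity you establish by the cluster/partition-lattice argument --- that $\kappa^{\id}(\Sigma_{k_1},\dots,\Sigma_{k_l})$ equals the signed Stanley sum restricted to transitive factorizations --- is exactly the content of that theorem, which the paper quotes from Féray rather than re-proving. The compatibility issue you flag does not arise in the paper's actual mechanism, which is not a cancelling involution but inclusion--exclusion on coefficients of Stanley polynomials; transitivity enters only as the hypothesis of Corollary \ref{coro:good_factorizations}, and it is supplied automatically by the restriction built into the cumulant formula.
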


\subsection{Idea of the proof: Stanley polynomials}
The main idea of the proof of the main result (Theorem \ref{theo:main} and
Theorem \ref{theo:rattan-sniady-true}) is to use Stanley polynomials which are
defined as follows. For two finite sequences of positive real numbers
$\p=(p_1,\dots,p_m)$
and $\q=(q_1,\dots,q_m)$ with $q_1\geq \cdots \geq q_m$ we consider a
multirectangular generalized Young diagram $\p\times \q$,
cf~Figure \ref{fig:multirectangular}. In the case when
$p_1,\dots,p_m,q_1,\dots,q_m$ are natural numbers $\p\times\q$ is a partition
% \begin{equation} 
% \label{eq:multirectangular}
$$\mathbf{p}\times \mathbf{q} = (\underbrace{q_1,\dots,q_1}_{p_1 \text{ times}},
\underbrace{q_2,\dots,q_2}_{p_2 \text{ times}},\dots).  $$ 
% \end{equation}

\begin{figure}[t]
\includegraphics{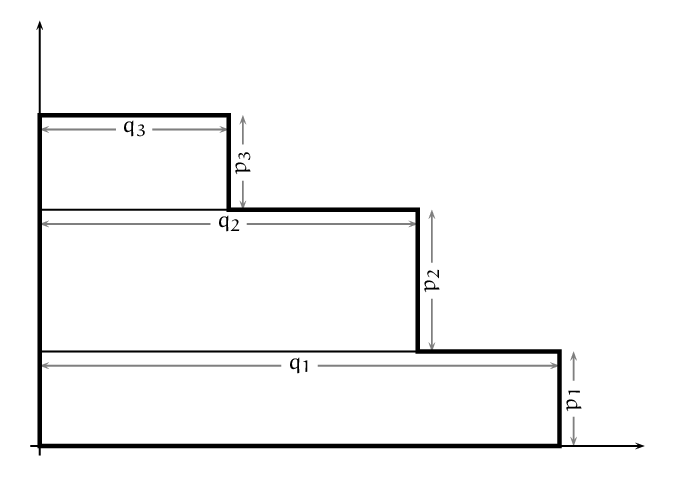}
\caption{Generalized Young diagram $\p\times \q$ drawn in the French convention}
\label{fig:multirectangular}
\end{figure}

If $\F:\Y\rightarrow\R$ is a sufficiently nice function on the set of
generalized Young diagrams (in this article we use the class of, so called,
\emph{polynomial functions}) then $\F(\p\times\q)$ turns out to be a polynomial
in indeterminates $p_1,p_2,\dots,q_1,q_2,\dots$ which will be called
\emph{Stanley polynomial}. The Stanley polynomial for the most interesting
functions $\F$, namely for the normalized characters $\Sigma_{\pi}$, is provided
by Stanley-Féray character formula (Theorem \ref{theo:stanley-feray-old})
which was conjectured by Stanley \cite{Stanley-preprint2006} and proved by
Féray \cite{F'eray-preprint2006}, for a more elementary proof we refer to
\cite{F'eray'Sniady-preprint2007}.

In the past analysis of some special coefficients of Stanley polynomials
resulted in partial results concerning Kerov polynomials
\cite{Stanley2002,Stanley2003/04}. In Theorem \ref{theo:stanley-and-S} we will
show that, in fact, a large class coefficients of Stanley polynomials can be
interpreted as coefficients
$$ \left. \frac{\partial}{\partial S_{k_1}} \cdots
\frac{\partial}{\partial S_{k_l}} \F \right|_{S_2=S_3=\cdots=0} $$
in the Taylor expansion of $\F$ into the \emph{basic functionals
$S_2,S_3,\dots$ of shape} of a Young diagram. 

These basic functionals $S_2,S_3,\dots$ of shape are not new; they already
appeared (possibly with a slightly modified normalization) in the work of
Ivanov and Olshanski \cite{IvanovOlshanski2002} and implicitly in the work of
Kerov \cite{Kerov1998,Kerov1999,Kerov2003}.

In this way we are able to express $\Sigma_{\pi}$ as an explicit polynomial in
$S_2,S_3,\dots$. In Proposition \ref{prop:relation-between-r-and-s} we will
show how to express $S_2,S_3,\dots$ in terms of free cumulants $R_2,R_3,\dots$.
Finally, we use some identities fulfilled by Stanley polynomials (Lemma
\ref{lem:tozsamosci-wielomianow-stanleya}) in order to express the coefficients
of Kerov polynomials in a useful way.

% \section{Alternative interpretations of condition \ref{enum:marriage}}
% \label{sec:interpretations}

\subsection{Combinatorial interpretation of condition \ref{enum:marriage}} 
\label{subsec:condition-e}
Let $(\sigma_1,\sigma_2,q)$ be a triple which fulfills conditions
\ref{enum:first-condition}--\ref{enum:kolorowanie} of Theorem \ref{theo:main}.
We consider the following polyandrous interpretation of Hall marriage theorem.
Each cycle of $\sigma_1$ will be called a \emph{boy} and each cycle of
$\sigma_2$ will be called a \emph{girl}. For each girl $j\in C(\sigma_2)$ let
$q(j)-1$ be the \emph{desired number of husbands of $j$} (notice that condition
\ref{enum:boys-and-girls} shows that the number of boys in $C(\sigma_1)$ is
right so that if no other restrictions were imposed it would be possible to
arrange marriages in such a way that each boy is married to exactly one girl
and each girl has the desired number of husbands). We say that a boy $i\in
C(\sigma_1)$ is a possible candidate for a husband for a girl $j\in C(\sigma_2)$
if cycles $i$ and $j$ intersect. Hall marriage theorem applied to our setup says
that there exists an arrangement of marriages $\M:C(\sigma_1)\rightarrow
C(\sigma_2)$ which assigns to each boy his wife (so that each girl $j$ has
exactly $q(j)-1$ husbands) if and only if 
% \begin{enumerate}[label=(e')]
% \item \label{enum:marriage-prim} 
for every set $A\subseteq C(\sigma_2)$ there are
at least $\sum_{i\in A} \big( q(i)-1 \big) $ cycles of\/ $\sigma_1$ which
intersect $\bigcup A$.
% \end{enumerate}
As one easily see, the above condition is similar but not identical to
\ref{enum:marriage}.

\begin{proposition}
Condition \ref{enum:marriage} is equivalent
to the following one:
\begin{enumerate}[label=(\ee)]
\item \label{enum:marriage-prim} 
for every nontrivial set of girls $A\subset C(\sigma_2)$ (i.e.,
$A\neq\emptyset$ and $A\neq C(\sigma_2)$) there exist two ways of arranging
marriages $\M_p:C(\sigma_1)\rightarrow C(\sigma_2)$, $p\in\{1,2\}$ for
which the corresponding sets of husbands of wives from $A$ are different:
$$ \M_1^{-1}(A)\neq \M_2^{-1}(A). $$
\end{enumerate}
\end{proposition}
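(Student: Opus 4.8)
The plan is to derive both implications from the polyandrous Hall theorem quoted above, after recording two elementary bookkeeping facts. For a set $A\subseteq C(\sigma_2)$ write $\Gamma(A)\subseteq C(\sigma_1)$ for the set of boys that intersect $\bigcup A$, put $N(A):=|\Gamma(A)|$ (the quantity appearing in condition \ref{enum:marriage}), and let $d(A):=\sum_{i\in A}\big(q(i)-1\big)$ be the total number of husbands demanded by the girls in $A$. The first fact is that \emph{every} arrangement of marriages $\M$ satisfies $|\M^{-1}(A)|=d(A)$ (each girl $j$ receives exactly $q(j)-1$ husbands) and $\M^{-1}(A)\subseteq\Gamma(A)$ (a husband of a girl in $A$ must be a candidate for her). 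The second fact is that every boy intersects at least one girl, since the supports of the cycles of $\sigma_2$ cover $\{1,\dots,k\}$; combined with condition \ref{enum:boys-and-girls} this gives $N(C(\sigma_2))=|C(\sigma_1)|=d(C(\sigma_2))$.

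First I would prove \ref{enum:marriage}$\Rightarrow$\ref{enum:marriage-prim}. Assuming $N(A)>d(A)$ for all nontrivial $A$, Hall's hypothesis $N\ge d$ holds for the empty set trivially, for the full set by the second fact, and for nontrivial sets by assumption, so at least one marriage exists. Fix a nontrivial $A$ and set $A^{c}=C(\sigma_2)\setminus A$. If no boy intersected both $A$ and $A^{c}$, then every boy in $\Gamma(A)$ would be forced into $\M^{-1}(A)$, whence $N(A)=|\Gamma(A)|\le|\M^{-1}(A)|=d(A)$, contradicting the strict inequality; hence there is a ``flexible'' boy $b$ together with girls $g\in A$ and $g'\in A^{c}$ each intersecting $b$. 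It then suffices to produce one marriage $\M_1$ with $b$ married to $g$ and another marriage $\M_2$ with $b$ married to $g'$, for then $b\in\M_1^{-1}(A)\setminus\M_2^{-1}(A)$.

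The crux is the construction of a marriage containing a prescribed edge. To marry $b$ to a fixed girl $h\in\{g,g'\}$, I would delete $b$, lower the quota of $h$ by one, apply Hall to the reduced system, and reinstate the edge $b\to h$. The only nontrivial point is to verify Hall's condition for the reduced system, namely $N(A')-[\,b\in\Gamma(A')\,]\ge d(A')-[\,h\in A'\,]$ for every $A'\subseteq C(\sigma_2)$ (here $[\,\cdot\,]$ is the indicator of the bracketed statement). This is the single place where the \emph{strict} inequality of \ref{enum:marriage} is genuinely used: for nontrivial $A'$ it gives $N(A')\ge d(A')+1$, which absorbs the deleted candidate $b$, while the empty and full sets are settled by the two bookkeeping facts. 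I expect this verification of Hall's condition for the two reduced systems to be the main, and only mildly technical, obstacle.

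Finally, the converse \ref{enum:marriage-prim}$\Rightarrow$\ref{enum:marriage} is easy, and I would argue by contraposition. If \ref{enum:marriage} fails there is a nontrivial $A_0$ with $N(A_0)\le d(A_0)$. If no marriage exists at all, then \ref{enum:marriage-prim} fails outright. Otherwise Hall forces $N(A_0)\ge d(A_0)$, hence $N(A_0)=d(A_0)$; then $\M^{-1}(A_0)\subseteq\Gamma(A_0)$ has cardinality $d(A_0)=|\Gamma(A_0)|$, so $\M^{-1}(A_0)=\Gamma(A_0)$ is the \emph{same} set for every marriage, and condition \ref{enum:marriage-prim} cannot hold for $A_0$. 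This closes the equivalence.
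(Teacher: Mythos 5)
Your proof is correct, and it does more than reproduce the paper's argument. Both proofs run on a double application of Hall's theorem, and your contrapositive treatment of the easy direction (the equality case forces the husbands of $A_0$ to be exactly the boys intersecting $\bigcup A_0$, in \emph{every} marriage) is precisely the observation the paper dismisses as ``immediate.'' The real difference is in the hard direction. The paper obtains $\M_1$ from Hall, selects \emph{any} boy $i\in\M_1^{-1}(A)$, forbids him from the girls of $A$, and applies Hall a second time; you instead first produce a boy $b$ adjacent to both $A$ and its complement, and then build two marriages containing the prescribed edges $b\to g\in A$ and $b\to g'\notin A$. Your extra care is not pedantry but necessary: the paper's step can fail, because an arbitrarily chosen boy of $\M_1^{-1}(A)$ may intersect no girl outside $A$, and then the restricted system violates Hall's condition for the full set of girls and admits no solution at all. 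Concretely, take $\sigma_1=(1)(2,6,4)(3,5)$ and $\sigma_2=(1,4,3,6)(2,5)$, so that $\sigma_1\circ\sigma_2=(1,2,\dots,6)$, and let $q$ assign $3$ to the girl $\{1,3,4,6\}$ and $2$ to the girl $\{2,5\}$. This triple satisfies conditions \ref{enum:first-condition}--\ref{enum:marriage} of Theorem \ref{theo:main} (it is one of the $35$ triples counted by the coefficient of $R_3R_2$ in $K_6$), yet for $A=\bigl\{\{1,3,4,6\}\bigr\}$ the boy $\{1\}$ lies in $\M^{-1}(A)$ for every marriage $\M$ and has no candidate outside $A$, so the paper's recipe breaks if this boy is selected. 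Your flexible-boy construction avoids the trap; alternatively, the paper's argument is repaired by choosing $i\in\M_1^{-1}(A)$ with a neighbour outside $A$, which always exists because condition \ref{enum:marriage} applied to the nontrivial set $C(\sigma_2)\setminus A$ shows that the number of boys whose neighbours all lie in $A$ is strictly smaller than $\sum_{j\in A}\bigl(q(j)-1\bigr)=\bigl|\M_1^{-1}(A)\bigr|$. Finally, your verification of Hall's condition for the reduced systems (the strict inequality absorbs the deleted candidate on nontrivial sets, while your two bookkeeping facts settle the empty and the full set) is complete and correct, so the proof stands as written.
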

\begin{proof}
The implication \ref{enum:marriage-prim}$\implies$\ref{enum:marriage} is
immediate.

For the opposite implication Hall marriage theorem shows existence of $\M_1$.
Let us select any boy $i\in \M_1^{-1}(A)$ and let us declare that boy $i$ is 
not allowed to marry any girl from the set $A$. Applying Hall marriage theorem
for the second time shows existence of $\M_2$ with the required properties
which finishes the proof of equivalence.
\end{proof}

\label{subsec:defgraphs}
For permutations $\sigma_1,\sigma_2$ it is convenient to introduce a bipartite
graph $\V^{\sigma_1,\sigma_2}$ with the set of vertices $C(\sigma_1)\sqcup
C(\sigma_2)$ with edges connecting intersecting cycles
\cite{F'eray'Sniady-preprint2007}. The elements of $C(\sigma_1)$, respectively
$C(\sigma_2)$, will be referred to as white, respectively black, vertices. For
a bipartite graph with a vertex set $V$ we will denote by $V_\bullet$ the set
of black vertices. 
% ; for each $l \in [1;k]$, we draw an
% edge between vertices $i\in C(\sigma_1)$ and $j\in C(\sigma_2)$ corresponding
% to the cycles containing $l$: it is a multi-edge version of the graph
% introduced in \cite{F'eray'Sniady-preprint2007}. 

The following
result gives a strong restriction on the form of the factorizations which
contribute to Theorem \ref{theo:main} and we hope it will be useful in the
future investigations of Kerov polynomials. Notice that this kind of result
appears also in the work of Féray \cite{F'eray-preprint2008}.
\begin{proposition}
\label{prop:restriction-on-topology}
Suppose that $\sigma_1,\sigma_2\in\Sym(k)$ are such that in the graph
$\V^{\sigma_1,\sigma_2}$ there exists a disconnecting edge $e$ with a property
that each of the two connected components of the resulting truncated graph
$\V^{\sigma_1,\sigma_2}\setminus\{e\}$ contains at least one vertex from
$C(\sigma_2)$. Then triple $(\sigma_1,\sigma_2,q)$ never contributes to the
quantities described in Theorem \ref{theo:main} and Theorem
\ref{theo:rattan-sniady-true}, no matter how $q$ and $s_2,s_3,\dots$ are chosen.
\end{proposition}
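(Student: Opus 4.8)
The plan is to show that the presence of such a disconnecting edge forces condition \ref{enum:marriage} to fail; since every contributing triple must satisfy \ref{enum:marriage}, this gives the claim for both theorems at once. Write the disconnecting edge as $e=\{w,b\}$. Because $\V^{\sigma_1,\sigma_2}$ is bipartite, one endpoint is white, $w\in C(\sigma_1)$, and the other is black, $b\in C(\sigma_2)$. Removing $e$ splits the graph into two components; let $G_w$ be the one containing $w$ and $G_b$ the one containing $b$, and set
\[ A = C(\sigma_2)\cap G_b, \qquad A' = C(\sigma_2)\cap G_w. \]
By hypothesis each component contains a black vertex, so $A$ and $A'$ are both nonempty; since they partition $C(\sigma_2)$, each is nontrivial in the sense of \ref{enum:marriage}, and they are complementary.

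The core of the argument is a counting estimate. For a set $S$ of black vertices write $N(S)\subseteq C(\sigma_1)$ for the set of white vertices intersecting $\bigcup S$. First I would record that every white vertex has at least one black neighbour (take any element of the corresponding cycle of $\sigma_1$ and look at the cycle of $\sigma_2$ containing it), and dually for black vertices. Since $e$ is the unique edge joining $G_w$ and $G_b$, a white vertex adjacent to a black vertex of $A'\subseteq G_w$ must itself lie in $G_w$, so $N(A')\subseteq C(\sigma_1)\cap G_w$; likewise a white vertex adjacent to $A\subseteq G_b$ lies in $G_b$ unless it uses the crossing edge $e$, whence $N(A)\subseteq (C(\sigma_1)\cap G_b)\cup\{w\}$. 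Consequently $N(A)\cap N(A')\subseteq\{w\}$ while $N(A)\cup N(A')\subseteq C(\sigma_1)$, and inclusion--exclusion yields the key bound
\[ |N(A)| + |N(A')| \leq |C(\sigma_1)| + 1. \]

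Next I would combine conditions \ref{enum:boys-and-girls} and \ref{enum:kolorowanie} into the global identity $\sum_{i\in C(\sigma_2)}\bigl(q(i)-1\bigr) = |C(\sigma_1)|$ (namely $\sum_i q(i)=|C(\sigma_1)|+|C(\sigma_2)|$ by \ref{enum:boys-and-girls}, and then subtract $|C(\sigma_2)|$). Assume now, for contradiction, that the triple satisfies \ref{enum:marriage}. Applying the strict inequality to the two nontrivial sets $A$ and $A'$, and using that all the quantities are integers, gives $|N(A)|\geq \sum_{i\in A}(q(i)-1)+1$ and $|N(A')|\geq \sum_{i\in A'}(q(i)-1)+1$. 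Summing these and invoking the global identity produces $|N(A)|+|N(A')|\geq |C(\sigma_1)|+2$, which contradicts the bound of the previous paragraph. Hence \ref{enum:marriage} cannot hold, so the triple never contributes; as conditions \ref{enum:boys-and-girls}, \ref{enum:kolorowanie} and \ref{enum:marriage} are unchanged in Theorem \ref{theo:rattan-sniady-true}, the argument applies there verbatim.

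I expect the only delicate point to be the neighbourhood analysis of the second paragraph — in particular confirming that the bridge contributes exactly one unit of overlap, carried by the vertex $w$, and that no white vertex of either side is overlooked; this is precisely where the hypothesis that each side retains a black vertex (so that $A,A'\neq\emptyset$) enters. A perhaps more transparent alternative would be to argue through the marriage reformulation \ref{enum:marriage-prim}: every boy of $G_b$ can only marry a girl of $A$, and the single boy $w$ is the only other candidate for $A$, so in any valid arrangement the husbands of $A$ are forced to be all boys of $G_b$ together with $w$ whenever a slot remains. Thus $\M^{-1}(A)$ is uniquely determined, no two arrangements can differ on $A$, and \ref{enum:marriage-prim} fails.
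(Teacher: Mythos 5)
Your main argument is correct, and it takes a genuinely different route from the paper. The paper works with the reformulation \ref{enum:marriage-prim}: writing $V_1$ for the component containing the white endpoint $i$ of $e$, and $A$, $B$ for the girls and boys in $V_1$, it observes that any marriage arrangement must have $\M^{-1}(A)=B$ or $\M^{-1}(A)=B\setminus\{i\}$ (according to whether $|B|-\sum_{j\in A}(q(j)-1)$ is $0$ or $1$, no arrangement existing otherwise), so the husband set of $A$ is forced and \ref{enum:marriage-prim} fails. Your proof instead attacks \ref{enum:marriage} head-on: you apply the strict inequality to the two complementary nontrivial sets $A$ and $A'$, use \ref{enum:boys-and-girls} and \ref{enum:kolorowanie} to get $\sum_{c}(q(c)-1)=|C(\sigma_1)|$, and contradict the geometric bound $|N(A)|+|N(A')|\le |C(\sigma_1)|+1$ coming from the fact that the bridge is the unique edge between the two sides. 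This is self-contained pure counting: it never invokes Hall's theorem or the equivalence of \ref{enum:marriage} with \ref{enum:marriage-prim}, which is what the paper's argument rests on. What the paper's route buys in exchange is structural information -- it pins down exactly what the husband set of $A$ would have to be, a rigidity statement in the spirit of the forest decomposition of Section \ref{sect:compar} -- and your closing sketch via \ref{enum:marriage-prim} is essentially that proof. Two trivial remarks: the observation in your second paragraph that every white vertex has a black neighbour is never used; and the fact recorded in both proofs, that the hypothesis on the truncated graph forces $A$ and $A'$ to be nonempty and hence nontrivial, is exactly where the assumption that both components contain a black vertex enters, as you correctly flag.
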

\begin{proof}
Before starting the proof notice that the 
assumptions of Theorem \ref{theo:main} and Theorem
\ref{theo:rattan-sniady-true} show that in order for $(\sigma_1,\sigma_2,q)$ to
contribute, graph $\V^{\sigma_1,\sigma_2}$ must be connected.

Let $i\in C(\sigma_1)$, $j\in C(\sigma_2)$ be the endpoints of the edge $e$
and let $V_1,V_2$ be the connected components of
$\V^{\sigma_1,\sigma_2}\setminus\{e\}$; we may assume that $i\in V_1$ and $j\in
V_2$.
We are going to use the condition \ref{enum:marriage-prim}. Let $A$,
respectively $B$, be the set of girls, respectively boys, contained in $V_1$.
From the assumption it follows that $V_1\neq\{i\}$ therefore
$A\neq\emptyset$; on the other hand $j\in V_2$ therefore $A\neq
C(\sigma_2)$. 

If 
\begin{equation} 
\label{eq:roznica-chlopcy-dziewczynki}
|B|-\sum_{j\in A} \big( q(j)-1 \big)  
\end{equation}
does not belong to the set $\{0,1 \}$ then it is not possible to arrange the
marriages.

If \eqref{eq:roznica-chlopcy-dziewczynki} is equal to zero then any
arrangement of marriages $\M:C(\sigma_1)\rightarrow C(\sigma_2)$ must fulfill
$\M^{-1}(A)=B$; if \eqref{eq:roznica-chlopcy-dziewczynki} is equal to $1$
% the number of boys in $V_1$
% is equal to the disired number of husbands of the girls from $V_1$ 
then any
arrangement of marriages $\M:C(\sigma_1)\rightarrow C(\sigma_2)$ must fulfill
$\M^{-1}(A)=B\setminus\{i\}$. In both cases, the set of husbands of
wives from $A$ is uniquely determined therefore condition
\ref{enum:marriage-prim} does not hold.
\end{proof}

\subsection{Transportation interpretation of condition \ref{enum:marriage}} 
\label{subsec:condition-e-transportation}

Let $G$ be a bipartite graph and its set of black vertices be $V_\bullet$. For
any set $A\subseteq V_\bullet$ of black vertices we denote by $N_G(A)$ the set
of white vertices which have a neighbor in $A$. We can rephrase condition
\ref{enum:marriage} by:
\begin{enumerate}[label=(\eee)]
\item \label{enum:marriage-graphs} for any non-trivial subset $A$, $\quad
\left| N_{\V^{\sigma_1,\sigma_2}}(A) \right| \geq 1+\sum_{c \in A} \big[q(c)
-1\big]$.
\end{enumerate}

Let a coloring $q:V_\bullet\rightarrow\{2,3,\dots\}$ of the black vertices of 
a bipartite graph $G$ be given. We say that $G$ is $q$-admissible if for every
set $A\subseteq V_\bullet$ of black vertices $\left| N_A\right| \geq \sum_{c\in
A} \big[ q(c)-1
\big] $ and furthermore the equality holds if and only if $V_\bullet$ is equal
to the set of all black vertices in a union of some connected components of $G$.

Notice that if $G$ is connected then it is $q$-admissible if and only if it
satisfies condition \ref{enum:marriage-graphs}. 
% Also, if $G$ contains no loops
% then it is $q$-admissible if and only if it is a collection of trees of the
% following form: one black vertex $c\in V_\bullet$ connected to $q(c)-1$
% leaves.

\begin{proposition}
\label{prop:existence-of-solution}
Condition \ref{enum:marriage} is equivalent to the following one:
\begin{enumerate}[label=(\eeee)]
\item \label{enum:transportation}
there exists a strictly positive solution to the following system of equations: 
\begin{description}
 \item[Set of variables]\ \\ $\big\{x_{i,j}: \text{ white vertex $i$ is
connected to black vertex $j$} \big\}$
 \item[Equations] $\left\{\begin{array}{l} \forall i, \sum_j x_{i,j}=1 \\
\forall j, \sum_i x_{i,j}=q(j)-1 \end{array}\right.$
\end{description}
\end{enumerate}

More generally, graph $G$ is $q$-admissible if and only if condition
\ref{enum:transportation} is fulfilled.
\end{proposition}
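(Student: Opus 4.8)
The plan is to read condition \ref{enum:transportation} as a bipartite \emph{transportation problem}: regard each white vertex as a source of one unit and each black vertex $j$ as a sink demanding $q(j)-1$ units, allow shipments only along the edges of $G$, and ask for a feasible shipment that is \emph{strictly positive on every edge}. The first assertion (condition \ref{enum:marriage} $\Leftrightarrow$ \ref{enum:transportation}) is the special case of the general one for a \emph{connected} graph $G$: for connected $G$ the property of being $q$-admissible coincides with condition \ref{enum:marriage-graphs}, which is a verbatim restatement of condition \ref{enum:marriage}, as already observed. Hence I would prove only the general equivalence ``$G$ is $q$-admissible $\Leftrightarrow$ \ref{enum:transportation}''. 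Throughout I may assume $G$ has no isolated white vertex: such a vertex makes the constraint $\sum_j x_{i,j}=1$ unsatisfiable, so \ref{enum:transportation} fails, and it does not occur for the graphs appearing in Theorem \ref{theo:main}.

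For the direction \ref{enum:transportation}$\Rightarrow q$-admissible, I would start from a strictly positive solution $x$ and, for any $A\subseteq V_\bullet$, sum the sink equations over $A$; since every edge incident to $A$ has its white endpoint in $N(A)$,
\[
\sum_{c\in A}\big(q(c)-1\big)=\sum_{i\in N(A)}\ \sum_{c\in A}x_{i,c}\le \sum_{i\in N(A)}\ \sum_{j}x_{i,j}=|N(A)|,
\]
which is the required inequality. Equality forces $\sum_{c\in A}x_{i,c}=1$ for every $i\in N(A)$, i.e.\ each such white vertex ships its entire unit into $A$; by strict positivity this means $N(A)$ has no edge leaving $A$, so $A\cup N(A)$ is closed under adjacency and is therefore a union of connected components whose black part is exactly $A$. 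Conversely, if $A$ is the black part of a union of components then (there being no isolated white vertices) $N(A)$ consists of all whites of those components and a flow count gives equality. Thus equality holds precisely for the black parts of unions of components, which is exactly $q$-admissibility.

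For the converse I would use two ingredients. First, \emph{feasibility}: a nonnegative solution exists by the classical feasibility criterion for transportation problems (Gale's theorem, equivalently max-flow--min-cut), whose hypotheses — the inequalities $|N(A)|\ge\sum_{c\in A}(q(c)-1)$ for all $A$, together with the balance $\sum_{c\in V_\bullet}(q(c)-1)=|N(V_\bullet)|$, the latter being the number of white vertices — are exactly what $q$-admissibility provides (the balance being its equality case at $A=V_\bullet$). Second, an \emph{averaging} step: if for every edge $e$ there is a feasible solution with $x_e>0$, then the barycenter of these finitely many solutions is feasible and strictly positive on all edges at once; hence it suffices to show that no edge is forced to zero. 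The key lemma is that an edge $\{i,j\}$ is forced to zero in all feasible solutions if and only if there is a \emph{tight} set $A\subseteq V_\bullet$ (one with $|N(A)|=\sum_{c\in A}(q(c)-1)$) satisfying $j\notin A$ and $i\in N(A)$. I would prove it by attempting to push a small amount $\varepsilon>0$ of flow along $\{i,j\}$: the perturbed marginals (supply of $i$ and demand of $j$ each lowered by $\varepsilon$) remain feasible for small $\varepsilon$ exactly when Gale's one-sided criterion survives the perturbation, which fails precisely for such a tight $A$. Finally, under $q$-admissibility the only tight sets are black parts of unions of components; if such an $A$ contained a white neighbour $i$, then all black neighbours of $i$ — in particular $j$ — would lie in $A$, contradicting $j\notin A$. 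So no edge is forced to zero and \ref{enum:transportation} holds.

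The main obstacle is this reverse direction, and within it the forcing lemma: one must invoke the \emph{correct one-sided} feasibility criterion (neighbourhoods of black sets, not of white sets) and track how it degrades under the $\varepsilon$-perturbation, so that ``not forced to zero'' is matched exactly with ``no obstructing tight set''. The remaining care is bookkeeping — verifying that the tight sets are precisely the black parts of unions of components and disposing of the degenerate isolated-white case — together with the convexity observation that upgrades ``each edge is activatable separately'' to ``all edges are positive simultaneously''.
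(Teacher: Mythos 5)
Your proof is correct, and its skeleton matches the paper's: the forward direction (strictly positive solution implies $q$-admissibility) is verbatim the paper's argument — sum the demand equations over $A$, bound by $|N_G(A)|$, and in the equality case use strict positivity to conclude that no edge leaves $A\cup N_G(A)$, so $A$ is the black part of a union of components. The converse also shares the paper's key idea of averaging a finite family of feasible solutions, one witnessing positivity on each edge. Where you genuinely diverge is in how those edge-activating solutions are produced: the paper stays integral, averaging all $\{0,1\}$-solutions (i.e.\ marriage arrangements) and asserting in one sentence that $q$-admissibility lets one force any given $x_{i,j}=1$ and still complete a solution — an appeal to Hall's theorem in the polyandrous setup of Section \ref{subsec:condition-e}. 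You instead work fractionally: Gale's feasibility criterion, an $\varepsilon$-perturbation of the supply at $i$ and demand at $j$, and a forcing lemma identifying the obstruction as a tight set $A$ with $i\in N_G(A)$, $j\notin A$, which $q$-admissibility rules out since tight sets are black parts of unions of components. In effect your tight-set analysis is exactly the verification the paper leaves implicit (the same case analysis — strict inequality survives the perturbation by integrality, tightness requires $i\in N_G(A)\Rightarrow j\in A$ — would be needed to justify the paper's Hall-theorem step), so your version is longer but self-contained, while the paper's is shorter by leaning on the marriage machinery already set up. Two further points in your favor: you note that equality does hold for black parts of unions of components (needed for the ``if and only if'' in $q$-admissibility, and omitted in the paper), and you flag the degenerate case of an isolated white vertex, where $q$-admissibility can hold while the transportation problem is infeasible — a gap the paper silently ignores and which shows its ``more general'' claim needs that mild hypothesis.
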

% Let $q:V_\bullet\rightarrow\{2,3,\dots\}$ be given. We say that a bipartite
% graph $G$ with the set of black vertices $V_\bullet$ is admissible if for
% every
% set $A\subseteq V_\bullet$ of black vertices $N_A\geq \sum_{c\in A} \big[
% q(c)-1
% \big] $ and furthermore the equality holds if and only if $V_\bullet$ is equal
% to the set of all black vertices in a union of some connected components.
Before starting the proof note that the possibility of arranging marriages
(see Section \ref{subsec:condition-e}) can be rephrased as existence of a
solution to the above system of equations with a requirement that
$x_{i,j}\in\{0,1\}$. 

The system of equations in condition
\ref{enum:transportation} can be interpreted as a transportation problem where
each white vertex is interpreted as a factory which produces a unit of some ware
and each black vertex $j$ is interpreted as a consumer with a demand equal to
$q(j)-1$. The value of $x_{i,j}$ is interpreted as amount of ware transported
from factory $i$ to the consumer $j$.

\begin{proof}
Suppose that the above system has a positive solution. For any $A\subseteq
V_\bullet$ we have
$$\sum_{j \in A} (q(j) -1) = \sum_{j \in A} \sum_{\substack{i:\\ 
(i,j)\text{ is an edge}}} x_{i,j} = \sum_{i \in N_G(A)} \sum_{\substack{j \in
A:\\(i,j)
\text{ is an edge}}} x_{i,j} \leq |N_G(A)|.$$
Furthermore, if $|N_G(A)| =\sum_{j \in A} (q(j)-1)$ then the above inequality is
an
equality which means that for each $i \in N_G(A)$ one has
$$\sum_{\substack{j \in A:\\(i,j) \text{ is an edge}}} x_{i,j}=1.$$
As $\sum\limits_j x_{i,j} =1$ and $x_{i,j}>0$ if $(i,j)$ is an edge, this
implies that there is no edge $(i,j)$ with $i \in N_G(A)$ and $j \notin A$. In
this way we have proved that $A$ is the set of black vertices of a union of some
disjoint components, therefore $G$ is $q$-admissible.

The opposite implication is easy: we consider the mean of all solutions of the
system with the condition $x_{i,j} \in \{0,1\}$. This gives us a strictly
positive solution because the $q$-admissibility ensures that if we
force some variable $x_{i,j}$ to be equal to $1$ we can find a solution to the
system.
\end{proof}

% \section{Outlook}
% \label{sec:outlook}

\subsection{Open problems}

\subsubsection{$C$-expansion}

% \begin{multline*} \Sigma_{k}= R_{k+1} + \\ \frac{(k-1)k(k+1)}{24}
% \sum_{\substack{j_2,j_3,\dots\geq 0, \\  2 j_2+3 j_3+\cdots=k-1}}
% \frac{(j_2+j_3+\cdots)!}{j_2! j_3! \cdots} \prod_{i\geq 2} \big( (i-1)
% R_i)^{j_i}+ \\ \text{(terms of degree at most $k-3$)}.
% \end{multline*}
% 
% 

In analogy to \eqref{eq:free-cumulants-strange} we define for $k\geq 2$
\begin{equation}
\label{eq:definicja-ck-dziwna}
 C_k^{\lambda} =\frac{24}{k(k+1)(k+2)} \lim_{s\to\infty}
\frac{1}{s^k} 
\left( \Sigma_{k+1}^{s \lambda}- R_{k+2}^{s\lambda} \right) 
\end{equation}
which (up to the unusual numerical factor in front) gives the leading terms of
the deviation from the first-order approximation 
$\Sigma_{k+1}^\lambda \approx R_{k+2}^\lambda$.
The explicit form of $C_k$ 
$$
C_k= 
% \frac{k(k+1)(k+2)}{24}
\sum_{\substack{j_2,j_3,\dots\geq 0, \\  2 j_2+3 j_3+\cdots=k}}
\frac{(j_2+j_3+\cdots)!}{j_2! j_3! \cdots} \prod_{i\geq 2} \big( (i-1)
R_i)^{j_i}
$$
as a polynomial in free cumulants $R_2,R_3,\dots$
was conjectured by Biane \cite{Biane2003} and was proved by Śniady
\cite{'Sniady2006a}. Goulden and Rattan \cite{GouldenRattan2007} proved that
for each $k\geq 1$ there exists a universal polynomial $L_k$ called
\emph{Goulden--Rattan polynomial} with rational coefficients such that
\begin{equation} 
\label{eq:rozwiniecie-c}
\Sigma_{k}-R_{k+1} = L_k(C_2,C_3,\dots) 
\end{equation}
and they found an explicit but complicated formula for $L_k$. A simpler
proof and some more related results can be found in the work of Biane
\cite{Biane2005/07}.
\begin{conjecture}[Goulden and Rattan \cite{GouldenRattan2007}]
\label{conj:goulden-rattan}
The coefficients of $L_k$ are \emph{non-negative} rational numbers with
relatively small denominators.
\end{conjecture}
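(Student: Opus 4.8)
The plan is to prove the non-negativity of the coefficients of $L_k$, which is the essential content; the ``relatively small denominators'' assertion I would obtain as a by-product of the same construction rather than attack separately. The first step is purely algebraic: to record the dictionary between the two families of cumulants. Writing $W(z)=\sum_{i\geq 2}(i-1)R_i z^i$ and $C(z)=\sum_{k\geq 2}C_k z^k$, the explicit polynomial formula for $C_k$ quoted above simplifies, once the multinomial theorem is applied to the coefficient $\frac{(j_2+j_3+\cdots)!}{j_2!\,j_3!\cdots}$, to
\[
C(z)=\sum_{m\geq 1}W(z)^m=\frac{W(z)}{1-W(z)},\qquad\text{equivalently}\qquad W(z)=\frac{C(z)}{1+C(z)}.
\]
This clean identity is the natural starting point, but it also exposes the central difficulty: the inversion $W=C-C^2+C^3-\cdots$ carries alternating signs, so substituting $R=R(C)$ directly into the Kerov polynomial $K_k$ and hoping the signs cancel is hopeless at the level of formal series. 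Indeed, already for $k=5$ one finds $\Sigma_5-R_6=15R_4+5R_2^2+8R_2$ and, after substitution, $L_5=5C_4-5C_2^2+5C_2^2+8C_2=5C_4+8C_2$: the positivity is the outcome of an exact cancellation of the $C_2^2$ terms, and it is precisely this cancellation that a combinatorial model must explain.

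The heart of the plan is to realise the identity $C=W/(1-W)$ inside the factorization model of Theorem \ref{theo:main}. I would first single out the triples $(\sigma_1,\sigma_2,q)$ that should account for the individual cumulants $C_k$. Proposition \ref{prop:restriction-on-topology} already shows that a triple contributes only when the bipartite graph $\V^{\sigma_1,\sigma_2}$ carries no disconnecting edge separating two black vertices; extrapolating this, I would call a contributing triple \emph{irreducible} when its black part admits no cut structure at all separating black vertices, and aim to prove that the irreducible triples are exactly those enumerated, with their $q$-colouring and the weights $(i-1)$, by $C_k$. The geometric series $\sum_{m\geq 1}W(z)^m$ then acquires a transparent meaning: an arbitrary contributing triple decomposes uniquely, along its cut structure, into a tree of irreducible blocks. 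Reading this block decomposition back through Theorem \ref{theo:main} should express the number of all contributing triples (after removing the single triple $\sigma_1=\mathrm{id}$ responsible for the leading term $R_{k+1}$) as a polynomial in the block-counts $C_2,C_3,\dots$, that is, as $L_k(C_2,C_3,\dots)$, each monomial being the cardinality of an honest family of glued configurations and hence carrying a non-negative coefficient.

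Within this picture the denominators acquire a concrete source. When a monomial $\prod_i C_i^{m_i}$ is assembled by gluing several irreducible blocks, permuting blocks of equal degree is a symmetry of the assembly, so a count of gluings must be divided by the order $\prod_i m_i!$ of the group permuting identical blocks. Since these multiplicities are bounded trivially in terms of $k$, the denominators appearing in $L_k$ would be controlled by such products of factorials, yielding the ``relatively small denominators'' half of the conjecture. Throughout, the generating-function identity of the first paragraph serves as a rigorous consistency check that the block decomposition reproduces the correct coefficients.

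The step I expect to be genuinely hard is proving that the block decomposition is compatible with the marriage condition \ref{enum:marriage}. That condition is global, quantifying simultaneously over all nontrivial sets $A\subseteq C(\sigma_2)$ of black vertices, whereas a block decomposition is local; one must therefore show that $q$-admissibility of the whole graph is equivalent to $q$-admissibility of each block together with the correct bookkeeping along the gluing edges. The transportation reformulation of Proposition \ref{prop:existence-of-solution} looks like the right instrument, since a strictly positive flow on the full graph ought to restrict to strictly positive flows on the blocks and, conversely, be reconstructible from them; making this flow-splitting precise, and checking that it is consistent with the weight $(i-1)$ attached to a black vertex whose cycle has length $i$, is the delicate point. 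A secondary difficulty is to fix the exact notion of irreducibility --- edge- versus vertex-$2$-connectivity of the black part --- for which the enumeration reproduces $C_k$ on the nose, including its precise numerical normalisation.
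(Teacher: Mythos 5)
The statement you set out to prove is not a theorem of this paper: it is Conjecture \ref{conj:goulden-rattan}, stated in the paper's ``Open problems'' section, attributed to Goulden and Rattan, and left open. The paper offers no proof of it --- it only expresses the hope that Theorem \ref{theo:main} ``seems to be a perfect tool'' for attacking it --- so there is no proof in the paper to compare yours against. The only question is whether your proposal itself constitutes a proof, and it does not. Your first paragraph is sound: the identity $C(z)=W(z)/\bigl(1-W(z)\bigr)$ does follow from the multinomial expansion of the explicit formula for $C_k$, and your sample computation $L_5=5C_4+8C_2$ is correct. But everything after that is a research program rather than an argument, as your own phrasing concedes (``I would call\dots'', ``aim to prove\dots'', ``The step I expect to be genuinely hard\dots'').

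Concretely, three essential steps are missing. First, you never define the class of ``irreducible'' triples; you acknowledge that you do not know whether edge- or vertex-connectivity of $\V^{\sigma_1,\sigma_2}$ is the right notion, and without a definition there is nothing to enumerate. Second, you never prove that the irreducible triples are counted by $C_k$ with the weights $(i-1)$; this is the heart of any such argument and is merely asserted as a goal. Third --- the step you yourself flag as hard --- you never show that condition \ref{enum:marriage}, which quantifies globally over all nontrivial $A\subset C(\sigma_2)$, is compatible with the proposed block decomposition; without that, the geometric series $\sum_{m\geq 1}W^m$ has no demonstrated relation to the count in Theorem \ref{theo:main}, and the passage from ``tree of irreducible blocks'' to ``non-negative coefficients of $L_k$'' is unsupported. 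Note also that the cancellation you exhibit in $L_5$ (the $\pm 5C_2^2$ terms) is exactly the phenomenon a proof must explain; your proposal records it as a consistency check but does not derive it. The denominator claim inherits all of these gaps, since the symmetry-group division you invoke presupposes the gluing construction that was never built. As it stands, the conjecture remains open, and your text is a reasonable plan of attack, not a proof.
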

It is natural to conjecture that the underlying reason for positivity of the 
coefficients is that they have (after some additional rescaling)  a
combinatorial interpretation.

In some sense the free cumulants $(R_k)$ are analogous to the above quantities
$(C_k)$:  both have natural interpretations as leading (respectively,
subleading) terms in the asymptotics of characters,
cf.~\eqref{eq:free-cumulants-strange}, respectively
\eqref{eq:definicja-ck-dziwna}. Also, Conjecture \ref{conj:Kerov} is analogous
to Conjecture \ref{conj:goulden-rattan}: both conjectures state that there are
exact formulas which express the characters $\Sigma_k$ (respectively, the
subdominant terms of the characters $\Sigma_k-R_{k+1}$) as polynomials in free
cumulants (respectively, $(C_k)_{k\geq 2}$) with non-negative integer
coefficients (respectively, non-negative rational coefficients with small
denominators) which have a combinatorial interpretation.

The advantage of the quantities $(C_k)$ over free cumulants $(R_k)$ is that the
Goulden--Rattan polynomials $L_k$ seem to have a simpler form than Kerov
polynomials $K_k$ while the numerical evidence for Conjecture
\ref{conj:goulden-rattan} suggests that their coefficients should have a rich
and beautiful structure. Also, Kerov's conjecture (Conjecture \ref{conj:Kerov})
would be an immediate corollary from Conjecture \ref{conj:goulden-rattan}.
For these reasons we tend to believe that the quantities $(C_k)$ are even
better suitable for the asymptotic representation theory then the free
cumulants $(R_k)$ and Conjecture \ref{conj:goulden-rattan} deserves
serious interest.

\subsubsection{$\mathcal R$-expansion}
Another interesting direction of research was pointed out by Lassalle
\cite{Lassalle-preprint2007} who presented quite explicit conjectures on the
form of the coefficients of Kerov polynomials.

\subsubsection{Arithmetic properties of Kerov polynomials}

\begin{proposition}
If $p$ is an odd prime number then $ \frac{\Sigma_{p}-R_{p+1}+2 R_2}{p} $ and 
$\frac{\Sigma_{p-1}-R_p}{p}$ are polynomials in free cumulants $R_2,R_3,\dots$
with nonnegative integer coefficients.
\end{proposition}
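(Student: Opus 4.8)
The plan is to use the combinatorial interpretation of Theorem~\ref{theo:main} to express every coefficient as a cardinality, and then to count these cardinalities modulo $p$ by means of a cyclic group action. Since the coefficients of $K_p$ and $K_{p-1}$ are already nonnegative integers by Theorem~\ref{theo:main}, and the coefficients of $R_{p+1}$ in $K_p$ and of $R_p$ in $K_{p-1}$ equal $1$ by \eqref{eq:expansion}, it suffices to prove two congruences modulo $p$: first, that in $K_p$ the coefficient of $R_2$ is $\equiv -2$ while every other non-leading coefficient is $\equiv 0$; and second, that in $K_{p-1}$ every non-leading coefficient is $\equiv 0$. Once these hold, dividing by $p$ visibly leaves nonnegative integer coefficients (the coefficient of $R_2$ in $\Sigma_p-R_{p+1}+2R_2$ is only increased by $2$, so stays nonnegative). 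The underlying mechanism is that under a $\mathbb{Z}/p\mathbb{Z}$-action every orbit has size $1$ or $p$, so a cardinality is congruent modulo $p$ to its number of fixed points.

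For $\Sigma_p$ this works directly. Set $c=(1,2,\dots,p)$ and let $\mathbb{Z}/p\mathbb{Z}$ act on triples by simultaneous conjugation, $(\sigma_1,\sigma_2,q)\mapsto(c\sigma_1 c^{-1},c\sigma_2 c^{-1},q')$, where $q'$ is transported along the induced bijection of cycles. This preserves the factorization (the product is conjugated, hence still $c$), the cycle types, the intersection graph $\V^{\sigma_1,\sigma_2}$ up to isomorphism, and the colour multiset, so it preserves all conditions of Theorem~\ref{theo:main} and the monomial. A triple is fixed only if $\sigma_1,\sigma_2$ lie in the centraliser of the $p$-cycle $c$, that is $\sigma_1=c^a$, $\sigma_2=c^{1-a}$ for some $a$; one then inspects the $p$ cases. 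For $a=0$ we get $(\id,c)$, contributing the leading $R_{p+1}$; for $a=1$ we get $(c,\id)$, for which conditions \ref{enum:ilosc2}--\ref{enum:boys-and-girls} force the impossible colouring equation $i_0\,p=p+1$, so it contributes nothing; and for each of the $p-2$ values $a=2,\dots,p-1$ both $c^a$ and $c^{1-a}$ are single $p$-cycles, giving exactly the monomial $R_2$. Hence modulo $p$ the coefficient of $R_{p+1}$ is $1$, that of $R_2$ is $p-2\equiv-2$, and all others vanish, which is exactly what is needed.

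The case $\Sigma_{p-1}$ is the main obstacle, because the same symmetry is unavailable: $\Sym(p-1)$ has no element of order $p$, and conjugating by a $p$-cycle does not fix the $(p-1)$-cycle $(1,\dots,p-1)$ but cyclically permutes it among its $p$ conjugates. The natural attempt is to adjoin a point $0$ and pass to the $p$-cycle $\hat c=(0,1,\dots,p-1)$ through the bijection $(\sigma_1,\sigma_2)\mapsto\big((0,1)\sigma_1,\sigma_2\big)$, which realises a triple for $\Sigma_{p-1}$ as a factorization of $\hat c$ in which $\{0\}$ is a pendant black vertex (a fixed point of $\sigma_2$). Transporting the $\mathbb{Z}/p\mathbb{Z}$-conjugation of $\hat c$ to this model is where the difficulty concentrates: the marked point $\{0\}$ breaks the cyclic symmetry, and any argument that merely marks a point produces a set of cardinality $p\cdot N$, which is divisible by $p$ for trivial reasons and yields no information about $N$ itself.

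I therefore expect the divisibility for $\Sigma_{p-1}$ to be genuinely arithmetic rather than a fixed-point count, and would prove it by enumerating directly the factorizations of a $(p-1)$-cycle that contribute to a fixed non-leading monomial — equivalently, two-coloured unicellular maps with $p-1$ edges and positive genus — and exhibiting the factor $p$ in the resulting count. For the linear coefficients this is the known divisibility by $p$ of the Harer--Zagier-type numbers counting such factorizations, and the remaining task, which I regard as the crux, is to obtain the same divisibility uniformly for an arbitrary monomial $R_2^{s_2}R_3^{s_3}\cdots$, presumably by combining the explicit formula for these counts with an elementary congruence such as $p\mid\binom{p}{j}$ for $0<j<p$.
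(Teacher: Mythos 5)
Your treatment of $\frac{\Sigma_p-R_{p+1}+2R_2}{p}$ is complete and is exactly the paper's argument: the $\mathbb{Z}/p\mathbb{Z}$-action by simultaneous conjugation by powers of $c=(1,2,\dots,p)$ on the triples of Theorem \ref{theo:main}, whose fixpoints $(\sigma_1,\sigma_2)=(c^a,c^{1-a})$ contribute $R_{p+1}$ once (for $a=0$), nothing (for $a=1$), and $R_2$ with multiplicity $p-2$ (for $a=2,\dots,p-1$); nonnegativity then comes from Theorem \ref{theo:main} itself. You also correctly diagnose why this symmetry is unavailable for $\Sigma_{p-1}$: adjoining a marked point only produces counts that are trivially divisible by $p$ and says nothing about the original number.

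However, for $\frac{\Sigma_{p-1}-R_p}{p}$ your proposal stops at a hope rather than a proof: you defer the statement to an unproved ``uniform divisibility for an arbitrary monomial $R_2^{s_2}R_3^{s_3}\cdots$'' of Harer--Zagier-type counts, which is essentially a restatement of what must be shown, and no argument is offered for it. This is a genuine gap, and the paper closes it by moving the cyclic symmetry to the other side of the identity: instead of acting on factorizations of a $(p-1)$-cycle (where no order-$p$ symmetry exists), one expresses $R_p$ as a linear combination of the conjugacy classes $\Sigma_\pi$ via the formula of \cite{'Sniady2006a}, which is a sum indexed by the partitions of the $p$-element set $\{1,\dots,p\}$. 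The group $\mathbb{Z}/p\mathbb{Z}$ does act on that index set, and since $p$ is prime its only fixed partitions are the minimal one (whose summand is $\Sigma_{p-1}$) and the maximal one (which turns out not to contribute); every other orbit has size $p$. Hence $R_p-\Sigma_{p-1}$ is $p$ times an integer linear combination of the remaining summands, which are then expressed as integer polynomials in the free cumulants; nonnegativity again follows from Theorem \ref{theo:main}, since the non-leading coefficients of $K_{p-1}$ are nonnegative. The lesson is that the fixed-point method does work for $\Sigma_{p-1}$, but only after one chooses an index set which genuinely carries the $\mathbb{Z}/p\mathbb{Z}$-symmetry --- the partitions of $\{1,\dots,p\}$ appearing in the expansion of $R_p$, rather than factorizations in $\Sym(p-1)$.
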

\begin{proof}
In order to prove that the coefficients of $ \frac{\Sigma_{p}-R_{p+1}+2 R_2}{p}$
are integer we consider the action of the group $\mathbb{Z}/p\mathbb{Z}$ on the
set of triples $(\sigma_1,\sigma_2,q)$ which contribute to Theorem
\ref{theo:main} defined by conjugation $$\psi(i) (\sigma_1,\sigma_2,q) = \big(
c^i \sigma_1 c^{-i}, c^i \sigma_2 c^{-i}, q'),$$ where $c=(1,2,\dots,k)$ is the
cycle; we leave the details how to define $q'$ as a simple exercise. All orbits
of this action consist of $p$ elements except for the fixpoints of this action
which are of the form $\sigma_1=c^a$, $\sigma_2=c^{1-a}$. These fixpoints
contribute to the monomial $R_{k+1}$ (with multiplicity $1$) and to the monomial
$R_2$ (with multiplicity $p-2$). 

In order to prove that the coefficients of $ \frac{\Sigma_{p-1}-R_{p}}{p}$
are integer we express $R_{p}$ as a linear combination of the
conjugacy classes $\Sigma_{\pi}$. A formula for such an expansion presented in
the paper \cite{'Sniady2006a} involves summation over all partitions of the set
$\{1,\dots,p\}$. The group $\mathbb{Z}/p\mathbb{Z}$ acts on such partitions; all
orbits in this action consist of $p$ elements except for the fixpoints of this
action: the minimal partition (which gives $\Sigma_{p-1}$) and the maximal
partition (which turns out not to contribute). We express all summands (except
for the summand corresponding to $\Sigma_{p-1}$) as polynomials in free
cumulants, which finishes the proof.
\end{proof}

The following conjecture was formulated by Światosław Gal (private
communication) based on numerical calculations.
\begin{conjecture}
If $p$ is an odd prime number then
 and
$\frac{\Sigma_{p+1}-R_{p+2}+R_3}{p}$
is a polynomial in free cumulants $R_2,R_3,\dots$ with nonnegative integer
coefficients.
\end{conjecture}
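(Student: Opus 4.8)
The natural strategy is to reproduce, for $K_{p+1}$, the orbit-counting argument that proves the preceding Proposition. Using the combinatorial model of Theorem~\ref{theo:main}, one would seek an action of $\mathbb{Z}/p\mathbb{Z}$ on the triples $(\sigma_1,\sigma_2,q)$ contributing to $K_{p+1}$ such that every orbit has exactly $p$ elements, apart from a short list of fixpoints. Summing over orbits, each coefficient of $K_{p+1}$ is then congruent modulo $p$ to the number of fixpoints landing on the corresponding monomial, so the whole argument reduces to arranging that the only fixpoints contribute $+1$ to the monomial $R_{p+2}$ and $+(p-1)$ to the monomial $R_3$, and nothing to any other monomial.

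Granting such an action, both claims of the statement follow at once. A monomial distinct from $R_{p+2}$ and $R_3$ receives no fixpoint, so its coefficient is $p$ times a number of orbits, a nonnegative multiple of $p$. The coefficient of $R_{p+2}$ equals $pm+1$ for some $m\ge 0$ (in fact $m=0$ by \eqref{eq:expansion}), so after subtracting $R_{p+2}$ it becomes $pm$; the coefficient of $R_3$ equals $pm'+(p-1)$, so after adding $R_3$ it becomes $p(m'+1)$. Division by $p$ therefore produces nonnegative integers throughout. The predicted fixpoint multiplicity $p-1$ for $R_3$ is already testable: by Theorem~\ref{theo:linear} the coefficient of $R_3$ in $K_{p+1}$ counts factorizations of the $(p+1)$-cycle as a product of a $(p+1)$-cycle and a permutation with two cycles, and this number should be $\equiv -1 \pmod p$; indeed it equals $5$ for $K_4$ (with $p=3$) and $84$ for $K_6$ (with $p=5$), both congruent to $-1$.

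The hard part, and presumably the reason the statement is still only a conjecture, is the construction of the action. In the proven case of $\Sigma_p$ one simply conjugates by the full cycle $c=(1,2,\dots,p)$, whose order is the prime $p$; its fixpoints are the permutations commuting with $c$, i.e.\ the powers $c^a$, and enumerating the admissible pairs $(c^a,c^{1-a})$ yields the corrections $-R_{p+1}+2R_2$. For $\Sigma_{p+1}$ this device fails: the cycle $(1,2,\dots,p+1)$ has order $p+1$, which is coprime to $p$, so conjugation by it realizes $\mathbb{Z}/(p+1)\mathbb{Z}$ and governs divisibility by the factors of $p+1$ rather than by $p$. The same mismatch affects the alternative approach through the expansion of free cumulants into conjugacy classes from \cite{'Sniady2006a} used in the second half of the preceding Proposition: that expansion is indexed by set partitions of a $(p+1)$-element set, on which only $\mathbb{Z}/(p+1)\mathbb{Z}$ acts naturally. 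Thus the real task is to exhibit a genuine $\mathbb{Z}/p\mathbb{Z}$-symmetry that does not originate from the evident $(p+1)$-point structure, and that is compatible with condition~\ref{enum:marriage}.

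Partial evidence that $p$ does enter arithmetically comes from the $C$-expansion: by \eqref{eq:definicja-ck-dziwna} the top-degree (degree $p$) part of $\Sigma_{p+1}-R_{p+2}$ equals $\frac{p(p+1)(p+2)}{24}\,C_p$; for $p>3$ the prime $p$ is coprime to $24$, so after clearing that denominator each integer coefficient of this part is divisible by $p$, with nonnegative quotient, and none of its monomials is $R_{p+2}$ or $R_3$. What remains are the lower-degree terms of $\Sigma_{p+1}-R_{p+2}$, including the delicate $R_3$ correction, which are produced by the products of several $C_k$ in the Goulden--Rattan expansion \cite{GouldenRattan2007} and carry rational coefficients. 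Controlling the $p$-divisibility of these terms, and matching the fixpoint count exactly to $\{R_{p+2},R_3\}$ with multiplicities $1$ and $p-1$, is the step I expect to be the main obstacle.
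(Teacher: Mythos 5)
There is a genuine gap, and you have named it yourself: your entire argument is conditional on the existence of a $\mathbb{Z}/p\mathbb{Z}$-action on the triples $(\sigma_1,\sigma_2,q)$ of Theorem~\ref{theo:main} for $K_{p+1}$ whose only fixpoints contribute $+1$ to $R_{p+2}$ and $p-1$ to $R_3$, and you never construct such an action. Everything after ``Granting such an action'' is routine orbit counting; the action itself is the whole content of the statement. Your diagnosis of why the obvious candidate fails is correct --- conjugation by $(1,2,\dots,p+1)$ generates a group of order $p+1$, coprime to $p$, so it says nothing about divisibility by $p$ --- but identifying the obstruction is not the same as overcoming it. Be aware that no step of yours is available to check against the paper, because the paper does not prove this statement either: it appears there as an open conjecture, attributed to \'Swiatos\l{}aw Gal on the basis of numerical calculations, placed immediately after the proposition whose $\mathbb{Z}/p\mathbb{Z}$-conjugation proof you are trying to adapt. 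So your assessment that the construction of the action is ``presumably the reason the statement is still only a conjecture'' matches the paper's own status for this claim.

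Two smaller remarks on the parts you did argue. First, your congruence checks (coefficient of $R_3$ equal to $5$ in $K_4$ and $84$ in $K_6$, both $\equiv -1$ modulo the relevant prime) and your use of \eqref{eq:expansion} to pin the coefficient of $R_{p+2}$ at exactly $1$ are correct, and they are consistent with the required fixpoint multiplicities. Second, your observation about the top-degree part is sound but weaker than you suggest: from \eqref{eq:definicja-ck-dziwna} the degree-$p$ part of $\Sigma_{p+1}-R_{p+2}$ is $\frac{p(p+1)(p+2)}{24}\,C_p$, and since $C_p$ has nonnegative integer coefficients and $\gcd(p,24)=1$ for $p>3$, each coefficient of this part is indeed $p$ times a nonnegative integer. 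But this handles only the single top degree; the conjecture concerns all lower-degree terms as well, where the Goulden--Rattan expansion \cite{GouldenRattan2007} has rational coefficients and no divisibility control is available. As it stands your text is a well-informed reduction of the conjecture to a sharper conjecture (existence of the action), not a proof.
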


We hope that the above claims will shed some light on more precise structure of
Kerov polynomials and on the form of $C$-expansion and $\mathcal{R}$-expansion
described above; they suggest that (maybe up to some small error terms)
$ \Sigma_n - R_{n+1}$ should in some sense be divisible by $(n-1)n(n+1)$ which
supports the conjectures of Lassalle \cite{Lassalle-preprint2007}.

\subsubsection{Discrete version of the functionals $S_2,S_3,\dots$}
\label{subsec:discrete-version}
One of the fundamental ideas in this paper is the use of the fundamental
functionals $S_2,S_3,\dots$ of the shape of a Young diagram defined as
integrals over the area of a Young diagram of the powers of the contents:
\begin{equation}
\label{eq:pierwsze-pojawienie-S}
S^\lambda_n = (n-1) \iint_{\Box\in\lambda} 
(\contents_\Box)^{n-2}\ d\Box 
\end{equation}
(we postpone the precise definition until Section \ref{subsec:functionals}).

It would be interesting to investigate properties of analogous quantities
\begin{equation}
\label{eq:T}
T^\lambda_n = (n-1) \sum_{\Box\in\lambda} (\contents_\Box)^{n-2}
\end{equation}
in which the integral over the Young diagram was replaced by a sum over its
boxes. Notice that unlike the integrals \eqref{eq:pierwsze-pojawienie-S} which
are well-defined for generalized Young diagrams, the sum \eqref{eq:T} makes
sense only if $\lambda$ is a conventional Young diagram but since the resulting
object is a polynomial function on the set of Young diagrams it can be extended
to generalized Young diagrams. This type of quantities have been investigated
by Corteel, Goupil and Schaeffer \cite{CorteelGoupilSchaeffer2004}.

The reason why we find the functional $T_n$ so interesting is that via
non-commutative Fourier transform it corresponds to a central element of the
symmetric group algebra $\C[\Sym(k)]$ given by the following very simple formula
$$T_n = (n-1) \sum_{2\leq i\leq n} X_i^{n-2}, $$
where 
$$X_i= (1,i)+(2,i)+\cdots+(i-1,i)  \in\C[\Sym(k)]$$
are the Jucys-Murphy elements. 

The hidden underlying idea behind the current paper is the differential
calculus on the (polynomial) functions on the set of generalized Young diagrams
$\Y$ in which we study derivatives corresponding to infinitesimal changes of
the shape of a Young diagram, as it can be seen in the proof of Theorem
\ref{theo:stanley-and-S}. It is possible to develop the formalism of such a
differential calculus and to express the results of this paper in such a
language instead of the language of Stanley polynomials (and, in fact, the
initial version of this article was formulated in this way), nevertheless if the
main goal is to prove the Kerov conjecture then this would lead to unnecessary
complication of the paper.

On the other hand, just like the usual differential and integral calculus has
an interesting discrete difference and sum analogue, the above described
differential calculus on generalized Young diagrams has a discrete difference
analogue in which we study the change of the function on the set of Young
diagrams corresponding to addition or removal of a single box. We expect that
just like functionals $(S_n)$ are so useful in the framework of differential
calculus on the set of generalized Young diagrams, functionals $(T_n)$ will be
useful in the framework of the difference calculus on Young diagrams.

It would be very interesting to develop such a difference calculus and to
verify if free cumulants $(R_n)$ have some interesting discrete version which
nicely fits into this setup.

\subsubsection{Characterization of Stanley polynomials}

Lemma \ref{lem:tozsamosci-wielomianow-stanleya} contains some identities
fulfilled by Stanley polynomials. It would be interesting to find some more
such identities. In particular we state the following problem here.

\begin{problem}
Find (minimal set of) conditions which fully characterize the class of Stanley
polynomials $\F(\p\times\q)$ where $\F$ is a polynomial function on the set of
Young diagrams.
\end{problem}

It seems plausible that the answer for this problem is best formulated in the
language of the differential calculus of function on the set of generalized
Young diagrams about which we mentioned in Section
\ref{subsec:discrete-version}.

\subsubsection{Various open problems}

Is there some analogue of Kerov character polynomials for the representation
theory of semisimple Lie groups, in particular for the unitary groups $U(d)$? 
Does existence of Kerov polynomials for characters of symmetric groups
$\Sym(n)$ tell us something (for example via Schur-Weyl duality) about
representations of the unitary groups $U(d)$? Is there some analogue of Kerov
character polynomials in the random matrix theory? Is it possible to study Kerov
polynomials in such a scaling that phenomena of universality of random matrices
occur?

\subsection{Exotic interpretations of Kerov polynomials} 
Theorem \ref{theo:main} gives some interpretation of the coefficients of Kerov
polynomials but clearly it does not mean that there are no other
interpretations. 

\subsubsection{Biane's decomposition} 
\label{subsubsec:biane}
The original conjecture of Biane \cite{Biane2003} suggested that the
coefficients of Kerov polynomials are equal to multiplicities in some
unspecified decomposition of the Cayley graph of the symmetric group into a
signed sum of non-crossing partitions. This result was proved by Féray
\cite{F'eray-preprint2008} but the details of his construction were quite
implicit. In Section \ref{sect:compar} we shall revisit the conjecture of Biane
in the light of our new combinatorial interpretation of Kerov polynomials.
Unfortunately, our understanding of this interpretation of the coefficients of
Kerov polynomials is still not satisfactory and remains as an open problem.

\subsubsection{Multirectangular random matrices}
For a given Young diagram $\lambda$ we consider a Gaussian random matrix
$(A^\lambda_{ij})$ with the shape of $\lambda$. Formally speaking, the entries
of $(A^\lambda_{ij})$ are independent with $A^\lambda_{ij}=0$ if box $(i,j)$
does not belong to $\lambda$; otherwise $\Re A^\lambda_{ij},\Im A^\lambda_{ij}$
are independent Gaussian random variables with mean zero and variance
$\frac{1}{2}$. One can think that either $(A^\lambda_{ij})$ is an infinite
matrix or it is a square (or rectangular) matrix of sufficiently big size.

\begin{theorem}
Kerov polynomials express the moments of the random matrix $A^\lambda$ in terms
of the genus-zero terms in the genus expansion (up to the sign). More
precisely, 
$$ \mathbb{E} \left[ \Tr \big( A^\lambda (A^\lambda{})^{\star}\big)^n \right] =
-K_n(-R_2,-R_3,-R_4,\dots), $$
where $R_i$ is defined as the genus zero term in the expansion for 
$$\mathbb{E} \left[ \Tr \left( A^\lambda
\left(A^\lambda\right)^{\star}\right)^{i-1} \right],$$
or, precisely speaking, 
$$ R_i= \lim_{s\to\infty}  \frac{1}{s^{i}} \mathbb{E} \left[ \Tr \left(
A^{s\lambda}
\left(A^{s\lambda}\right)^{\star}\right)^{i-1} \right].$$
\end{theorem}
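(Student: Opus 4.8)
The plan is to compute the left-hand side directly by Gaussian (Wick) integration and to recognise the resulting sum as an instance of the Stanley--F\'eray character formula, after which the free-cumulant machinery developed in this paper applies almost verbatim. Writing out the trace cyclically,
$$\Tr\big(A^\lambda(A^\lambda)^\star\big)^n=\sum_{\substack{i_1,\dots,i_n\\ j_1,\dots,j_n}}A^\lambda_{i_1j_1}\,\overline{A^\lambda_{i_2j_1}}\,A^\lambda_{i_2j_2}\,\overline{A^\lambda_{i_3j_2}}\cdots A^\lambda_{i_nj_n}\,\overline{A^\lambda_{i_1j_n}},$$
I would expand the expectation by Wick's theorem. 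Since the entries are centred complex Gaussians with $\mathbb{E}\,A^\lambda_{ij}\,\overline{A^\lambda_{kl}}=\delta_{ik}\,\delta_{jl}\,\mathbf 1[(i,j)\in\lambda]$, only the pairings of the $n$ holomorphic factors with the $n$ antiholomorphic factors survive. Such a pairing is recorded by a permutation $\sigma_2\in\Sym(n)$; the covariance constraints then force the column indices to be constant along the cycles of $\sigma_2$ and the row indices to be constant along the cycles of a second permutation $\sigma_1$ with $\sigma_1\circ\sigma_2=(1,2,\dots,n)$, which is exactly condition~\ref{enum:first-condition} of Theorem~\ref{theo:main}.

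For a fixed factorisation the surviving summand is the number of ways of labelling each cycle of $\sigma_1$ by a row index and each cycle of $\sigma_2$ by a column index so that, for every edge of the bipartite graph $\V^{\sigma_1,\sigma_2}$ (that is, for every element shared by an $i$-cycle and a $j$-cycle), the corresponding box lies in $\lambda$. For a multirectangular diagram $\lambda=\p\times\q$ this is a polynomial in $p_1,q_1,\dots$, so $\mathbb{E}\big[\Tr(A^\lambda(A^\lambda)^\star)^n\big]$ is a Stanley polynomial. These labelling counts are precisely the quantities appearing in the Stanley--F\'eray formula (Theorem~\ref{theo:stanley-feray-old}) for $\Sigma_n(\p\times\q)$; the sole difference is that Wick's theorem weighs each factorisation by $+1$, whereas in the character formula a factorisation carries a sign governed by its number of cycles.

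Next I would grade the sum by the genus of the map attached to $(\sigma_1,\sigma_2)$. The planar (genus-zero) factorisations produce the terms of top degree $n+1$, whose sum is by definition the genus-zero part of the moment; after dilation these planar contributions are homogeneous shape functionals of degree $i$, namely the quantities $R_i$ of the statement (equivalently, the free cumulants attached to the limiting spectral distribution of $A^\lambda(A^\lambda)^\star$ through the non-crossing moment--cumulant relations). Each extra unit of genus lowers the degree by two, matching exactly the gradation $K_n=R_{n+1}+(\text{lower degree})$ of the Kerov polynomial. Since the underlying combinatorial object --- factorisations of the cycle weighted by bipartite labelling counts --- is literally the same as the one governing $\Sigma_n$, I would re-express the full sum through the expansion of $\Sigma_n$ into the shape functionals $S_2,S_3,\dots$ (Theorem~\ref{theo:stanley-and-S}) and then into free cumulants (Proposition~\ref{prop:relation-between-r-and-s}), exactly as in the proof of Theorem~\ref{theo:main}, assembling the moment into $K_n$ evaluated at the $R_i$.

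The hard part will be the sign bookkeeping. Because Wick's theorem contributes the \emph{unsigned} factorisation sum while the character is the corresponding \emph{signed} sum, one must check that the discrepancy collapses monomial-by-monomial to a substitution $R_i\mapsto\pm R_i$: the sign attached to a factorisation contributing to $R_2^{s_2}R_3^{s_3}\cdots$ is constant and depends only on $(s_2,s_3,\dots)$, since conditions~\ref{enum:ilosc2}--\ref{enum:boys-and-girls} of Theorem~\ref{theo:main} fix $|C(\sigma_1)|$ and $|C(\sigma_2)|$ in terms of the exponents. Verifying that this collapse reproduces precisely the stated signs $-R_2,R_3,-R_4,\dots$ (and is consistent with the positivity of the moments of the positive-definite matrix $A^\lambda(A^\lambda)^\star$) is the delicate point; apart from this, the argument is a re-run of the proof of Theorem~\ref{theo:main} in which the representation-theoretic weights are replaced by the Gaussian ones, and I expect the Wick expansion and the identification with the Stanley--F\'eray combinatorics to be essentially routine.
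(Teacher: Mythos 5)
Your overall strategy is the right one, and it is worth noting that the paper contains no proof of this statement at all: its ``proof'' is the single sentence that the result follows from \cite{F'eray'Sniady-preprint2007}. Your Wick-expansion route is the natural way to actually obtain it, and its first two steps are correct: the surviving Wick pairings are encoded by factorizations $\sigma_1\circ\sigma_2=(1,2,\dots,n)$, and for a fixed factorization the sum over indices is exactly the labelling count which, for $\lambda=\p\times\q$, equals the summand of Theorem \ref{theo:stanley-feray-old} with the sign $(-1)^{\sigma_1}$ removed.

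The genuine gap is the step you defer, and it cannot be closed in the form stated, because the identity you are trying to prove is false as written: it is off by an overall factor $(-1)^n$. Your own parenthetical about positivity is the decisive check, and you should have carried it out. For $n=1$ the left-hand side is $\mathbb{E}\Tr\big(A^\lambda(A^\lambda)^\star\big)=|\lambda|>0$, whereas $K_1(-R_2,R_3,\dots)=-R_2=-|\lambda|<0$, since here $R_2=\lim_{s\to\infty} s^{-2}\,\mathbb{E}\Tr\big(A^{s\lambda}(A^{s\lambda})^\star\big)=|\lambda|$. The provable statement is
$$\mathbb{E}\left[\Tr\big(A^\lambda(A^\lambda)^\star\big)^n\right]=(-1)^n\,K_n(-R_2,R_3,-R_4,R_5,\dots)=-K_n(-R_2,-R_3,-R_4,\dots),$$
which is what the paper's hedge ``up to the sign'' is papering over. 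A second, more technical, problem is your proposed mechanism for the sign bookkeeping: attributing a fixed sign to each factorization ``contributing to'' a given monomial via conditions \ref{enum:ilosc2}--\ref{enum:boys-and-girls} of Theorem \ref{theo:main} presupposes a factorization-to-monomial matching that the Stanley--F\'eray formula alone does not provide (that matching is essentially the content of Theorem \ref{theo:main} itself, whose proof you do not want to re-run here). The rigorous and short route is a substitution argument: each summand of Theorem \ref{theo:stanley-feray-old} attached to $(\sigma_1,\sigma_2)$ has $\q$-degree exactly $|C(\sigma_1)|$, and $(-1)^{\sigma_1}=(-1)^{n-|C(\sigma_1)|}$, so substituting $\q\mapsto-\q$ into the polynomial identity $\Sigma_n(\p\times\q)=K_n\big(R_2^\lambda(\p\times\q),R_3^\lambda(\p\times\q),\dots\big)$ turns its left-hand side into $(-1)^n$ times your Wick sum; on the right-hand side the same substitution sends each free cumulant $R_i^\lambda$ (the degree-$i$ homogeneous component of $\Sigma_{i-1}$) to $(-1)^{i-1}$ times the genus-zero quantity $R_i$ of the statement, because the substitution preserves the joint degree in $(\p,\q)$ and therefore commutes with extracting top-degree parts. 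Comparing the two sides yields the corrected identity in a few lines and replaces both delicate points of your sketch.
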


This is an immediate consequence of the results from
\cite{F'eray'Sniady-preprint2007}.

% \subsubsection{Okounkov's plantation}
% We conjecture that the coefficient $[C_2^{s_2} C_3^{s_3} \cdots] \Sigma_n$ in
% the $C$-expansion of the character is (asymptotically, can we make some
% precise
% statement here?) equal to the number of $2$-partitions of a $2n$-elements set
% which in the Okounkov's decompositions \cite{Okounkov2000} give rise to $s_2$
% plantations of length $2$, $s_3$ plantations of length $3$,\dots.
% 
% The name plantation does not appear really in Okounkov's paper, but I do not
% know how to call this object. Okounkov decomposes a glued together polygon
% into
% a bunch of straight lines---called plantations--- to which collections of
% trees
% are attached.

\subsubsection{Dimensions of (co)homologies}
In analogy to Kazhdan-Lusztig polynomials it is tempting to ask
if the coefficients of Kerov polynomials might have a topological
interpretation, for example as dimensions of (co)ho\-mo\-lo\-gies of some
interesting geometric objects, maybe related to Schubert varieties, as suggested
by Biane (private communication). This would be supported by the Biane's
decomposition from Section \ref{subsubsec:biane} which maybe is related to
Bruhat order and Schubert cells. In this context it is interesting to ask if the
conditions from Theorem \ref{theo:main} can be interpreted as geometric
conditions on intersections of some geometric objects. Another approach towards
establishing link between Kerov polynomials and Schubert calculus would be to 
relate Kerov polynomials and Schur symmetric polynomials.

%  \item Is the decomposition of the Cayley graph suggested by Biane and found
% by
% Féray in fact a decomposition of a smaller graph, given by Bruhat order? If
% yes, this would simplify a lot looking for connections with Schubert calculus.
% Is there some nice property which makes this decomposition unique or almost
% unique? A walk on the Cayley graph should correspond to a walk on Schubert
% cells.
% \item Can we compare Kerov polynomials with Schubert polynomials and
% Grothendick polynomials?
% \item If we look for occurences of non-crossing partitions in the Schubert
% calculus: maybe one should have a look at \emph{vexillary permutations} or
% \emph{grassmannian permutations}?
% \item 
% \item Suggested reading: Laurent Manivel, \emph{Symmetric Functions, Schubert
% Polynomials and Degeneracy Loci}.

\subsubsection{Schur polynomials}
Each Schur polynomial can be written as quotient of two determinants.
Exactly the same quotient of determinants appears in the
Harish-Chandra-Itzykson-Zuber integral
$$ \int_{U(d)}   e^{A U B U^\star } dU $$
if $A$ and $B$ are hermitian matrices with suitably chosen eigenvalues
(say $(x_i)$ for $A$ and $(\log \lambda_i)$ for $B$).

It would be interesting to verify if Kerov polynomials can be
used to express the exact values of
Schur polynomials by some limit value of Harish-Chandra-Itzykson-Zuber integral
when the size of the matrix tends to infinity and each variable $x_i$
occurs with a multiplicity which tends to infinity; also the shape of the Young
diagram $\lambda$ should tend to infinity, probably in the ``balanced Young
diagram'' way.

\subsubsection{Analytic maps}
We conjecture that Kerov polynomials are related to moduli space of analytic
maps on Riemann surfaces or ramified coverings of a sphere.

\subsubsection{Integrable hierarchy} 
Jonathan Novak (private communication) conjectured that Kerov polynomials
might be algebraic solutions to some integrable
hierarchy (maybe Toda?) and their coefficients are related to the tau function
of the hierarchy.

\subsection{Applications of the main result}
\label{subsec:applications}

\subsubsection{Positivity conjectures and precise information on Kerov
polynomials}

The advantage of the approach to characters of symmetric groups presented in
this article over some other methods is that the formula for the coefficients
given by Theorem \ref{theo:main} does not involve summation of terms of
positive and negative sign unlike most formulas for characters such as
Murnaghan-Nakayama rule or Stanley-Féray formula (Theorem
\ref{theo:stanley-feray-old}). In this way we avoid dealing with complicated
cancellations. For this reason the main result of the current paper seems to be
a
perfect tool for proving stronger results, such as the Conjecture
\ref{conj:goulden-rattan} of Goulden and Rattan or the conjectures of Lassalle
\cite{Lassalle-preprint2007}.

\subsubsection{Genus expansion} One of the important methods in the random
matrix theory and in the representation theory is to express the quantity we
are interested in (for example: moment of a random matrix or character of a
representation) as a sum indexed by some combinatorial objects (for example:
partitions of an ordered set or maps) to which one can associate canonically a
two-dimensional surface \cite{LandoZvonkin2004}. Usually the asymptotic
contribution of such a summand depends on the topology of the surface with
planar objects being asymptotically dominant. This method is called genus
expansion since exponent describing the rate of decay of a given term usually
linearly depends on the genus.

The main result of this article fits perfectly into this philosophy since to any
pair of permutations $\sigma_1$, $\sigma_2$ which contributes to Theorem
\ref{theo:main} or Theorem \ref{theo:rattan-sniady-true} we may associate a
canonical graph on a surface, called a \emph{map}. It is not difficult to show
that also in this situation the degree of the terms $R_2^{s_2} R_3^{s_3}\cdots$ 
to which such a pair of permutations contributes decreases as the genus
increases.

It is natural therefore to ask about the structure of factorizations $\sigma_1
\circ \sigma_2=(1,2,\dots,k)$ with a prescribed genus. As we already pointed
out in Proposition \ref{prop:restriction-on-topology}, condition
\ref{enum:marriage} of Theorem \ref{theo:main} gives strong limitations on the
shape of the resulting bipartite graph $\V^{\sigma_1,\sigma_2}$ which
translate to limitations on the shape of the corresponding map. Very analogous
situation was analyzed in the paper \cite{'Sniady2006a} where it was proved that
by combining a restriction on the genus and a condition analogous to the one
from Proposition \ref{prop:restriction-on-topology} (\emph{``evercrossing
partitions''}) one gets only a finite number of allowed patterns for the
geometric object concerned.

Similar analysis should be possible for the formulas for Kerov polynomials
presented in the current paper which should shed some light on Conjecture
\ref{conj:goulden-rattan} of Goulden and Rattan and the conjectures of Lassalle
\cite{Lassalle-preprint2007}.

\subsubsection{Upper bounds on characters}
It seems plausible that the main result of this article, Theorem \ref{theo:main}
and Theorem \ref{theo:rattan-sniady-true}, can be used to prove new upper bounds
on the characters of symmetric groups
\begin{equation} 
\label{eq:charaktery}
\chi^\lambda(\pi)= \frac{\Tr \rho^\lambda(\pi)}{\text{dimension
of $\rho^\lambda$}} 
\end{equation}
for balanced Young diagram $\lambda$ in the scaling when the length of the
permutation $\pi$ is large compared to the number of boxes of $\lambda$.

The advantage of such approach to estimates on characters over other methods,
such as via Frobenius formula as in the work of Rattan and Śniady
\cite{Rattan'Sniady2008} or via Stanley-Féray formula
\cite{F'eray'Sniady-preprint2007}, becomes particularly visible in the case when
the shape of the Young diagram becomes close to the limit curve for the
Plancherel measure \cite{LoganShepp1977,VersikKerov1977} for which all free
cumulants (except for $R_2$) are close to zero. Indeed, for $\lambda$ in the
neighborhood of this limit curve one should expect much tighter bounds on the
characters \eqref{eq:charaktery} because such Young diagrams maximize the
dimension of the representation which is the denominator of the fraction, while
the numerator can be estimated by Murnaghan-Nakayama rule and some combinatorial
tricks \cite{Roichman1996}.

\subsection{Overview of the paper}
% 
% Sections \ref{sec:interpretations} and \ref{sec:outlook} are intended as an
% extension of the introduction: in Section \ref{sec:interpretations} we
% present alternative interpretations of condition \ref{enum:marriage} in
% Theorem \ref{theo:main} and in Section \ref{sec:outlook} we discuss open
% problems and possible applications of the main result.

In Section \ref{sec:functionals-of-measures} we recall some basic facts about
free cumulants $R_1,R_2,\dots$ and quantities $S_1,S_2,\dots$ for probability
measures on the real line and their relations with each other. The main
result of this section is formula \eqref{eq:s-r} which allows to express
functionals $S_1,S_2,\dots$ in terms of free cumulants $R_1,R_2,\dots$.

In Section \ref{sec:generalized-Young-diagrams} we define the fundamental
functionals $S_2,S_3,\dots$ for generalized Young diagrams and study their
geometric interpretation.

In Section \ref{sec:stanley} we study Stanley polynomials and their relations to
the fundamental functionals $S_2,S_3,\dots$ of shape of a Young diagram.

Section \ref{sec:toyexample} is devoted to a toy example: we shall prove
Theorem \ref{theo:main} in the simplest non-trivial case of
coefficients of the quadratic terms, which is exactly the case in Theorem
\ref{theo:quadratic}. In this way the Reader can see all essential steps of the
proof in a simplified situation when it is possible to avoid technical
difficulties.

In Section \ref{sec:combinatorial-lemmas} we prove some auxiliary combinatorial
results.

In Section \ref{sec:proof} we present the proof of the main result:
Theorem \ref{theo:main} and Theorem \ref{theo:rattan-sniady-true}.

Finally, in Section \ref{sect:compar} we revisit the paper
\cite{F'eray-preprint2008} and we show how rather implicit constructions
of Féray become much more concrete once one knows the formulation of the main
result of the current paper, Theorem \ref{theo:main}. In fact, Section
\ref{sect:compar} provides a short proof of Theorem \ref{theo:main} based on
results from of Féray. This simplicity is however slightly misleading since the
original paper \cite{F'eray-preprint2008} is not easy.

\section{Functionals of measures}
\label{sec:functionals-of-measures}
In this section we present relations between moments $M_1,M_2,\dots$ of a
given probability measure, its free cumulants $R_1,R_2,\dots$ and its
functionals $S_1,S_2,\dots$. The only result of this section which will be used
in the remaining part of the article is equality \eqref{eq:s-r}, nevertheless we
find functionals $S_1,S_2,\dots$ so important that we collected in this section
also some other formulas involving them.

Assume that $\nu$ is a compactly supported measure on $\R$.
% Assume that $\nu$ is a compactly supported Schwartz
% distribution on $\R$. In order to keep notation simple, if
% $\phi:\R\rightarrow\R$ is a sufficiently smooth function we will write $$ \int
% \phi(z) d\nu(z):= \langle \nu, \phi\rangle, $$ i.e.~in the notation we will
% treat distributions in a similar way as measures on $\R$. 
For integer $n\geq 0$ we consider moments of $\nu$ 
$$ M^{\nu}_n= \int z^n d\nu(z) $$ 
and its Cauchy transform $$ G^{\nu}(z)=
\int \frac{1}{z-x} d\nu(x) =\sum_{n\geq 0} \frac{M_n^{\nu}}{z^{1+n}};$$ 
the integral and the series make sense in a neighborhood of infinity.

% In case when 
% $$\int 1\ d\nu(z)=1$$ 
% (for example, if $\nu$ is a probability measure) 
From the following on we assume that $\nu$ is a compactly supported
probability measure on $\R$.
We define a sequence $(S^\nu_n)_{n\geq 1}$ of the coefficients of the expansion
$$ S^{\nu}(z)=\log z G^{\nu}(z) = \sum_{n\geq 1} \frac{S^{\nu}_n}{z^n} $$ 
in a neighborhood of infinity
and a sequence $(R^{\nu}_n)_{n\geq 1}$  of free cumulants as the coefficients of
the expansion
\begin{equation}
\label{eq:free-cumulants}  
R^{\nu}(z)=\left( G^{\nu} \right)^{\langle -1\rangle}(z) - \frac{1}{z}=
\sum_{n\geq 1}
R^{\nu}_n z^{n-1}  
\end{equation}
in a neighborhood of $0$, where $\left( G^{\nu} \right)^{\langle -1\rangle}$ is
the right inverse of $G^{\nu}$ with respect to the composition of functions
\cite{Voiculescu1986}.
When it does not lead to confusions we shall omit the superscript in the
expressions $M^{\nu}_n$, $G^{\nu}$, $S^{\nu}$, $R^{\nu}$, $S^{\nu}_n$,
$R^{\nu}_n$.

The relation between the moments and the free cumulants is given by
the following combinatorial formula which, in fact, can be regarded
as an alternative definition of free cumulants \cite{Speicher1998}:
\begin{equation}
\label{eq:moment-cumulant}
M_n = \sum_{\Pi\in\NC_n } R_{\Pi},
\end{equation}
where the summation is carried over all non-crossing partitions of $n$-element
set and where $R_{\Pi}$ is defined as the multiplicative extension of $(R_k)$:
$$ R_{\Pi} = \prod_{b\in \Pi} R_{|b|}, $$
where the product is taken over all blocks $b$ of the partition $\Pi$ and $|b|$
denotes the number of the elements in $b$ \cite{Speicher1998}.

Information about the measure $\nu$ can be described in various ways; in this
article descriptions in terms of the sequences $(S_n)$ and $(R_n)$ play eminent
role and we need to be able to relate each of these sequences to the other.
We shall do it in the following.

\begin{lemma}
\label{lem:pochodnapowolnych}
For any integer $k\geq 1$
$$ \frac{\partial G(R_1,R_2,\dots)}{\partial R_k}(z) = - \frac{1}{k}\bigg(
\big[G(z)\big]^k \bigg)'= - G^{k-1}(z) G'(z), $$
where both sides of the above equality are regarded as formal power series in
powers of $\frac{1}{z}$ with the 
coefficients being polynomials in $R_1,R_2,\dots$. 
\end{lemma}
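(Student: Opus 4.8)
The plan is to differentiate the functional-inverse relation that defines the free cumulants, viewing every object as a formal power series in $1/z$ whose coefficients are polynomials in $R_1,R_2,\dots$. Write $K(z)=\left(G\right)^{\langle -1\rangle}(z)=\frac{1}{z}+\sum_{n\geq 1} R_n z^{n-1}$ for the compositional inverse of $G$; since we work with formal power series the right inverse is automatically a two-sided inverse, so both $G(K(z))=z$ and $K(G(z))=z$ hold. The coefficients of $G$, namely the moments $M_n$, are polynomials in the free cumulants through the moment-cumulant formula \eqref{eq:moment-cumulant}, and this is precisely what makes $\frac{\partial G}{\partial R_k}$ meaningful.

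First I would differentiate the identity $G(K(z))=z$ with respect to the parameter $R_k$, holding the variable $z$ fixed. The right-hand side is independent of $R_k$, so the chain rule gives
$$\left(\frac{\partial G}{\partial R_k}\right)(K(z)) + G'(K(z))\,\frac{\partial K}{\partial R_k}(z) = 0,$$
where $\frac{\partial G}{\partial R_k}$ denotes the derivative with respect to the coefficient $R_k$ evaluated at the argument $K(z)$, while $G'$ and $K'$ are derivatives with respect to the variable. From the explicit form of $K$ the inner derivative is simply $\frac{\partial K}{\partial R_k}(z)=z^{k-1}$, and differentiating $G(K(z))=z$ in $z$ gives $G'(K(z))=1/K'(z)$.

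Next I would evaluate the resulting identity at $z=G(w)$, using $K(G(w))=w$ to collapse the argument on the left and using $K'(G(w))=1/G'(w)$ (obtained by differentiating $K(G(w))=w$ in $w$). This yields directly
$$\frac{\partial G}{\partial R_k}(w) = -\,G(w)^{k-1}\,G'(w),$$
and since $\left(G^k\right)'=k\,G^{k-1}\,G'$, the claimed form $-\frac{1}{k}\left(\left[G\right]^k\right)'$ follows immediately upon renaming $w$ back to $z$.

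The computation itself is short; the only point requiring care — and the main, rather minor, obstacle — is justifying the formal manipulations. One must check that the compositions $G\circ K$ and $K\circ G$, together with the substitution $z\mapsto G(w)$, are well-defined operations on the relevant spaces of formal Laurent/power series, and that differentiation with respect to the parameter $R_k$ commutes with these operations. This is routine given that $G(z)=\frac{1}{z}+O(z^{-2})$ has valuation $1$ in $1/z$ while $K(w)=\frac{1}{w}+O(1)$ inverts it, but it should be stated explicitly so that the chain-rule step rests on firm ground.
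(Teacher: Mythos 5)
Your proposal is correct and follows essentially the same route as the paper: both differentiate the defining inverse relation $G\bigl(R(z)+\tfrac{1}{z}\bigr)=z$ with respect to $R_k$, use $\frac{\partial}{\partial R_k}\bigl(R(z)+\tfrac{1}{z}\bigr)=z^{k-1}$, and then rewrite $z$ as $G$ evaluated at the new variable (the paper via the substitution $t=R(z)+\tfrac{1}{z}$ and the identity $z=G(t)$, you via evaluating at $z=G(w)$, which is the same manipulation). Your explicit remark that the right inverse is two-sided for formal series makes precise a point the paper leaves implicit, but this is a presentational difference only.
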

\begin{proof}
Equation \eqref{eq:free-cumulants} is equivalent to
\begin{equation}
\label{eq:inverse}
 G\left( R(z)+\frac{1}{z} \right) = z. 
\end{equation}
We denote 
$$ t= R(z)+\frac{1}{z}. $$
Let us keep all free cumulants fixed except for $R_k$, we shall treat $G$ as a
function of free cumulants. By taking the derivatives of both sides of
\eqref{eq:inverse} it follows that
\begin{multline*}
0=\frac{\partial}{\partial R_k}\left[ G\left(
R(z)+\frac{1}{z}
\right)\right] = \frac{\partial G}{\partial R_k} (t) + G'(t)
\frac{\partial}{\partial R_k}\left(
R(z)+\frac{1}{z}
\right) = \\
\frac{\partial G}{\partial R_k} (t) + G'(t) z^{k-1} =
\frac{\partial G}{\partial R_k} (t) + G'(t)\cdot  G^{k-1}(t) 
\end{multline*}
which finishes the proof. 
\end{proof}

\begin{proposition}
\label{prop:relation-between-r-and-s}
% Sequences $(S_n)$ and $(R_n)$ corresponding to the same probability measure
% are related to each other by equalities
For any integer $n\geq 1$
\begin{align}
\label{eq:momentykumulanty}
 M_n &= \sum_{l\geq 1} \frac{1}{l!}  (n)_{l-1} 
\sum_{\substack{k_1,\dots,k_l\geq 1 \\ k_1+\cdots+k_l=n }} R_{k_1} \cdots
R_{k_l},\\
\label{eq:s-r}
 S_n &= \sum_{l\geq 1} \frac{1}{l!}  (n-1)_{l-1} 
\sum_{\substack{k_1,\dots,k_l\geq 1 \\ k_1+\cdots+k_l=n }} R_{k_1} \cdots
R_{k_l}, \\
\label{eq:r-s}
R_{n} &=
 \sum_{l\geq 1} \frac{1}{l!} (-n+1)^{l-1} \sum_{\substack{k_1,\dots,k_l\geq 1 \\
k_1+\cdots+k_l=n }} S_{k_1} \cdots S_{k_l},
\end{align}
where $$(a)_b=\underbrace{a (a-1) \cdots (a-b+1)}_{b \text{ factors}}$$ denotes
the falling factorial.
\end{proposition}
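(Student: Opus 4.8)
The plan is to reduce all three identities to the Lagrange--Bürmann inversion formula applied to a single functional equation. First I would rewrite the defining relations in a form ready for Lagrange inversion. Set $\phi(w) = 1 + \sum_{k\geq 1} R_k w^k$. By the definition \eqref{eq:free-cumulants} of free cumulants we have $G^{\langle -1\rangle}(w) = R(w) + \frac1w = \phi(w)/w$, so writing $u = 1/z$ the power series $w = G(z)$ is the unique solution of
$$ w = u\,\phi(w), \qquad \phi(0) = 1. $$
Two immediate consequences, obtained by substituting $w = G(z)$, are $z\,G(z) = \phi\big(G(z)\big)$ and hence
$$ S(z) = \log\big(z\,G(z)\big) = \log \phi\big(G(z)\big), \qquad z\,G(z) = e^{S(z)}. $$
In this way $M_n$, $S_n$, $R_n$ all become coefficients of explicit functions of $w = G(z)$ (or of its inverse), and Lagrange inversion converts them into coefficients of powers of $\phi$.

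For \eqref{eq:momentykumulanty} and \eqref{eq:s-r} I would read off $M_n = [u^{n+1}]\,w$ (since $G = \sum_m M_m u^{1+m}$) and $S_n = [u^n]\log\phi(w)$. Applying Lagrange inversion in the form $[u^n]H(w) = \frac1n[w^{n-1}]\big(H'(w)\phi(w)^n\big)$ with $H(w) = w$ gives $M_n = \frac1{n+1}[w^n]\phi(w)^{n+1}$; with $H(w) = \log\phi(w)$, using $\frac{\phi'}{\phi}\phi^n = \phi'\phi^{n-1} = \frac1n(\phi^n)'$ and $[w^{n-1}](\phi^n)' = n[w^n]\phi^n$, it gives $S_n = \frac1n[w^n]\phi(w)^n$. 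It then remains to expand $\phi^p = (1 + \sum_{k\geq1}R_k w^k)^p$ by the binomial and multinomial theorems, so that $[w^n]\phi^p = \sum_{l\geq0}\binom{p}{l}\sum_{k_1+\cdots+k_l=n}R_{k_1}\cdots R_{k_l}$, and to verify the elementary identities
$$ \frac{1}{n+1}\binom{n+1}{l} = \frac{(n)_{l-1}}{l!}, \qquad \frac1n\binom nl = \frac{(n-1)_{l-1}}{l!}, $$
which turn these two expressions into exactly \eqref{eq:momentykumulanty} and \eqref{eq:s-r}.

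For \eqref{eq:r-s} I would invert the relation $z\,G(z) = e^{S(z)}$, regarding $w = u\,e^{S(u)}$ (with $S(u) = \sum_{k\geq1}S_k u^k$) as the functional equation and recovering $R$ from the inverse series $u = u(w)$. Since $G^{\langle -1\rangle}(w) = 1/u(w) = e^{S(u(w))}/w$ and $G^{\langle-1\rangle}(w) = \frac1w + \sum_{n\geq1}R_n w^{n-1}$, I obtain $R_n = [w^n]\,e^{S(u(w))}$. A second application of Lagrange inversion, now with weight $e^{-S}$ since $u = w\,e^{-S(u)}$, together with the identity $S'e^{-(n-1)S} = -\frac1{n-1}(e^{-(n-1)S})'$, yields for $n\geq2$
$$ R_n = -\frac{1}{n-1}\,[u^n]\,e^{-(n-1)S(u)} = -\frac{1}{n-1}\sum_{l\geq1}\frac{(-(n-1))^l}{l!}\sum_{k_1+\cdots+k_l=n}S_{k_1}\cdots S_{k_l}. $$
Simplifying the prefactor via $(-1)^{l+1}(n-1)^{l-1} = (-n+1)^{l-1}$ reproduces \eqref{eq:r-s}, and the case $n=1$ (where $R_1 = S_1$) I would check directly, noting the formula agrees under the convention $0^0=1$.

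The main obstacle I anticipate is bookkeeping rather than anything conceptual. The delicate points are (i) the use of Lagrange inversion for \eqref{eq:r-s} in the Laurent setting, where $1/u(w)$ has a simple pole at $w=0$, so one must track the $w^{-1}$ term and justify that it contributes only the known $\frac1w$ of $G^{\langle-1\rangle}$; and (ii) the sign and edge-case analysis $(-1)^{l+1}(n-1)^{l-1} = (-n+1)^{l-1}$ together with the separate treatment of $n=1$. As alternatives, \eqref{eq:r-s} can be obtained purely algebraically as the formal compositional inverse of \eqref{eq:s-r}, and Lemma \ref{lem:pochodnapowolnych} can be integrated in the variables $R_k$ to give \eqref{eq:momentykumulanty} and \eqref{eq:s-r} directly; but the unified Lagrange-inversion argument above seems the cleanest.
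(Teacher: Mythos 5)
Your proof is correct, but it follows a genuinely different route from the paper's for the first two identities. The paper proves \eqref{eq:momentykumulanty} and \eqref{eq:s-r} by a differential argument: it uses Lemma \ref{lem:pochodnapowolnych} (the formula $\frac{\partial G}{\partial R_k} = -G^{k-1}G'$) to show that mixed partial derivatives of $M_n$ in the $R_k$'s depend only on the sum of the indices, reduces by induction to derivatives of the form $\frac{\partial^l}{(\partial R_1)^{l-1}\,\partial R_{n-l+1}}$, evaluates these at $R_1=R_2=\cdots=0$ by counting non-crossing partitions via the moment--cumulant formula \eqref{eq:moment-cumulant}, and handles the all-ones case of \eqref{eq:s-r} by the separate trick of testing against the Dirac measure $\delta_a$; only \eqref{eq:r-s} is obtained there by Lagrange inversion. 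You instead derive all three identities uniformly from the single functional equation $w = u\,\phi(w)$, getting the closed forms $M_n = \frac{1}{n+1}[w^n]\phi^{n+1}$, $S_n = \frac{1}{n}[w^n]\phi^{n}$ and $R_n = -\frac{1}{n-1}[u^n]e^{-(n-1)S}$ (your last intermediate formula coincides with the one the paper extracts from Lagrange inversion), followed by multinomial expansion and elementary binomial identities, all of which check out. What your approach buys is economy and uniformity: it needs neither Lemma \ref{lem:pochodnapowolnych}, nor the non-crossing-partition formula \eqref{eq:moment-cumulant}, nor the Dirac-measure special case. What the paper's approach buys is that its intermediate output --- the values of $\left.\frac{\partial^l S_n}{\partial R_{k_1}\cdots \partial R_{k_l}}\right|_{R_2=R_3=\cdots=0}$ --- is exactly the Taylor-coefficient form in which the proposition is later consumed (e.g.\ in Corollary \ref{coro:jak-policzyc-pochodne-po-R}), and the derivative formalism it develops recurs throughout the paper; your worries (i)--(ii) about the Laurent pole and the $n=1$ edge case are real but handled correctly in your own write-up.
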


\begin{proof}
Lemma \ref{lem:pochodnapowolnych} shows that
$$ \frac{\partial^2 G(R_1,R_2,\dots)}{\partial R_k \ \partial R_l}(z) =  
\frac{1}{k+l-1} \bigg(\big[G(z)\big]^{k+l-1} \bigg)'' $$
therefore if $k+l=k'+l'$ then
$$ \frac{\partial^2 M_n(R_1,R_2,\dots)}{\partial R_k \ \partial R_l} =
\frac{\partial^2 M_n(R_1,R_2,\dots)}{\partial R_{k'} \ \partial R_{l'}}.$$
It follows by induction that 
$$ \frac{\partial^l M_n(R_1,R_2,\dots)}{\partial R_{k_1} \cdots \partial
R_{k_l}}=
\frac{\partial^l M_n(R_1,R_2,\dots)}{(\partial R_{1})^{l-1} \partial
R_{k_1+\cdots+k_l-(l-1)}}.
$$
From the moment-cumulant formula \eqref{eq:moment-cumulant} it follows that for
$R_1=R_2=\cdots=0$ the
right-hand side of the above equation is equal to the number of non-crossing
partitions with an ordering of blocks, such that the numbers of elements in
consecutive blocks are as follows:  
$$\underbrace{1,\dots,1}_{l-1 \text{ times}},k_1+\cdots+k_l-(l-1).$$ 
Such non-crossing partitions have a particularly simple structure therefore it
is very easy to find their cardinality. Therefore
\begin{equation} 
\label{eq:pochodne-z-m}
\left. \frac{\partial^l M_n(R_1,R_2,\dots)}{(\partial
R_{1})^{l-1} \partial
R_{k_1+\cdots+k_l-(l-1)}} 
\right|_{R_1=R_2=\cdots=0} = 
\begin{cases}
(n)_{l-1} & \text{if } n=k_1+\cdots+k_l, \\
0 & \text{otherwise,}
\end{cases}
\end{equation}
which finishes the proof of \eqref{eq:momentykumulanty}. 

Lemma \ref{lem:pochodnapowolnych} shows that for $k\geq 2$
$$ \frac{\partial S(R_1,R_2,\dots)}{\partial R_k}(z)= \frac{\partial \log \big[
z G(z)\big]}{\partial R_k}= 
-G^{k-2} G' = \frac{\partial G(R_1,R_2,\dots)}{\partial R_{k-1}} $$
therefore 
$$ \frac{\partial S_n(R_1,R_2,\dots)}{\partial R_k}=\frac{\partial
M_{n-1}(R_1,R_2,\dots)}{\partial R_{k-1}}.$$
Assume that $k_l\geq 2$; then
$$\frac{\partial^l S_n(R_1,R_2,\dots)}{\partial R_{k_1} \cdots \partial
R_{k_l}} 
=\frac{\partial^l M_{n-1}(R_1,R_2,\dots)}{\partial R_{k_1} \cdots \partial
R_{k_l-1}} $$
% The right hand side of the above equality for $R_1=R_2=\cdots=0$ is equal to
% $$ \left. \frac{\partial^l M_{n-1}(R_1,R_2,\dots)}{\partial R_{k_1} \cdots
% \partial R_{k_l-1}} \right|_{R_1=R_2=\cdots=0} =
% \begin{cases}
% (n-1)_{l-1} & \text{if } n=k_1+\cdots+k_l, \\
% 0 & \text{otherwise.}
% \end{cases}
% $$
which is calculated in Eq.~\eqref{eq:pochodne-z-m}. In this way we proved
that if $(k_1,\dots,k_l)\neq (1,1,\cdots,1)$ then 
$$ \left. \frac{\partial^l S_n(R_1,R_2,\dots)}{\partial R_{k_1} \cdots \partial
R_{k_l}} \right|_{R_1=R_2=\cdots=0} =
\begin{cases}
(n-1)_{l-1} & \text{if } n=k_1+\cdots+k_l, \\
0 & \text{otherwise.}
\end{cases}$$
In order to prove the case $k_1=\cdots=k_l=1$ it is enough to consider the
Dirac point measure $\nu=\delta_a$ for which $G(z)=\frac{1}{z-a}$, $R_1=a$,
$R_2=R_3=\cdots=0$ and $S(z)=-\log \left(1-\frac{a}{z} \right)$,
$S_n=\frac{a^n}{n}$. In this way the proof of \eqref{eq:s-r} is finished.

Lagrange inversion formula shows that
\begin{multline*} 
R_{n+1}= - \frac{1}{n} \left[\frac{1}{z}\right] \left( \frac{1}{G(z)} \right)^n
=
 - \frac{1}{n} \left[\frac{1}{z^{n+1}}\right] \exp [-n S(z)] = \\
 \sum_{l\geq 1} \frac{1}{l!} (-n)^{l-1} \sum_{\substack{k_1,\dots,k_l\geq 1 \\
k_1+\cdots+k_l=n+1 }} S_{k_1} \cdots S_{k_l}
 \end{multline*}
which finishes the proof of \eqref{eq:r-s}.
\end{proof}

\section{Generalized Young diagrams}
\label{sec:generalized-Young-diagrams}

The main result of this section is the formula \eqref{eq:definicja-s} which
relates the fundamental functionals $S_2,S_3,\dots$ to the geometric shape of
the Young diagram. 

In the following we base on the notations introduced in Section
\ref{subsec:generalized}.

\subsection{Measure on a diagram and contents of a box} 

Notice that each unit box of a Young diagram drawn in the French convention
becomes in the Russian notation a square of side $\sqrt{2}$. For this reason,
when drawing a Young diagram according to the French convention we will use the
plane equipped with the usual measure (i.e.\ the area of a unit square is equal
to $1$) and when drawing a Young diagram according to the Russian notation we
will use the plane equipped with the usual measure divided by $2$ (i.e.\ the
area of a unit square is equal to $\frac{1}{2}$). In this way a (generalized)
Young diagram has the same area when drawn in the French and in the Russian
convention.

Speaking very informally, the setup of generalized Young diagrams corresponds
to looking at a Young diagram from very far away so that individual boxes
become very small. Therefore by the term \emph{box of a Young diagram $\lambda$}
we will understand simply any point $\Box$ which belongs to $\lambda$. In the
case of the Russian convention this means that $\Box=(x,y)$ fulfills
$$ |x| < y < \lambda(x). $$
We define the contents of the box $\Box=(x,y)$ in the Russian convention by
$\contents_\Box=x$.

In the case of the French convention $\Box=(x,y)$ belongs to a diagram
$\lambda$ if
$$ x > 0 \qquad \text{ and } \qquad 0 < y < \lambda(x) $$ and the contents of
the box $\Box=(x,y)$ is defined by $\contents_\Box=x-y$.

\subsection{Functionals of Young diagrams}
\label{subsec:functionals}
The above definitions of the measure on the plane and of the contents in the
case of French and Russian conventions are compatible with each other,
therefore it is possible to define some quantities in a convention-independent
way. In particular, we define the  \emph{fundamental functionals of shape} of a
generalized Young diagram
\begin{equation}
\label{eq:definicja-s}
S^\lambda_n = (n-1) \iint_{\Box\in\lambda} 
(\contents_\Box)^{n-2}\ d\Box  
\end{equation}
for integer $n\geq 2$. Clearly, each functional $S_n$ is a homogeneous function
of the Young diagram with degree $n$.

Let a generalized Young diagram $\lambda:\R\rightarrow\R_+$ drawn in the
Russian convention be fixed. We associate to it a function
$$ \tau^{\lambda}(x)=  \frac{\lambda(x)-|x|}{2}$$
% for $x\in\R$ 
which gives the distribution of the contents of the boxes of $\lambda$. When
it does not lead to confusions we will write for simplicity $\tau$ instead of
$\tau^\lambda$. In the following we
shall view $\tau$ as a measure on $\R$. Its Cauchy transform can be written as 
$$ G^\tau(z)  = \iint_{\Box\in\lambda} \frac{1}{z-\contents_\Box} d\Box. $$
With these notations we have that
$$ S^\lambda_n = (n-1) \int x^{n-2}\ \tau(x)\ dx =  - \int x^{n-1}\ \tau'(x)\
dx$$
are (rescaled) moments of the measure $\tau$ or, alternatively,
(shifted) moments of the Schwartz distribution $-\tau'$.

We define
% \begin{multline*} 
$$ S^\lambda(z) = \sum_{n\geq 2} \frac{S^{\lambda}_n}{z^n} =
\iint_{\Box\in\lambda} \frac{1}{(z-\contents_\Box)^2} \ d\Box $$
where the second equality follows by expanding right-hand side into a power
series and \eqref{eq:definicja-s}. It follows that
\begin{multline*} S^\lambda(z)= - \frac{d}{dz}
G^{\tau}(z)=
G^{-\tau'}(z) = -\int \frac{1}{z-x} \tau'(x)\ dx=\\
- \int \log(z-x)\ \tau''(x)\ dx, 
\end{multline*}
% [TO DO: Markov-Krein transform, interlacing measures, Reyleigh measures,
% multiplicative Cauchy transform]
in particular $S^\lambda(z)$ coincides with the Cauchy transform of
a Schwartz distribution $-\tau'$. The above formulas show that $S^\lambda(z)$
and $S_n^\lambda(z)$ coincide (up to small modifications) with the quantities
considered by Kerov
\cite{Kerov1999,Kerov2003}, Ivanov and Olshanski \cite{IvanovOlshanski2002}.

%%%%%%% INFORMACJA:
%%%%%%%  
%%%%%%% \tau_{Kerov} = \tau_{Sniady}''+ \delta_0
%%%%%%%
%%%%%%%

% \begin{multline*}
% S^{\mu_\lambda}(z_0) = - \int \log(z_0-z) \left( \frac{\lambda(z)-|z|}{2}
% \right)'' dz = \\
%  - \int \frac{1}{(z_0-z)^2} \cdot \frac{\lambda(z)-|z|}{2}\  dz =
%  \sum_{k\geq 2} \frac{1}{z_0^i} \int (k-1) z^{k-2} \cdot
% \frac{\lambda(z)-|z|}{2} dz, 
% \end{multline*}
% in other words 
% \begin{multline*} 
% S^{\mu_\lambda})_k =  \int (k-1) z^{k-2} \cdot \frac{\lambda(z)-|z|}{2} dz =
% \\
% \int_{\Box\in\lambda} (k-1) C_\Box^{k-2} d\Box
%  \end{multline*}

\subsection{Kerov transition measure}
The corresponding Cauchy transform 
\begin{equation} 
\label{eq:cauchy-for-young}
G^{\lambda}(z)= \frac{1}{z} \exp S^{\lambda}(z)  
\end{equation}
is a Cauchy transform of a probability measure $\mu_{\lambda}$ on the real
line, called \emph{Kerov transition measure} of $\lambda$
\cite{Kerov1999,Kerov2003}.
Probably it would be more correct to write $G^{\mu_{\lambda}}$ instead of
$G^\lambda$ and to write $S^{\mu_\lambda}$ instead of $S^\lambda$, but this
would lead to unnecessary complexity of the notation.

One of the reasons why Kerov's transition measure was so successful in the
asymptotic representation theory of symmetric groups is that it can be defined
in several equivalent ways, related either to the shape of $\lambda$ or to
representation theory or to moments of Jucys-Murphy elements or to certain
matrices. For a review of these approaches we refer to \cite{Biane1998}.

\subsection{Free cumulants of a Young diagram}
In order to keep the introduction as non-technical as possible, we introduced
free cumulants of a Young diagram by the formula
\eqref{eq:free-cumulants-strange}. The conventional way of defining them is
to use \eqref{eq:free-cumulants} for the Cauchy transform given by
\eqref{eq:cauchy-for-young}. Therefore, one should make sure that these two
definitions are equivalent. This can be done thanks to Frobenius formula
$$ \Sigma_{k-1}^\lambda = - \frac{1}{k-1} \left[\frac{1}{z}\right]
\frac{1}{G^\lambda(z-1) G^\lambda(z-2) \cdots G^\lambda\big(z-(k-1)\big)} $$
which shows that
$$ \frac{1}{s^{k}} \Sigma_{k-1}^{s\lambda} = - \frac{1}{k-1}
\left[\frac{1}{z}\right] \frac{1}{G^\lambda\left(z-\frac{1}{s}\right)
G^\lambda\left(z-\frac{2}{s}\right) \cdots
G^\lambda\left(z-\frac{k-1}{s}\right)}; $$
therefore definition \eqref{eq:free-cumulants-strange} would give
$$ R^\lambda_k =- \frac{1}{k-1} \left[ \frac{1}{z} \right] \left(
\frac{1}{G^\lambda(z)}\right)^{k-1} $$
which coincides with the value given by the Lagrange inversion formula applied
to \eqref{eq:free-cumulants}.

\subsection{Polynomial functions on the set of Young diagrams}

For simplicity we shall often drop the explicit dependence of the functionals
of Young diagrams from $\lambda$.
% denote
% $$ M_n=M_n^{\mu_{\lambda}}, \quad S_n=R_n^{\mu_{\lambda}},\quad R_n =
% R_n^{\mu_{\lambda}}, 
% \quad \Sigma_{\pi}=\Sigma^{\lambda}_{\pi}
% ,
% \quad N_{\sigma_1,\sigma_2}=N^{\lambda}_{\sigma_1,\sigma_2}
% $$
% and we will view them as functions on the set of Young diagrams. 
Since the transition measure $\mu^\lambda$ is always centered it follows that
$M_1=R_1=S_1=0$.

Existence of Kerov polynomials allows us define formally the normalized
characters $\Sigma_{\pi}^\lambda$ even if $\lambda$ is a generalized Young
diagram.

We will say that a function on the set of generalized Young diagrams $\Y$ is a
\emph{polynomial function} if one of the following equivalent conditions hold
\cite{IvanovOlshanski2002}: 
\begin{itemize}
 \item it is a polynomial in $M_2,M_3,\dots$;
 \item it is a polynomial in $S_2,S_3,\dots$;
 \item it is a polynomial in $R_2,R_3,\dots$;
 \item it is a polynomial in $(\Sigma_{\pi})_{\pi}$.
\end{itemize}

\section{Stanley polynomials and Stanley-Féray character formula}
\label{sec:grafsf}
\label{sec:stanley}

\subsection{Stanley polynomials}

% Let $\G=V_1\sqcup V_2$ be a bipartite graph as above. We consider a coloring
% $k:V_2\rightarrow\N$ of the vertices of $V_2$ and we extend it to a coloring
% $k:\G\rightarrow\N$ by declaring that the color $k(v)$ of a vertex $v\in V_1$
% is equal to the maximum of the colors of the adjacent vertices in $V_2$.
% 
% For such a coloring $k$ we define
% $$ \p_k \q_k= 
% \prod_{v\in V_1} p_{k(v)} \prod_{v\in V_2} q_{k(v)}.$$  
% 
% \begin{proposition}
%  $$ N^{\p\times \q}_\G = \sum_k   \p_k \q_k,
% $$
% where the sum runs over all colorings $k$ as described above.
% \end{proposition}

% In the following we shall treat $\p=(p_1,p_2,\dots)$, 
% $\q=(q_1,q_2,\dots)$ as families of indeterminates and we define
% Stanley polynomials by the formula
% $$ N_\G(\p,\q) = \sum_k   \p_k \q_k. $$
% 
% [maybe we shaould rather use a completely different definition of Stanley
% polynomials]

% For given
% permutations $\sigma_1,\sigma_2\in S_l$ we shall consider a bipartite graph
% $\G$
% with the set of vertices $C(\sigma_1)\sqcup C(\sigma_2)$ and with an edge
% between vertices $c_1\in C(\sigma_1)$, $c_2\in C(\sigma_2)$ if $c_1\cap
% c_2\neq
% \emptyset$. In such case we denote
% $N^{\lambda}_{\sigma_1,\sigma_2}=N^{\lambda}_\G$.

% in other words
% \begin{equation}
% \label{eq:warunek}
% 0<h(c_1)\leq \lambda_{h(c_2)}
% \end{equation}
% holds true for all $c_1\in C(\sigma_1)$, $c_2\in C(\sigma_2)$ such that
% $c_1\cap
% c_2\neq\emptyset$.

% The following theorem will be our main tool in the analysis of asymptotics of
% representations
% of symmetric groups.

\begin{proposition}
Let $\F:\Y\rightarrow\R$ be a polynomial function on the set of
generalized Young diagrams. Then $(\p,\q)\mapsto \F(\mathbf
p\times \q)$ for $\p=(p_1,\dots,p_m)$, $\mathbf
q=(q_1,\dots,q_m)$ is a polynomial in indeterminates
$p_1,\dots,p_m,q_1,\dots,q_m$, called \emph{Stanley polynomial}.
% 
% For a permutation $\pi\in S_l$ the corresponding polynomial for the normalized
% character $\Sigma_{\pi}$ is given by
% \begin{equation}
% \label{eq:stanley-feray}
%  \Sigma^{\p\times \q}(\pi)= \text{[to be filled in]}
% \end{equation}
% and for the free cumulant $R_{n+1}$ is given by
% \begin{equation}
% \label{eq:stanley-feray-free}
%  R_{n+1}^{\p \times \q}= \text{[to be filled in]}
% \end{equation}
\end{proposition}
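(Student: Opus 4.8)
The plan is to reduce the statement to the most basic building blocks and then argue that being a polynomial function is preserved under the relevant operations. By the characterization of polynomial functions recalled just before the statement, it suffices to treat the case $\F = S_n$ for a single functional of shape, since an arbitrary polynomial function is a polynomial in $S_2, S_3, \dots$ and the set of maps $(\p, \q) \mapsto g(\p \times \q)$ that are polynomial in $p_1, \dots, p_m, q_1, \dots, q_m$ is clearly closed under sums and products. Thus the crux is to show that for each fixed $m$ and each $n \geq 2$, the expression $S_n^{\p \times \q}$ is a polynomial in the indeterminates $p_1, \dots, p_m, q_1, \dots, q_m$.

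First I would compute $S_n^{\p \times \q}$ explicitly using the integral formula \eqref{eq:definicja-s}, namely $S_n^\lambda = (n-1) \iint_{\Box \in \lambda} (\contents_\Box)^{n-2}\, d\Box$. For the multirectangular diagram $\p \times \q$ drawn in the French convention, the region decomposes into finitely many axis-parallel rectangular strips whose corners have coordinates given by partial sums $P_j = p_1 + \cdots + p_j$ along the horizontal axis and $Q_j = q_j + q_{j+1} + \cdots$ (or the appropriate stacking) along the vertical axis. Since the content of a box at $(x,y)$ in the French convention is $\contents_\Box = x - y$, the integral $\iint (x-y)^{n-2}\, dx\, dy$ over each rectangle $[a_1, a_2] \times [b_1, b_2]$ evaluates, after integrating the polynomial integrand, to a polynomial expression in the coordinates $a_1, a_2, b_1, b_2$. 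These coordinates are themselves $\mathbb{Z}$-linear combinations of $p_1, \dots, p_m, q_1, \dots, q_m$, so the contribution of each strip, and hence the whole sum over the finitely many strips, is a polynomial in the $p_i$ and $q_i$.

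The step I expect to require the most care is the bookkeeping of the region of integration: I must verify that the decomposition of $\p \times \q$ into rectangles, and the identification of each corner coordinate as a linear combination of the $p_i$ and $q_i$, holds uniformly and does not depend on any order relations among the variables in a way that would break polynomiality. The point to watch is that the monotonicity assumption $q_1 \geq \cdots \geq q_m$ guarantees $\p \times \q$ is a genuine (generalized) diagram, but the \emph{algebraic} expression obtained from integrating $(x-y)^{n-2}$ over the prescribed rectangles is a single polynomial formula valid throughout the region where the variables are positive; no case distinctions on the relative sizes enter the final polynomial, because each strip is delimited by specific partial sums regardless of their numerical ordering. Once this is checked for $S_n$, the general case follows immediately by the closure of the polynomial property under the ring operations, completing the argument.
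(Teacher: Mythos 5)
Your proof is correct and takes essentially the same route as the paper: the paper's own proof consists precisely of the reduction to the generators $S_2,S_3,\dots$ of the algebra of polynomial functions and leaves the verification for $\F=S_n$ as an exercise, which you carry out by decomposing $\p\times\q$ into rectangular strips and integrating $(\contents_\Box)^{n-2}$ over each one. Your observation that the resulting expression is a single polynomial formula, with the ordering $q_1\geq\cdots\geq q_m$ needed only to ensure $\p\times\q$ is a genuine diagram, correctly settles the one subtle point of that exercise.
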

\begin{proof}
It is enough to prove this proposition for some family of generators of the
algebra of polynomial functions on $\Y$ for example for functionals
$S_2,S_3,\dots$. We leave it as an exercise.
\end{proof}

\begin{theorem}
\label{theo:stanley-and-S}
Let $\F:\Y\rightarrow\R$ be a polynomial function on the set of
generalized Young diagrams, we shall view it as a polynomial in $S_2,S_3,\dots$
Then for any $k_1,\dots,k_l\geq 2$
\begin{equation} 
\label{eq:rozwiniecie-w-esy-przez-stanleya}
\left. \frac{\partial}{\partial S_{k_1}} \cdots
\frac{\partial}{\partial S_{k_l}} \F \right|_{S_2=S_3=\cdots=0} = 
[p_1 q_1^{k_1-1} \cdots p_l q_l^{k_l-1}] \F (\p \times \q). 
\end{equation}
\end{theorem}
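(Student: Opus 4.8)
The plan is to reduce everything to understanding the single functionals $S_n$ on a multirectangular diagram and then to expand by the chain rule. First note that the substitution $\F\mapsto \F(\p\times\q)$ is a ring homomorphism: since $\F$ is by hypothesis a polynomial in $S_2,S_3,\dots$, the Stanley polynomial $\F(\p\times\q)$ is obtained simply by substituting, for each $n$, the polynomial $S_n^{\p\times\q}$ into $\F$. Next, extracting the coefficient of a monomial that is linear in each of $p_1,\dots,p_l$ and of degree $0$ in the remaining $p_i$ amounts to differentiation; indeed, for any polynomial in the $p_i$ and $q_i$,
$$[p_1 q_1^{k_1-1}\cdots p_l q_l^{k_l-1}]\,\F(\p\times\q) = [q_1^{k_1-1}\cdots q_l^{k_l-1}]\left(\frac{\partial}{\partial p_1}\cdots\frac{\partial}{\partial p_l}\F(\p\times\q)\right)\bigg|_{p_1=\cdots=p_m=0}.$$
So it suffices to compute this iterated $p$-derivative and then read off a coefficient in the $q$-variables.

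The heart of the matter is the expansion of $S_n^{\p\times\q}$. Working in the French convention, where $\contents_{(x,y)}=x-y$, the diagram $\p\times\q$ is the region whose width at height $y\in(P_{k-1},P_k)$ equals $q_k$, with $P_k=p_1+\cdots+p_k$ and $P_0=0$; hence by \eqref{eq:definicja-s}
$$S_n^{\p\times\q} = \sum_{k=1}^m \int_{P_{k-1}}^{P_k}\Big[(q_k-y)^{n-1}-(-y)^{n-1}\Big]\,dy.$$
Writing $\partial_{p_B}=\prod_{i\in B}\partial_{p_i}$ for a block $B\subseteq\{1,\dots,m\}$, a direct differentiation shows that the $k$-th summand contributes to $\partial_{p_B}S_n^{\p\times\q}\big|_{p=0}$ only when $k=\max B$: for $k>\max B$ every $p_i$ with $i\in B$ enters both integration limits $P_{k-1}$ and $P_k$, so the two contributions cancel at $p=0$. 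Consequently $\partial_{p_B}S_n^{\p\times\q}\big|_{p=0}$, which is the coefficient of the multilinear monomial $\prod_{i\in B}p_i$, is a polynomial in the single variable $q_{\max B}$; and for a singleton $B=\{i\}$ it equals $q_i^{\,n-1}$.

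Finally I would apply the multivariate chain rule to $\partial_{p_1}\cdots\partial_{p_l}\F(\p\times\q)$. This produces a sum over set-partitions $\pi$ of $\{1,\dots,l\}$, the term of $\pi$ being a product over its blocks $B$ of factors $\partial_{p_B}S_{n(B)}^{\p\times\q}$ (summed over the index choices $n(B)$) times the higher derivative $\frac{\partial^{|\pi|}\F}{\prod_B \partial S_{n(B)}}$, the latter evaluated where all $S_n^{\p\times\q}$ vanish, i.e.\ at $S_2=S_3=\cdots=0$. After setting $p=0$ each block-factor is, by the previous paragraph, a polynomial in the single variable $q_{\max B}$, so the whole $\pi$-term is a polynomial in $\{q_{\max B}:B\in\pi\}$. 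It can therefore contribute to the target monomial $q_1^{k_1-1}\cdots q_l^{k_l-1}$, which involves all of $q_1,\dots,q_l$ with positive exponents, only if $\{\max B:B\in\pi\}\supseteq\{1,\dots,l\}$; this forces $\pi$ to be the partition into singletons. For that partition the block-factors are exactly $q_i^{\,n_i-1}$, so extracting $[q_1^{k_1-1}\cdots q_l^{k_l-1}]$ pins down $n_i=k_i$ and leaves precisely $\frac{\partial^l \F}{\partial S_{k_1}\cdots\partial S_{k_l}}\big|_{S_2=S_3=\cdots=0}$, which is the claim.

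The one genuinely delicate point, and the step I expect to require the most care, is the cancellation at $p=0$ showing that the multilinear coefficient $[\prod_{i\in B}p_i]\,S_n^{\p\times\q}$ depends only on $q_{\max B}$; this is exactly what annihilates every partition except the finest one and makes the linear terms decouple. The rest is bookkeeping: the chain-rule expansion is purely formal since all objects are polynomials, and the evaluation point $S_2=S_3=\cdots=0$ appears automatically because $S_n^{\p\times\q}$ vanishes on the empty diagram $p=0$.
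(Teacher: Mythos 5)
Your proof is correct, but it takes a genuinely different route from the paper's. The paper differentiates in the $q$-directions: the key observation there is that $\frac{\partial}{\partial q_i} S_n^{\p\times\q}$ is the integral of $(n-1)x^{n-2}$ over the contents of the infinitesimal strip added to the diagram, an expression involving $q_i$ and the $p_j$'s but no $q_j$ with $j\neq i$; hence iterating the chain rule over $\partial_{q_1},\dots,\partial_{q_l}$ produces no cross terms at all (no partition sum appears), and one finishes by extracting the coefficient of $p_1\cdots p_m$ as the dominant term in the limit $\p\to 0$. You instead differentiate in the $p$-directions, which makes the full Fa\`a di Bruno sum over set partitions of $\{1,\dots,l\}$ unavoidable; you then need two extra ingredients to collapse it to the singleton partition: the cancellation showing that $\bigl[\prod_{i\in B}p_i\bigr]S_n^{\p\times\q}$ depends only on $q_{\max B}$, and the hypothesis $k_i\geq 2$, which forces every $q_i$ to occur with positive exponent in the target monomial. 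Both arguments are sound, and your delicate points (the cancellation at $p=0$, the evaluation at $S_2=S_3=\cdots=0$ coming from the empty diagram) are handled correctly. What your version buys is that it is purely formal coefficient extraction and is essentially the same mechanism the paper itself deploys later in the proof of Lemma \ref{lem:tozsamosci-wielomianow-stanleya} (your block computation recovers \eqref{eq:rozwiniecie-s-w-stanleya}), so your argument unifies the theorem with that lemma and would extend to the case where some $k_i=1$. What the paper's version buys is brevity --- no partition bookkeeping whatsoever --- and the geometric ``differential calculus on Young diagrams'' picture the authors emphasize, in which $\partial/\partial S_n$ is read off directly from an infinitesimal deformation of the shape.
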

\begin{proof}
Let $\p=(p_1,\dots,p_m)$, $\q=(q_1,\dots,q_m)$. For a given
index $i$ we consider a trajectory in the set of generalized Young diagrams
$q_i\mapsto (\p\times \q)$, where all other parameters $(p_j)$ and
$(q_j)_{j\neq i}$ are treated as constants. In the Russian convention we have
\begin{multline*} \left( \frac{\partial}{\partial q_i} (\p \times \q)
\right)(x )=\\ 
\begin{cases} 2 & \text{if } q_i-p_1-\cdots-p_i<x<
q_i-p_1-\cdots-p_{i-1},\\ 0 & \text{otherwise},\end{cases}
\end{multline*}
which shows the change of the contents distribution. From
\eqref{eq:definicja-s} it follows therefore
$$ \frac{\partial}{\partial q_i}  S_n^{\p\times
\q}= 
\int_{q_i-p_1-\cdots-p_i}^{q_i-p_1-\cdots-p_{i-1}}\ (n-1) x^{n-2} \ dx $$
and 
$$ \frac{\partial}{\partial q_i}  \F(\p\times \q)= 
\sum_{n\geq 2} \int_{q_i-p_1-\cdots-p_i}^{q_i-p_1-\cdots-p_{i-1}} 
\frac{\partial \F}{\partial S_n}\ (n-1) x^{n-2} \ dx. $$

By iterating the above argument we show that
\begin{multline*} \frac{\partial}{\partial q_1} \cdots \frac{\partial}{\partial
q_l} \F(\p\times \q)= \sum_{n_1,\dots,n_l\geq 2}
\frac{\partial}{\partial S_{n_1}} \cdots \frac{\partial}{\partial S_{n_l}}
\F(\p\times \q)\ \ \times \\
\int_{q_1-p_1}^{q_1} (n_1-1) x_1^{n_1-2}\ dx_1
% \int_{q_2-p_1-p2}^{q_2-p_1} (n_2-1) x_2^{n_1-2}\ dx_2
\cdots
\int_{q_l-p_1-\cdots-p_l}^{q_l-p_1-\cdots-p_{l-1}} 
(n_l-1) x_l^{n_l-2}\ dx_l.
\end{multline*}
We shall treat both sides of the above equality as polynomials in $\p$
and we will treat $\q$ as constants. We are going to compute the
coefficient of $p_1 \cdots p_m$ of both sides; we do this by computing the
dominant term of the right-hand side in the limit $\p\to 0$. It follows
that
\begin{multline*} [p_1 \cdots p_m] 
\frac{\partial}{\partial q_1} \cdots \frac{\partial}{\partial
q_l} \F(\p\times \q)=\\
\sum_{n_1,\dots,n_l\geq 2}
 \left. 
\frac{\partial}{\partial S_{n_1}} \cdots \frac{\partial}{\partial S_{n_l}}
\F(\p\times \q) \right|_{p_1=\cdots=p_l=0} \times \\ 
(n_1-1) q_1^{n_1-2} \cdots (n_l-1) q_l^{n_l-2}
\end{multline*}
which finishes the proof.
\end{proof}

\begin{corollary}
\label{coro:order-does-not-matter}
If $k_1,\dots,k_l\geq 2$ then 
$$ [p_1 q_1^{k_1-1} \cdots p_l q_l^{k_l-1}] \F (\p \times \q)$$ 
does not depend on the order of the elements of the sequence $(k_1,\dots,k_l)$.
\end{corollary}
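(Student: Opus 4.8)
The plan is to read off this corollary directly from Theorem \ref{theo:stanley-and-S}, which identifies the coefficient in question with a mixed partial derivative of $\F$. The key observation is that the left-hand side of \eqref{eq:rozwiniecie-w-esy-przez-stanleya}, namely
$$ \left. \frac{\partial}{\partial S_{k_1}} \cdots \frac{\partial}{\partial S_{k_l}} \F \right|_{S_2=S_3=\cdots=0}, $$
is manifestly symmetric in $(k_1,\dots,k_l)$. Indeed, since $\F$ is a polynomial function it may be written as a polynomial in the indeterminates $S_2,S_3,\dots$, and the operators $\frac{\partial}{\partial S_{k_i}}$ are then ordinary partial differentiation operators on a polynomial algebra. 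Such operators commute with one another, so the composite differential operator $\frac{\partial}{\partial S_{k_1}} \cdots \frac{\partial}{\partial S_{k_l}}$ is unchanged under any permutation of the sequence $(k_1,\dots,k_l)$; evaluating at $S_2=S_3=\cdots=0$ afterwards preserves this symmetry.

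First I would invoke Theorem \ref{theo:stanley-and-S} to rewrite the coefficient
$$ [p_1 q_1^{k_1-1} \cdots p_l q_l^{k_l-1}] \F(\p\times\q) $$
as the displayed mixed partial derivative. Then I would note that for any permutation $\rho$ of $\{1,\dots,l\}$ the sequence $(k_{\rho(1)},\dots,k_{\rho(l)})$ yields the same differential operator, hence the same value when applied to $\F$ and restricted to $S_2=S_3=\cdots=0$. Applying Theorem \ref{theo:stanley-and-S} once more, now in the reverse direction, this common value equals $[p_1 q_1^{k_{\rho(1)}-1} \cdots p_l q_l^{k_{\rho(l)}-1}] \F(\p\times\q)$, which is exactly the asserted independence of order.

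I do not anticipate a genuine obstacle here: the entire content is the commutativity of partial derivatives on a polynomial ring, and the translation between derivatives and Stanley-polynomial coefficients has already been established in Theorem \ref{theo:stanley-and-S}. The only minor point worth stating explicitly is that the condition $k_i\geq 2$ for all $i$ is precisely what guarantees that every permuted tuple still satisfies the hypotheses of Theorem \ref{theo:stanley-and-S}, so that the identity \eqref{eq:rozwiniecie-w-esy-przez-stanleya} is available for each ordering.
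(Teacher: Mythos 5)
Your proposal is correct and is precisely the argument the paper intends: the corollary is stated without proof because it follows immediately from Theorem \ref{theo:stanley-and-S} together with the commutativity of the partial derivatives $\frac{\partial}{\partial S_{k_i}}$ on the polynomial algebra in $S_2,S_3,\dots$. Your remark that the hypothesis $k_i\geq 2$ is exactly what keeps every permuted tuple within the scope of the theorem is a sensible explicit touch, but there is no difference in substance from the paper's reasoning.
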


\subsection{Identities fulfilled by coefficients of Stanley polynomials}
The coefficients of Stanley polynomials of the form $ [p_1 q_1^{k_1-1} \cdots
p_l q_l^{k_l-1}] \F (\p \times \q)$ with $q_1,\dots,q_{k_l}\geq 2$ have a
relatively simple structure, as it can be seen for example in Corollary
\ref{coro:order-does-not-matter}. In the following we will study the properties
of such coefficients if some of the numbers $k_1,\dots,k_l$ are equal to
$1$.

Let $\F:\Y\rightarrow\R$ be a fixed polynomial function. For a sequence
$(a_1,b_1),\dots,(a_m,b_m)$ of ordered pairs, where $a_1,\dots,a_m\geq 2$ and
$b_1,\dots,b_m\geq 1$ are integers we define an auxiliary quantity
\begin{multline*} 
\NN^{\F}_{(a_1,b_1)\dots,(a_m,b_m)}=\\ 
\left( \prod_r (-1)^{b_r-1}\ (a_r-1)_{(b_r-1)} \right)
 [p_1 q_1^{a_1-1} \cdots p_m q_m^{a_m-1}] \F(\p\times \q),
\end{multline*}
which thanks to Corollary \ref{coro:order-does-not-matter} does not depend on
the order of the elements in the tuple $(a_1,b_1),\dots,(a_m,b_m)$.

\begin{corollary} 
\label{coro:jak-policzyc-pochodne-po-R}
For any polynomial function $\F$ on the set of generalized
Young diagrams and $k_1,\dots,k_l\geq 2$
$$ \left. \frac{\partial}{\partial R_{k_1}} \cdots \frac{\partial}{\partial
R_{k_l}} \F\right|_{R_2=R_3=\cdots=0} = 
\sum_{\Pi\in P(1,2,\dots,l)}  (-1)^{l-|\Pi|}\ \NN^{\F}_{\left( \left( \sum_{i\in
b} k_i , |b|\right): b\in \Pi \right)}, $$ 
where the sum runs over all partitions of $\{1,\dots,l\}$.
\end{corollary}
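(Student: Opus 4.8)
The plan is to interpret both sides through the change of variables between the free cumulants $R_2,R_3,\dots$ and the functionals $S_2,S_3,\dots$, and to recognize the right-hand side as an instance of the multivariate Faà di Bruno formula for the composition of $\F$, viewed as a polynomial in $S_2,S_3,\dots$, with the polynomial map $R\mapsto S(R)$ supplied by \eqref{eq:s-r}.

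First I would rewrite the right-hand side using Theorem \ref{theo:stanley-and-S}. Applying it to each factor gives
$$[p_1 q_1^{a_1-1}\cdots p_m q_m^{a_m-1}]\F(\p\times\q) = \left.\frac{\partial^m \F}{\partial S_{a_1}\cdots\partial S_{a_m}}\right|_{S_2=S_3=\cdots=0},$$
so that for a partition $\Pi$ whose blocks $b$ have sizes $|b|$ and block-sums $\sum_{i\in b}k_i$,
$$\NN^{\F}_{\left((\sum_{i\in b}k_i,\,|b|):\,b\in\Pi\right)} = \left(\prod_{b\in\Pi}(-1)^{|b|-1}\left(\Bigl(\sum_{i\in b}k_i\Bigr)-1\right)_{(|b|-1)}\right)\left.\frac{\partial^{|\Pi|}\F}{\prod_{b\in\Pi}\partial S_{\sum_{i\in b}k_i}}\right|_{S=0}.$$
Since $\prod_{b\in\Pi}(-1)^{|b|-1}=(-1)^{\sum_b(|b|-1)}=(-1)^{l-|\Pi|}$, the prefactor $(-1)^{l-|\Pi|}$ in the statement exactly cancels the sign coming from $\NN^{\F}$, and the right-hand side of the corollary simplifies to
$$\sum_{\Pi\in P(1,\dots,l)}\left(\prod_{b\in\Pi}\left(\Bigl(\sum_{i\in b}k_i\Bigr)-1\right)_{(|b|-1)}\right)\left.\frac{\partial^{|\Pi|}\F}{\prod_{b\in\Pi}\partial S_{\sum_{i\in b}k_i}}\right|_{S=0}.$$

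Next I would identify the falling-factorial coefficients with inner derivatives of the $S_n$ in the $R_k$. Using that $R_1=S_1=0$ for transition measures of Young diagrams together with \eqref{eq:s-r}, I compute for any block $b$ and any $n\geq 2$
$$\left.\frac{\partial^{|b|}S_n}{\prod_{i\in b}\partial R_{k_i}}\right|_{R=0} = \begin{cases}\left(\Bigl(\sum_{i\in b}k_i\Bigr)-1\right)_{(|b|-1)} & \text{if } n=\sum_{i\in b}k_i,\\ 0 & \text{otherwise.}\end{cases}$$
Indeed, at $R=0$ only the homogeneous component of $S_n$ of degree $|b|$ in the $R$'s survives the $|b|$-fold differentiation; the falling factorial $(n-1)_{|b|-1}$ is precisely the coefficient appearing in \eqref{eq:s-r}, and the symmetry factor $1/|b|!$ is cancelled when one extracts the relevant monomial.

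Finally I would invoke the multivariate Faà di Bruno formula for $\F(R)=\F\bigl(S(R)\bigr)$, evaluated at $R=0$ (where also $S=0$). Grouping the differentiation indices $\{1,\dots,l\}$ according to which inner factor $S_{n_b}$ each one hits produces a sum over set partitions $\Pi$, with each block $b$ contributing the inner derivative above and the outer derivative $\partial^{|\Pi|}\F/\prod_b\partial S_{n_b}$. Because those inner derivatives vanish unless $n_b=\sum_{i\in b}k_i$, the auxiliary sum over the outer indices collapses to the single admissible assignment, and what remains is exactly the simplified right-hand side from the first step; this proves the claim. I expect the main technical point to be the clean bookkeeping of the Faà di Bruno expansion — namely justifying that only finitely many terms contribute (which holds since $\F$ is a polynomial and hence involves only finitely many $S_n$) and that the off-diagonal vanishing of the inner derivatives removes all the spurious index summations, leaving precisely one term per partition.
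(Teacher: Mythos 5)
Your proof is correct and takes essentially the same route as the paper: the paper's own proof is the one-line remark that it suffices to combine Theorem \ref{theo:stanley-and-S} with Equation \eqref{eq:s-r}, and your argument is precisely the detailed execution of that plan --- rewriting each $\NN^{\F}$ as an $S$-derivative, noting the sign cancellation $(-1)^{l-|\Pi|}\prod_{b\in\Pi}(-1)^{|b|-1}=1$, computing the inner derivatives $\bigl(\sum_{i\in b}k_i-1\bigr)_{(|b|-1)}$ from \eqref{eq:s-r}, and assembling everything via the multivariate chain rule over set partitions.
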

\begin{proof}
It is enough to use Theorem \ref{theo:stanley-and-S} and Equation
\eqref{eq:s-r}.
\end{proof}

\begin{lemma}
\label{lem:tozsamosci-wielomianow-stanleya}
For any polynomial function $\F:\Y\rightarrow\R$ and any sequence of
integers $k_1,\dots,k_m\geq 1$
$$  [p_1 q_1^{k_1-1} \cdots p_m q_m^{k_m-1}] \F(\p\times \q)
= \sum_{\Pi}  \NN^{\F}_{\left( \left( \sum_{i\in b} k_i ,
|b|\right)  : b\in \Pi \right)}, $$ 
where the sum runs over all partitions $\Pi$ of the set $\{1,\dots,m\}$ with a
property that if\/ $(a_1,\dots,a_l)$ with $a_1<\cdots<a_l$ is a block of $\Pi$
then $k_{a_1}=\cdots=k_{a_{l-1}}=1$ and $k_{a_l}\geq 2$ or, in other words, the
set of rightmost legs of the blocks of $\Pi$ coincides with the set of indices
$i$ such that $k_i\geq 2$.
\end{lemma}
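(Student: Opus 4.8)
The plan is to reduce everything to the explicit Stanley polynomials of the individual functionals $S_n$ and then to read off the claimed partition sum directly. Since both sides of the asserted identity are linear in $\F$, it suffices to prove it when $\F$ is a monomial $\prod_v S_{n_v}$ in the fundamental functionals; the general case follows by linearity.

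First I would record the strip decomposition of $S_n(\p\times\q)$. Writing $P_j=p_1+\cdots+p_j$ (with $P_0=0$), the boxes of the $i$-th rectangle occupy heights between $P_{i-1}$ and $P_i$, so by \eqref{eq:definicja-s} a direct integration gives $S_n(\p\times\q)=\frac1n\sum_i\bigl[(q_i-P_{i-1})^n-(q_i-P_i)^n-(-P_{i-1})^n+(-P_i)^n\bigr]$. The crucial structural consequence is that every monomial of $S_n(\p\times\q)$ contains exactly one variable $q_i$, occurring to a strictly positive power. Extracting the coefficient of $q_i^a$ (for $a\ge 1$) and then its multilinear (square-free) part in the $p$-variables, one finds that this part is $-\,c!\sum p_i p_{l_1}\cdots p_{l_{c-1}}\cdot\frac1n\binom{n}{a}(-1)^{n-a}$ with $c=n-a$, the sum running over increasing sequences of indices strictly smaller than $i$; in particular it always carries the factor $p_i$ attached to its own strip index, with scalar $\frac{(n-1)!}{a!}(-1)^{n-a+1}$.

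Second, I would expand $\F(\p\times\q)=\prod_v S_{n_v}(\p\times\q)$ and extract the coefficient of $p_1 q_1^{k_1-1}\cdots p_m q_m^{k_m-1}$. Because each factor supplies exactly one $q$-variable to a positive power, and because the coefficient of $q_i^a$ in a single factor necessarily carries its own $p_i$, multilinearity in the $p$-variables forces exactly one factor to be assigned to each index $i$ with $k_i\ge 2$ and forbids any factor being assigned to an index with $k_i=1$. A factor assigned to such an index $i$ then ``absorbs'' some further indices of smaller label, which are forced to be exactly the indices with value $k=1$ (those with $k\ge2$ supply their own $p$ and cannot be supplied twice). This is precisely the data of a set partition $\Pi$ of $\{1,\dots,m\}$ whose blocks each have a unique largest element with $k\ge2$ (the rightmost leg) and all other elements equal to $1$ -- exactly the partitions indexing the right-hand side. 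Tracking the functional attached to a block $b$ shows it must be $S_{n_b}$ with $n_b=\sum_{i\in b}k_i$, matching the subscript of $\NN$.

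Finally I would match scalars. For a block $b$ with rightmost leg $j$ the attached factor supplies $q_j^{\,k_j-1}$, so $a=k_j-1=n_b-|b|$ and $c=|b|$; together with the Faà di Bruno bookkeeping, which reproduces $\partial^{|\Pi|}\F/\prod_{b}\partial S_{n_b}$ at $S=0$ and hence, via Theorem~\ref{theo:stanley-and-S} and Corollary~\ref{coro:order-does-not-matter}, the coefficient $[p_1q_1^{n_b-1}\cdots]\F$ appearing in $\NN$, the per-block scalar becomes $\frac{(n_b-1)!}{(n_b-|b|)!}(-1)^{|b|-1}=(n_b-1)_{|b|-1}(-1)^{|b|-1}$. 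This is exactly the normalizing factor in the definition of $\NN^{\F}_{((n_b,|b|):b)}$, so the two sums agree term by term. The main obstacle is the structural step: establishing that the multilinear coefficient of a single $S_n$ always ties the power of $q_i$ to its own $p_i$, since it is this rigidity that forces each block to contain exactly one index with $k\ge2$ and thereby singles out the admissible partitions; once this is in place, the remaining matching of falling factorials and signs is routine.
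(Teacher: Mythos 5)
Your proposal is correct and takes essentially the same route as the paper: your structural formula for the square-free-in-$\p$ part of $S_n(\p\times\q)$ (each power of $q_i$ rigidly tied to its own $p_i$ together with strictly smaller indices, with scalar $(-1)^{r-1}(n-1)_{r-1}$) is precisely the paper's key identity \eqref{eq:rozwiniecie-s-w-stanleya}, and your subsequent clustering of the factors $p_1\cdots p_m$ into blocks indexed by admissible partitions, finished off via Theorem \ref{theo:stanley-and-S}, is exactly the paper's concluding argument. The only divergence is cosmetic: you derive the key identity by explicit strip-by-strip integration and binomial extraction, whereas the paper obtains it by interpreting the content integrals as volumes of sets of increasing coordinates.
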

\begin{proof}
We shall treat $\F(\p\times \q)$ as a polynomial in
$\p$ and we shall treat $\q$ as constants. Our goal is to
understand the coefficient $[p_1  \cdots p_m ] \F(\p\times
\q)$. Since $\F$ is a polynomial in $S_2,S_3,\dots$ we are also
going to investigate analogous coefficients for $\F=S_n$.

For the purpose of the following calculation we shall use the French notation.
\begin{multline*} S_n(\lambda)= (n-1) \iint (\contents_\Box)^{n-2} \ d\Box =\\
(n-2)! \sum_{1\leq r\leq n-1} (-1)^{r-1} \iint_{(x,y)\in\lambda}
\frac{x^{n-1-r}}{(n-1-r)!} \frac{y^{r-1}}{(r-1)!}\ dx \ dy.
\end{multline*}
Since the integral 
$$ \iint_{(x,y)\in\lambda}
\frac{x^{n-1-r}}{(n-1-r)!} \frac{y^{r-1}}{(r-1)!}\ dx \ dy$$
can be interpreted as the volume of the set
\begin{multline*} \big\{ (x_1,\dots,x_{n-r},    y_1,\dots,y_{r}):
0<x_1<\cdots<x_{n-r} \text{ and } \\ 0<y_1<\cdots<y_{r} \text{ and }
(x_{n-r},y_{r})\in \lambda \big\}
\end{multline*}
therefore for any $i_1<\cdots<i_r$
\begin{equation}
\label{eq:rozwiniecie-s-w-stanleya}
 [p_{i_1} \cdots p_{i_r}] S_n(\p\times \q)= 
% % % % % % % % (n-1)
% % % % % % % % \binom{n-2}{r} r! q_{i_r}^{n-1-r}=
% % % % % % % % 
% % % % % % % % 
(-1)^{r-1}\ (n-1)_{r-1}\ q_{i_r}^{n-r}.
% \begin{cases} 
% (-1)^{r-1}\ (n-1)_{r-1}\ q_{i_r}^{n-r} & \text{for }  1\leq r\leq n-1, \\
% 0      & \text{for } r\geq n.
% \end{cases}
\end{equation}

We express $\F$ as a polynomial in $S_2,S_3,\dots$. Notice that the
monomial $p_1 \dots p_m$ can arise in $\F(\p\times \q)$
only in the following way: we cluster the factors $p_1\cdots p_m$ in all
possible ways or, in other words, we consider all partitions $\Pi$ of the set
$\{1,\dots,m\}$. Each block of such a partition corresponds to one factor $S_n$
for some value of $n$. Thanks to Equation
\eqref{eq:rozwiniecie-s-w-stanleya} we can compare the factors $q_1,\dots,q_m$
which appear with a non-zero exponent and see that only partitions $\Pi$ which
contribute are as prescribed in the formulation of the lemma; furthermore we
can find the correct value of $n$ for each block of $\Pi$.

Equation
\eqref{eq:rozwiniecie-w-esy-przez-stanleya} finishes the proof.
\end{proof}

\subsection{Stanley-Féray character formula}

% We denote by $\Sym(n)^{(r)}$ the set of \emph{colored permutations}, i.e.\
% pairs $(\sigma,\phi)$, where $\sigma\in \Sym(l)$ and
% $\phi:\{1,\dots,n\}\rightarrow
% \{1,\dots,r\}$ is a function constant on each cycle of $\sigma$.
% Alternatively,
% the coloring $\phi$ can be regarded as a function on $C(\sigma)$, 
% the set cycles of $\sigma$.
% Given a colored permutation $(\sigma,\phi)\in \Sym(n)^{(r)}$ and a non-colored
% one $\pi\in \Sym(n)$ we define their product $(\sigma,\phi) \cdot \pi=
% (\sigma \circ \pi, \psi)$, where the coloring $\psi$ is defined by
% \begin{equation}
% \label{eq:psi}
% \psi(c) = \max_{a\in c} \phi(a),  
% \end{equation}
%  where $c\in C(\sigma \circ \pi)$.

The following result was conjectured by Stanley \cite{Stanley-preprint2006} and
proved by Féray \cite{F'eray-preprint2006}
and therefore we refer to it as Stanley-Féray character formula. 
For a more elementary proof we refer to \cite{F'eray'Sniady-preprint2007}.
\begin{theorem}
% [The original formulation of Stanley-Féray character formula]
\label{theo:stanley-feray-old}
The value of the normalized character on $\pi\in \Sym(n)$ for a
multirectangular Young diagram $\p\times\q$\/ for\/ $\p=(p_1,\dots,p_r)$,
$\q=(q_1,\dots,q_r)$ is given by 
\begin{equation}
\label{eq:stanley-feray-old}
%  \Sigma^{\p\times \q}_{\pi}=(-1)^n \sum_{(\sigma,\phi)\in
% \Sym(n)^{(r)}}
% \left[\prod_{b\in C(\sigma)} p_{\phi(b)} \prod_{c\in C(\sigma\circ \pi)}
% -q_{\psi(c)}\right],  
% 
% 
\Sigma^{\p\times \q}_{\pi}=
\sum_{\substack{\sigma_1,\sigma_2\in\Sym(n)\\ \sigma_1 \circ \sigma_2=\pi}}
\ \sum_{\phi_2:C(\sigma_2)\rightarrow\{1,\dots,r\}}
(-1)^{\sigma_1} \left[\prod_{b\in C(\sigma_1)} q_{\phi_1(b)} \prod_{c\in
C(\sigma_2)}
p_{\phi_2(c)}\right],
\end{equation}
where $\phi_1:C(\sigma_1)\rightarrow\{1,\dots,r\}$ is defined by 
$$ \phi_1(c) = \max_{\substack{b\in C(\sigma_2), \\ \text{$b$ and $c$
intersect}}} \phi_2(b).$$

\end{theorem}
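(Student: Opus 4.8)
The plan is to reduce \eqref{eq:stanley-feray-old} to a shape-independent combinatorial identity for characters and then to establish that identity. First I would note that both sides are polynomials in $p_1,\dots,p_r,q_1,\dots,q_r$: the right-hand side manifestly so, and the left-hand side because $\Sigma_\pi$ is a polynomial function on $\Y$, so $\Sigma^{\p\times\q}_\pi$ is a Stanley polynomial by the Stanley-polynomial Proposition of Section~\ref{sec:stanley}. Hence it suffices to verify the equality when $p_1,\dots,p_r,q_1,\dots,q_r$ are positive integers with $q_1\geq\cdots\geq q_r$; for such values $\p\times\q$ is a genuine partition of $n=\sum_j p_j q_j$ and $\Sigma^{\p\times\q}_\pi$ is the honest normalized character.

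The heart of the argument is a combinatorial character formula that I would isolate as the main lemma: for every partition $\lambda\vdash n$ and every $\pi\in\Sym(n)$,
\begin{equation*}
\Sigma^{\lambda}_\pi=\sum_{\substack{\sigma_1,\sigma_2\in\Sym(n)\\\sigma_1\circ\sigma_2=\pi}}(-1)^{\sigma_1}\,N^{\lambda}(\sigma_1,\sigma_2),
\end{equation*}
where $N^{\lambda}(\sigma_1,\sigma_2)$ is the number of ways to attach a row of $\lambda$ to each cycle of $\sigma_2$ and a column of $\lambda$ to each cycle of $\sigma_1$ so that, for every $x\in\{1,\dots,n\}$, the box lying in the row attached to the $\sigma_2$-cycle of $x$ and in the column attached to the $\sigma_1$-cycle of $x$ belongs to $\lambda$. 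The point is that this formula is structurally independent of $\lambda$: all the geometry of the diagram is packed into $N^\lambda$.

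Granting the lemma, the theorem follows by a direct count of $N^{\lambda}(\sigma_1,\sigma_2)$ for a multirectangular $\lambda$. Choosing a row of $\lambda$ for a cycle $c\in C(\sigma_2)$ amounts to choosing the index $\phi_2(c)\in\{1,\dots,r\}$ of the rectangle containing that row, together with one of the $p_{\phi_2(c)}$ rows inside it. For a cycle $b\in C(\sigma_1)$ the admissible columns are exactly those of index $k$ with $k\leq q_{\phi_2(c)}$ for every $\sigma_2$-cycle $c$ meeting $b$; since $q_1\geq\cdots\geq q_r$, this is the single condition $k\leq q_{\phi_1(b)}$ with $\phi_1(b)=\max_{c\cap b\neq\emptyset}\phi_2(c)$, leaving exactly $q_{\phi_1(b)}$ choices. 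Multiplying these counts over all cycles gives $N^{\lambda}(\sigma_1,\sigma_2)=\sum_{\phi_2}\prod_{c}p_{\phi_2(c)}\prod_{b}q_{\phi_1(b)}$, and substituting into the lemma yields exactly \eqref{eq:stanley-feray-old}.

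The main obstacle is the combinatorial character formula itself, and in particular pinning down the alternating sign $(-1)^{\sigma_1}$ and the normalization $n^{\downarrow k}$. I would attack it through symmetric functions: starting from the Frobenius relation $\chi^\lambda_\mu=\langle p_\mu,s_\lambda\rangle$, I would use a specialization of $s_\lambda$ tailored to the row/column bookkeeping above, and then convert the resulting Jacobi--Trudi (determinantal) expression into a sum over factorizations $\sigma_1\circ\sigma_2=\pi$ by a Lindström--Gessel--Viennot-style sign-reversing involution, the expectation being that such an involution is precisely what manufactures the sign $(-1)^{\sigma_1}$. An alternative route would be to verify the lemma first in Stanley's rectangular case $r=1$, where $N^{\lambda}=p^{|C(\sigma_2)|}q^{|C(\sigma_1)|}$, and then to propagate it to arbitrary multirectangular shapes by induction on the number of rectangles; this is closer to Féray's original approach, and the delicate part is again controlling signs when gluing rectangles.
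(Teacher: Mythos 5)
This theorem is not proved in the paper at all: it is imported as an external result, conjectured by Stanley \cite{Stanley-preprint2006}, proved by Féray \cite{F'eray-preprint2006}, with an elementary proof in \cite{F'eray'Sniady-preprint2007}. So your proposal must be measured against those proofs. Your reduction to positive-integer $(\p,\q)$ via polynomiality, and your count of $N^{\lambda}(\sigma_1,\sigma_2)$ for a multirectangular $\lambda$ (rows of $\p\times\q$ chosen per $\sigma_2$-cycle, columns per $\sigma_1$-cycle, with $q_1\geq\cdots\geq q_r$ turning the compatibility constraints into the single condition indexed by $\phi_1(b)=\max\phi_2(c)$), are both correct, and this is exactly how the multirectangular formula is deduced from the embedding-count formula in \cite{F'eray'Sniady-preprint2007}. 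The genuine gap is that your ``main lemma'' is not a stepping stone of lesser difficulty: every Young diagram \emph{is} multirectangular (take the distinct row lengths as $q_1>\cdots>q_r$ and their multiplicities as $p_1,\dots,p_r$), so the lemma for arbitrary $\lambda$ and the theorem are equivalent statements, related by precisely the counting you carried out. Your proposal therefore reformulates the theorem rather than proving it, and the entire difficulty --- producing the sign $(-1)^{\sigma_1}$ and the normalization --- is deferred to two speculative attacks (Jacobi--Trudi plus a Lindström--Gessel--Viennot-type involution, or induction on the number of rectangles) neither of which is carried out or even argued to converge.

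There is a second, more technical flaw: you state the lemma only for $\lambda\vdash n$, i.e.\ for diagrams with exactly as many boxes as the support of $\pi$, and then apply it to $\p\times\q$ with $\sum_j p_jq_j$ boxes. The two uses of $n$ are incompatible: in the theorem $\pi\in\Sym(n)$ is fixed while the diagram is arbitrary, and for $|\lambda|>n$ the left-hand side carries the falling-factorial prefactor $|\lambda|(|\lambda|-1)\cdots(|\lambda|-n+1)$ coming from the definition of $\Sigma^{\lambda}_{\pi}$. Your polynomiality argument requires agreement on a Zariski-dense set of integer points $(\p,\q)$, but the points with $\sum_j p_jq_j=n$ lie on a hypersurface and are not dense, so verifying the identity only for diagrams of size $n$ proves nothing about the Stanley polynomial. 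The lemma must be stated and proved for diagrams of arbitrary size $N\geq n$, with $N^{\lambda}(\sigma_1,\sigma_2)$ still counting attachments indexed by the $n$-element support of $\pi$; this is exactly the form established in \cite{F'eray'Sniady-preprint2007}, and it is the statement you would actually have to prove.
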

% The left-hand side of the above equation makes sense if
% $p_1,p_2,\dots,q_1,q_2,\dots$ are natural numbers hence $\p\times\q$ is a
% Young diagram, but the right-hand side can be used to define the left-hand
% side for general multirectangular Young diagrams.

Interestingly, the above theorem shows that some partial information about the
family of graphs $(\V^{\sigma_1,\sigma_2})_{\sigma_1,\sigma_2}$ can be
extracted from the coefficients of Stanley polynomial $\Sigma^{\p\times
\q}_{\pi}$. This observation will be essential for the proof of the main result.

The following result is a simple corollary from Theorem
\ref{theo:stanley-feray-old} and it was proved by Féray
\cite{F'eray-preprint2008}.
\begin{theorem}
\label{theo:stanley-feray-for-cumulants}
For any integers $k_1,\dots,k_l\geq 1$ the value of the cumulant
$\kappa^{\id}(\Sigma_{k_1},\dots,\Sigma_{k_l})$ evaluated at the Young diagram
$\p\times \q$ is given by
\begin{multline*}
\kappa^{\id}{}^{\ \p\times \q}(\Sigma_{k_1},\dots,\Sigma_{k_l}) =\\
\sum_{\substack{\sigma_1,\sigma_2\in\Sym(n)\\ \sigma_1 \circ \sigma_2=\pi \\  
\langle \sigma,\pi\rangle \text{ transitive}}}
\ \sum_{\phi_2:C(\sigma_2)\rightarrow\{1,\dots,r\}}
(-1)^{\sigma_1}
\left[\prod_{b\in C(\sigma_1)} q_{\phi_1(b)} \prod_{c\in C(\sigma_2)}
p_{\phi_2(c)}\right],
\end{multline*}
where $n=k_1+\cdots+k_l$ and $\pi$ is a fixed permutation with the cycle
structure $k_1,\dots,k_l$, for example
$\pi=(1,2,\dots,k_1)(k_1+1,k_1+2,\dots,k_1+k_2)\cdots $, and where $\phi_1$ is
as in Theorem \ref{theo:main}.
% (k_1+\cdots+k_{l-1}+1,\cdots,k_1+\cdots+k_l)$.
\end{theorem}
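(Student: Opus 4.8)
The plan is to deduce this from the Stanley–Féray formula (Theorem \ref{theo:stanley-feray-old}) together with the defining property of the cumulants $\kappa^{\id}$, exploiting the principle that \emph{transitive} factorizations are the ``connected'' objects underlying the moment--cumulant inversion. Recall that the family $\kappa^{\id}$ is characterized by the relations displayed before Conjecture \ref{conj:rattan-sniady}; concretely, for any $k_1,\dots,k_l\geq 1$,
$$ \Sigma_{k_1,\dots,k_l} = \sum_{\Pi} \prod_{b\in\Pi} \kappa^{\id}\big( (\Sigma_{k_i})_{i\in b} \big), $$
the sum running over all set partitions $\Pi$ of $\{1,\dots,l\}$, and this relation determines $\kappa^{\id}$ uniquely by Möbius inversion on the partition lattice. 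So it suffices to produce a decomposition of $\Sigma^{\p\times\q}_{\pi}$ of exactly this shape, in which the ``connected'' contribution of each block is the transitive Stanley–Féray sum.

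First I would apply Theorem \ref{theo:stanley-feray-old} to $\Sigma_{k_1,\dots,k_l}^{\p\times\q}=\Sigma_\pi^{\p\times\q}$, where $\pi$ has cycle structure $k_1,\dots,k_l$, writing it as a sum over factorizations $\sigma_1\circ\sigma_2=\pi$. The key geometric observation is that, since $\pi\in\langle\sigma_1,\sigma_2\rangle$, every orbit of $\langle\sigma_1,\sigma_2\rangle$ on $\{1,\dots,n\}$ is a union of cycles of $\pi$; hence each factorization induces a set partition $\Pi$ of the index set $\{1,\dots,l\}$ of the cycles of $\pi$, and restricting $\sigma_1,\sigma_2$ to each orbit yields a \emph{transitive} factorization of the corresponding sub-permutation $\pi_b$. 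Grouping the Stanley–Féray sum according to this induced partition gives
$$ \Sigma_{k_1,\dots,k_l}^{\p\times\q} = \sum_{\Pi} \prod_{b\in\Pi} T_b, $$
where $T_b$ denotes the Stanley–Féray-type sum taken only over transitive factorizations of $\pi_b$ (noting that $\langle\sigma_1,\sigma_2\rangle=\langle\sigma_1,\pi\rangle$, matching the transitivity condition in the statement).

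What remains, and what I expect to be the main obstacle, is verifying that the Stanley–Féray summand genuinely factors as a product over the orbit blocks. The sign $(-1)^{\sigma_1}$ factors because the signature of a permutation is the product of the signatures of its restrictions to invariant subsets. Each cycle of $\sigma_1$ or $\sigma_2$ lies in exactly one orbit, so the exponents of the $p$'s and $q$'s split across blocks; the delicate point is the auxiliary map $\phi_1$, defined by maximizing $\phi_2$ over the cycles of $\sigma_2$ that intersect a given cycle of $\sigma_1$. One must check that this maximum never reaches across distinct orbits, which holds because two intersecting cycles share a point and therefore lie in the same orbit, so $\phi_1$ is computed entirely within a block. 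Finally, the summation over $\phi_2:C(\sigma_2)\to\{1,\dots,r\}$ splits into independent summations over the restrictions of $\phi_2$ to the $\sigma_2$-cycles of each block, which is what makes each $T_b$ a well-defined block contribution.

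Having established the displayed product formula, I would conclude by comparing it termwise with the moment--cumulant relation and invoking the uniqueness of $\kappa^{\id}$: since $T_b$ depends only on the sub-permutation $\pi_b$ (equivalently on the sequence $(k_i)_{i\in b}$, in a symmetric fashion matching $\kappa^{\id}$), Möbius inversion forces $T_{\{1,\dots,l\}}=\kappa^{\id}(\Sigma_{k_1},\dots,\Sigma_{k_l})$. Spelling out $T_{\{1,\dots,l\}}$ is precisely the claimed formula, with the transitivity restriction on $\langle\sigma_1,\sigma_2\rangle$.
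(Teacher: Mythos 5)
Your proof is correct, and it is precisely the argument the paper alludes to when it calls this result ``a simple corollary from Theorem \ref{theo:stanley-feray-old}'' (the paper itself omits the details, citing F\'eray): one groups the Stanley--F\'eray sum for $\Sigma^{\p\times\q}_{k_1,\dots,k_l}$ according to the orbit partition of $\langle\sigma_1,\sigma_2\rangle$, checks that the sign, the $p$- and $q$-weights, and the map $\phi_1$ all factor over orbits, and then identifies the transitive block sums with $\kappa^{\id}$ by M\"obius inversion of the moment--cumulant relation. The two points you flag as delicate --- that $\phi_1$ never reaches across orbits, and that the block contribution depends only on the cycle lengths $(k_i)_{i\in b}$ --- are exactly the right ones to verify, and your verification is sound.
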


\section{Toy example: Quadratic terms of Kerov polynomials}
\label{sec:toyexample}
We are on the way towards the proof of Theorem \ref{theo:main} which,
unfortunately, is a bit technically involved. Before dealing with the
complexity of the general case we shall present in this section a proof of
Theorem \ref{theo:quadratic} which concerns a simplified situation in which we
are interested in quadratic terms of Kerov polynomials. This case is
sufficiently complex to show the essential elements of the complete proof of
Theorem \ref{theo:main} but simple enough not to overwhelm the Reader with
unnecessary difficulties.

We shall prove Theorem \ref{theo:quadratic} in the following equivalent form:
\begin{theorem}
\label{theo:quadratic-reformulated}
For all integers $l_1,l_2 \geq 2$ and $k\geq 1$ the derivative 
$$  \left. \frac{\partial^2}{\partial R_{l_1} \partial R_{l_2} } K_{k}
\right|_{R_2=R_3=\cdots=0}$$ 
is equal to the number of triples $(\sigma_1,\sigma_2,q)$ with the following
properties:
\begin{enumerate}[label=(\alph*)]
 \item \label{enum:quadratic-a} 
$\sigma_1,\sigma_2$ is a factorization of the
cycle; in other words $\sigma_1,\sigma_2\in \Sym(k)$ are such that $\sigma_1
\circ \sigma_2=(1,2,\dots,k)$;
 \item $\sigma_2$ consists of two cycles and $\sigma_1$ consists of $l_1+l_2-2$
cycles;
 \item 
\label{enum:quadratic-c} 
$\ell:C(\sigma_2)\rightarrow \{1,2\}$ is a bijective labeling of the two cycles
of $\sigma_2$;
 \item for each cycle $c\in C(\sigma_2)$ there are at least $l_{\ell(c)}$ cycles
of $\sigma_1$ which intersect nontrivially $c$.
\end{enumerate}
\end{theorem}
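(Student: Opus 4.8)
The plan is to combine the machinery developed earlier in the paper: the Stanley--Féray character formula (Theorem~\ref{theo:stanley-feray-old}), the differentiation-to-coefficient dictionary (Theorem~\ref{theo:stanley-and-S} and Corollary~\ref{coro:jak-policzyc-pochodne-po-R}), and the Stanley-polynomial identities (Lemma~\ref{lem:tozsamosci-wielomianow-stanleya}). Since we are extracting the coefficient of $R_{l_1}R_{l_2}$ in $K_k$, which for distinct $l_1,l_2$ equals $\frac{\partial^2 K_k}{\partial R_{l_1}\partial R_{l_2}}\big|_{R=0}$ up to an obvious symmetry factor, the reformulated statement is the natural object to attack. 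First I would apply Corollary~\ref{coro:jak-policzyc-pochodne-po-R} with $l=2$ to write this second derivative as an alternating sum over partitions of $\{1,2\}$ of the quantities $\NN^{\Sigma_k}$; there are exactly two such partitions (the one with two singleton blocks and the one with a single two-element block), so we obtain
\begin{equation*}
\left.\frac{\partial^2 K_k}{\partial R_{l_1}\partial R_{l_2}}\right|_{R=0}
= \NN^{\Sigma_k}_{(l_1,1),(l_2,1)} - \NN^{\Sigma_k}_{(l_1+l_2,2)}.
\end{equation*}

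The next step is to interpret each $\NN$ term through the Stanley--Féray formula. By definition $\NN^{\Sigma_k}_{(l_1,1),(l_2,1)}$ is just the coefficient $[p_1 q_1^{l_1-1} p_2 q_2^{l_2-1}]\Sigma_k^{\p\times\q}$, so I would expand $\Sigma_k^{\p\times\q}$ using \eqref{eq:stanley-feray-old} and read off which factorizations $\sigma_1\circ\sigma_2=(1,2,\dots,k)$ produce this monomial. Because each variable $p_i$ appears exactly once and there are exactly two of them, the contributing factorizations must have $\sigma_2$ consisting of exactly two cycles, each colored by a distinct value in $\{1,2\}$ via $\phi_2$ (this is the bijective labeling $\ell$ of condition~\ref{enum:quadratic-c}); tracking the exponents $q_1^{l_1-1}$ and $q_2^{l_2-1}$ together with the definition $\phi_1(c)=\max_{b}\phi_2(b)$ forces $\sigma_1$ to have exactly $l_1+l_2-2$ cycles and, crucially, determines the sign. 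The sign $(-1)^{\sigma_1}$ should combine with the prefactor $(-1)^{b_r-1}(a_r-1)_{(b_r-1)}$ in the definition of $\NN$ (here all $b_r=1$, so this prefactor is trivial) to yield a genuinely positive count, and the constraint that each cycle $c$ of $\sigma_2$ meets at least $l_{\ell(c)}$ cycles of $\sigma_1$ emerges from the requirement that the monomial can actually be realized given how $\phi_1$ is defined as a maximum over neighbors.

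The heart of the argument, and the step I expect to be the main obstacle, is the second term $\NN^{\Sigma_k}_{(l_1+l_2,2)} = -(l_1+l_2-1)\,[p_1 q_1^{l_1+l_2-1}]\Sigma_k^{\p\times\q}$, which corresponds to factorizations where $\sigma_2$ is a single cycle. The subtraction is exactly what converts the naive inequality ``at least $l_{\ell(c)}$ intersecting cycles'' into the correct count: factorizations that are double-counted or spurious in the first term must be cancelled by the second. Concretely, I expect that the factorizations counted (with sign) by the $\NN_{(l_1+l_2,2)}$ term are precisely those two-cycle configurations that fail condition~\ref{enum:marriage} in its quadratic guise, i.e.\ those for which a single cycle of $\sigma_2$ fails to have enough intersecting partners and so the two cycles effectively ``merge'' in their coloring behavior. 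Making this cancellation precise --- showing that after the subtraction every surviving triple satisfies all four conditions of Theorem~\ref{theo:quadratic-reformulated} with coefficient exactly $+1$, and that every such triple is counted exactly once --- is the delicate combinatorial bookkeeping. I would organize it by fixing the underlying pair $(\sigma_1,\sigma_2)$ and the bipartite intersection graph $\V^{\sigma_1,\sigma_2}$, then counting the colorings/labelings weighted by signs and exponents; the alternating structure should collapse to an indicator of the marriage-type condition, exactly as in the transportation interpretation of Proposition~\ref{prop:existence-of-solution}. This is the same mechanism that will drive the general proof of Theorem~\ref{theo:main}, which is why this quadratic case is an instructive toy model.
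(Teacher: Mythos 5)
Your opening reduction is correct and is in fact the same as the paper's first step: applying Corollary~\ref{coro:jak-policzyc-pochodne-po-R} with $l=2$ gives
$\left.\frac{\partial^2 K_k}{\partial R_{l_1}\partial R_{l_2}}\right|_{R=0}
= \NN^{\Sigma_k}_{(l_1,1),(l_2,1)} - \NN^{\Sigma_k}_{(l_1+l_2,2)}$,
which is exactly Equation~\eqref{eq:nabula} (in the paper the second coefficient should read $\left[p_1p_2q_2^{l_1+l_2-2}\right]$, an off-by-one misprint; your $\NN$-form is the correct one). From this point on, however, your argument has a genuine gap, and the heuristics you give in place of the missing step are concretely wrong. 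First, it is false that condition (d) of Theorem~\ref{theo:quadratic-reformulated} ``emerges from the requirement that the monomial can actually be realized'': because $\phi_1$ is defined as a \emph{maximum}, for a bijective $\phi_2$ the exponent of $q_2$ in \eqref{eq:stanley-feray-old} equals the number of cycles of $\sigma_1$ meeting the cycle carrying label $2$, so every factorization contributing to $\left[p_1p_2q_1^{l_1-1}q_2^{l_2-1}\right]\Sigma_k^{\p\times\q}$ has \emph{exactly} $l_2-1$ such cycles and therefore \emph{violates} condition (d) for that cycle. Second, the sign does not work as you claim: all contributing $\sigma_1$ have $l_1+l_2-2$ cycles, so $(-1)^{\sigma_1}$ is the constant $(-1)^{k-(l_1+l_2-2)}$, which equals $-1$ whenever the coefficient is nonzero (nonzero contributions force $l_1+l_2\equiv k+1 \pmod 2$); hence the first $\NN$ term is \emph{minus} a number of factorizations, not a positive count. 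A concrete check: for $k=3$, $l_1=l_2=2$ one has $\left[p_1p_2q_1q_2\right]\Sigma_3^{\p\times\q}=-3$, while the count in Theorem~\ref{theo:quadratic-reformulated} is $0$; only after subtracting $\left[p_1p_2q_2^{2}\right]\Sigma_3^{\p\times\q}=-3$ does one get $0$.

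The step you defer as ``delicate combinatorial bookkeeping'' is not a detail --- it is the entire content of the theorem, and your sketch offers no mechanism that would carry it out. The paper's proof runs in the opposite direction from yours: it starts from the combinatorial count and applies inclusion--exclusion \eqref{eq:paskuda}, writing it as (all triples satisfying (a)--(c)) minus (those where $\ell^{-1}(1)$ meets at most $l_1-1$ cycles of $\sigma_1$) minus (those where $\ell^{-1}(2)$ meets at most $l_2-1$ cycles), with transitivity of $\langle\sigma_1,\sigma_2\rangle$ ruling out simultaneous failure. Each of the three counts is then encoded by Theorem~\ref{theo:stanley-feray-old} as a sum of coefficients $\left[p_1p_2q_1^{i}q_2^{j}\right]\Sigma_k^{\p\times\q}$, the key trick being to choose $\phi_2$ so that the potentially failing cycle receives the Stanley label $2$, which makes its intersection count appear as the $q_2$-exponent; finally Corollary~\ref{coro:order-does-not-matter} symmetrizes the exponents and the three sums telescope, leaving precisely the two monomials $(l_1-1,l_2-1)$ and $(0,l_1+l_2-2)$, i.e.\ $\NN_{(l_1,1),(l_2,1)}-\NN_{(l_1+l_2,2)}$. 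Your plan --- fixing $(\sigma_1,\sigma_2)$, summing signed colorings, and invoking Proposition~\ref{prop:existence-of-solution} to collapse the sum to an indicator --- is not developed, and that proposition plays no role here (it is used in Section~\ref{sect:compar} to compare with F\'eray's graph decomposition); given the two errors above, there is no reason to expect your intended cancellation to produce condition (d) without the inclusion--exclusion and exponent-symmetrization mechanism just described.
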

% % % % % % % % % % % % % % % % % % % % % % % % % % % % 
% \begin{theorem}[The main result, reformulated]
% \label{theo:main-reformulated-junk}
% Let $k\geq 1$ and let $n_1,\dots,n_r\geq 2$ be a sequence of integers. The
% derivative of Kerov polynomial
% $$  \left. \frac{\partial}{\partial R_{n_1} } \cdots
% \frac{\partial}{\partial R_{n_r} } K_{k} \right|_{R_2=R_3=\cdots=0}$$ 
% is equal to the number of triples
% $(\sigma_1,\sigma_2,\ell)$ with the following properties:
% \begin{enumerate}[label=(\alph*)]
%  \item \label{enum:a}
% $\sigma_1,\sigma_2$ is a factorization of the
% cycle; in other words $\sigma_1,\sigma_2\in \Sym(k)$ are such that $\sigma_1
% \circ \sigma_2=(1,2,\dots,k)$;
%  \item $|C(\sigma_2)|=r$;
%  \item $|C(\sigma_1)|+|C(\sigma_2)|=n_1+\cdots+n_r$;
%  \item \label{enum:d} $\ell:C(\sigma_2)\rightarrow \{1,\dots,r\}$ is a
% bijection;
% % we require that for every
% % color $i\in\{2,3,\dots\}$ there are exactly $s_i$ cycles of $\sigma_2$
% % with color $i$;
%  \item for every set $A\subset C(\sigma_2)$ which is nontrivial (i.e.,
% $A\neq\emptyset$ and $A\neq C(\sigma_2)$) we require that there are more than 
% $\sum_{i\in A} \big(  n_{\ell(i)} -1 \big) $ cycles of $\sigma_1$ which
% intersect
% $\bigcup A$.
% \end{enumerate}
% \end{theorem}
% % % % % % % % % % % % % % % % % % % % % % % % % % % % % % % %
\begin{proof}
Equation \eqref{eq:s-r} shows that for any polynomial function $\F$ on the set
of generalized Young diagrams
$$ \frac{\partial^2}{\partial R_{l_1} \partial R_{l_2}} \F  = 
\frac{\partial^2}{\partial S_{l_1}\partial S_{l_2}} \F  + 
(l_1+l_2-1) \frac{\partial}{\partial S_{l_1+l_2}}
\F,
$$
where all derivatives are taken at $R_2=R_3=\cdots=S_2=S_3=\cdots=0$.
Theorem \ref{theo:stanley-and-S} shows that the right-hand side is equal to
% \begin{equation}
% \label{eq:jest-sierpien}
$$\left[p_1 p_2 q_1^{l_1-1} q_2^{l_2-1}\right] \F(\p\times\q) + 
(l_1+l_2-1) \left[p_1 q_1^{l_1+l_2-1}\right] \F(\p\times\q). $$
% \end{equation}
Lemma \ref{lem:tozsamosci-wielomianow-stanleya} applied to the second summand
shows therefore that
\begin{equation}
\label{eq:nabula}
\frac{\partial^2}{\partial R_{l_1} \partial R_{l_2}} \F =\left[p_1 p_2
q_1^{l_1-1} q_2^{l_2-1}\right] \F(\p\times\q) - 
\left[p_1 p_2 q_2^{l_1+l_2-2}\right] \F(\p\times\q). 
\end{equation}
In fact, the above equality is a direct application of Corollary
\ref{coro:jak-policzyc-pochodne-po-R}, nevertheless for pedagogical reasons we
decided to present the above expanded derivation. In the following we shall use
the above identity for $\F=\Sigma_k$.

On the other hand, let us compute the number of the triples
$(\sigma_1,\sigma_2,\ell)$ which contribute to the quantity presented in Theorem
\ref{theo:quadratic-reformulated}. By inclusion-exclusion principle it is equal
to 
\begin{multline}
\label{eq:paskuda}
\big(\text{number of triples which fulfill
conditions \ref{enum:quadratic-a}--\ref{enum:quadratic-c}}\big)+ \\
(-1) \big(\text{number of triples for which the cycle $\ell^{-1}(1)$} \\
\shoveright{\text{intersects at most $l_1-1$ cycles of $\sigma_1$}\big) +} \\
(-1) \big(\text{number of triples for which the cycle $\ell^{-1}(2)$} \\
\text{intersects at most $l_2-1$ cycles of $\sigma_1$}\big). 
\end{multline}
At first sight it might seem that the above formula is not complete since we
should also add the number of triples for which the cycle $\ell^{-1}(1)$
intersects at most $l_1-1$ cycles of $\sigma_1$ and the cycle $\ell^{-1}(2)$
intersects at most $l_2-1$ cycles of $\sigma_1$, however this situation is not
possible since $\sigma_1$ consists of $l_1+l_2-2$ cycles and $\langle
\sigma_1,\sigma_2\rangle$ acts transitively.

By Stanley-Féray character formula \eqref{eq:stanley-feray-old} the first
summand of \eqref{eq:paskuda} is equal to 
\begin{equation}
\label{eq:rown-a}
(-1) \sum_{\substack{i+j=l_1+l_2-2,\\ 1\leq j}} \left[p_1 p_2 q_1^{i}
q_2^{j} \right] \Sigma_k^{\p\times \q},
\end{equation}
the second summand of \eqref{eq:paskuda} is equal to 
\begin{equation}
\label{eq:rown-b}
  \sum_{\substack{i+j=l_1+l_2-2,\\ 1\leq i\leq l_1-1}} \left[p_1
p_2 q_1^{j} q_2^{i} \right] \Sigma_k^{\p\times \q}, 
\end{equation}
and the third summand of \eqref{eq:paskuda} is equal to 
\begin{equation}
\label{eq:rown-c}
  \sum_{\substack{i+j=l_1+l_2-2,\\ 1\leq j\leq l_2-1}}
\left[p_1 p_2 q_1^{i} q_2^{j} \right] \Sigma_k^{\p\times \q}.
\end{equation}

We can apply Corollary \ref{coro:order-does-not-matter} to the
summands of \eqref{eq:rown-b}; it follows that
\eqref{eq:rown-b} is equal to 
\begin{equation}
 \label{eq:rown-d}
  \sum_{\substack{i+j=l_1+l_2-2,\\ 1\leq i\leq l_1-1}}
\left[p_1 p_2 q_1^{i} q_2^{j} \right] \Sigma_k^{\p\times \q}.
\end{equation}

It remains now to count how many times a pair $(i,j)$ contributes to
the sum of \eqref{eq:rown-a}, \eqref{eq:rown-b}, \eqref{eq:rown-d}. It is not
difficult to see that the only pairs which contribute are $(0,l_1+l_2-2)$ and
$(l_1-1,l_2-1)$, therefore the number of triples described in the formulation
of the Theorem is equal to the right-hand of \eqref{eq:nabula} which finishes
the proof.
\end{proof}

\section{Combinatorial lemmas}
\label{sec:combinatorial-lemmas}

Our strategy of proving the main result of this paper will be to start with the
number of factorizations described in Theorem \ref{theo:main} and to interpret
it as certain linear combination of coefficients of Stanley polynomials for
$\Sigma_k$. The first step in this direction is promising: Stanley-Féray
character formula (Theorem \ref{theo:stanley-feray-old}) shows that indeed
Stanley polynomial for $\Sigma_k$ encodes certain information about the 
geometry of the bipartite graphs $\V^{\sigma_1,\sigma_2}$ for all
factorizations. Unfortunately, condition \ref{enum:marriage} is quite
complicated and at first sight it is not clear how to extract the information
about the factorizations fulfilling it from the coefficients of Stanley
polynomials.

In this section we will prove three combinatorial lemmas: Corollary
\ref{coro:good_factorizations}, Corollary \ref{coro:good_numbers} and
Corollary \ref{coro:good_numbers_and_partitions} which solve this difficulty.

\subsection{Euler characteristic}
Let $\I$ be a family of some subsets of a given finite set $\X$. 
We define
% \begin{equation}
$$ 
\chi(\I)=\sum_{l\geq 1}  
\sum_{\substack{
%  C\neq \emptyset, \\ 
C=(C_1\varsubsetneq 
\cdots
\varsubsetneq C_l), \\ C_1,\dots,C_l\in\I }}\!\!\!\!\! (-1)^{l-1},  
$$
% \end{equation}
where the sum runs over all non-empty chains $C=(C_1\varsubsetneq 
\cdots \varsubsetneq C_l)$ contained in $\I$. In such a
situation we will also say that $C$ is $l$-chain and $|C|=l$.
Notice that family $\I$ gives rise to a simplicial complex $\K$ with
$l-1$-simplices corresponding to $l$-chains 
% $(C_1\varsubsetneq  \cdots \varsubsetneq C_l)$ 
contained in $\I$ and the above quantity $\chi(\I)$ is just the Euler
characteristic of $\K$. 

The following lemma shows that under certain assumptions this Euler
characteristic is equal to $1$; we leave it as an exercise to adapt
the proof to show a stronger statement that under the same assumptions $\K$ is
in fact contractible (we will not use this stronger result in this article).

\begin{lemma}
\label{lem:zle_zbiory}
Let $\I$ be a non-empty family with a property that  
\begin{equation}  
\label{eq:bad-family}
A\cap B \in \I \quad \text{ or }\quad  A\cup B \in \I \qquad
\text{
holds for all $A,B\in \I$.}
\end{equation}
 
Then 
$$ \chi(\I)= 1. $$
\end{lemma}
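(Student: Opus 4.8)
The plan is to compute the alternating sum $\chi(\I)$ head‑on by exhibiting two sign‑reversing involutions on the set of non‑empty chains of $\I$, arranged so that everything cancels except one $1$-chain. Since $\X$ is finite and $\I$ is non‑empty, I would first fix a \emph{minimal} element $m\in\I$. Every other $A\in\I$ is then either comparable to $m$, in which case minimality forces $A\supsetneq m$, or incomparable to $m$; this yields a partition $\I=\{m\}\sqcup\I_{>m}\sqcup\I_{\perp m}$, where $\I_{>m}=\{A\in\I:A\supsetneq m\}$ and $\I_{\perp m}$ is the set of elements incomparable to $m$. The single place where hypothesis \eqref{eq:bad-family} is used is the following observation, which I would record first: if $A\in\I_{\perp m}$ then $A\cap m\subsetneq m$, and this proper subset of $m$ cannot lie in $\I$ without contradicting minimality of $m$; hence \eqref{eq:bad-family} forces $A\cup m\in\I$, and since $A\not\subseteq m$ we in fact get $A\cup m\in\I_{>m}$.

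Next I would split the non‑empty chains of $\I$ into two classes: those meeting $\I_{\perp m}$ (such a chain cannot contain $m$, because $m$ is incomparable to its $\I_{\perp m}$-member) and those contained in $\{m\}\cup\I_{>m}$. On the second class the involution is easy: toggling $m$ (inserting it when absent, deleting it when present) is always legitimate, since $m\subsetneq B$ for every $B\in\I_{>m}$, so $m$ is comparable to every element of such a chain. This pairing is fixed‑point‑free and changes the cardinality by one, hence reverses the sign $(-1)^{|C|-1}$, with the sole exception that the chain $\{m\}$ would be paired with the empty chain, which is not counted. Thus the second class contributes exactly $+1$, arising from $\{m\}$ alone.

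The heart of the argument — and the step I expect to be the main obstacle — is the involution on a chain $C$ meeting $\I_{\perp m}$. Here I would set $B=\max(C\cap\I_{\perp m})$ (the members of $C$ lying in $\I_{\perp m}$ form a sub‑chain, so this maximum is well defined) and $w=B\cup m$, which lies in $\I_{>m}$ by the observation above; the involution toggles $w$. Two points must be verified. First, $w$ is comparable to every element of $C$: any element of $C$ lying strictly above $B$ is comparable to $m$ by maximality of $B$, hence contains $m$ and therefore contains $B\cup m=w$, while every element of $C$ contained in $B$ is contained in $w$; so $C\cup\{w\}$ is again a chain. Second, $w\in\I_{>m}$ is comparable to $m$, hence $w\notin\I_{\perp m}$, so inserting or deleting $w$ leaves $C\cap\I_{\perp m}$, and therefore $B$ and $w$, unchanged — this is exactly what makes the toggle an involution rather than merely a pairing. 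It is fixed‑point‑free and sign‑reversing, so the first class contributes $0$.

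Combining the two classes gives $\chi(\I)=0+1=1$, as claimed. Finally I would remark that this is precisely the Euler‑characteristic shadow of the stronger statement: the map sending $A\mapsto A\cup m$ on $\I_{\perp m}$ and fixing $\{m\}\cup\I_{>m}$ is an order‑preserving closure operator whose image is a poset with minimum $m$, so its order complex is a cone with apex $m$ and $\K$ deformation retracts onto it; this is the contractibility assertion left as an exercise, and the two involutions above merely bookkeep the cancellations induced by that retraction.
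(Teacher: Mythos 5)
Your proof is correct, and it is a genuinely different argument from the one in the paper. The paper interpolates between $\I$ and the one-element family $\{\X\}$ through the families $\I_k=\{A\cup\{x_1,\dots,x_k\}:A\in\I\}$, proving $\chi(\I_{k-1})=\chi(\I_k)$ at each of the $n$ steps by analyzing the fibers of the chain map $\iota_k$ (a given chain $D$ has $q-p$ preimages of the same length and $q-p-1$ preimages one longer, whence \eqref{eq:euler-proof}); hypothesis \eqref{eq:bad-family} is invoked inside every step. You instead pivot on a single minimal member $m\in\I$ and cancel everything in one pass with two fixed-point-free, sign-reversing involutions: toggling $m$ on chains inside $\{m\}\cup\I_{>m}$ (leaving only the chain $\{m\}$, which contributes the $+1$), and toggling $w=\max(C\cap\I_{\perp m})\cup m$ on the chains meeting $\I_{\perp m}$. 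I checked the two points you flagged as delicate and they do hold: comparability of $w$ with every element of $C$ follows from maximality of $B=\max(C\cap\I_{\perp m})$ together with the fact that a chain meeting $\I_{\perp m}$ cannot contain $m$, and the involution property follows because $w\notin\I_{\perp m}$, so the toggle does not disturb $B$. Your route is shorter, uses \eqref{eq:bad-family} exactly once (to show that $A\in\I_{\perp m}$ forces $A\cup m\in\I_{>m}$), and your closing remark genuinely delivers the contractibility statement the paper leaves as an exercise, since $A\mapsto A\cup m$ is a closure operator on $\I$ whose image $\{m\}\cup\I_{>m}$ has minimum $m$. What the paper's clunkier telescoping buys is re-usability of its internals: in the proof of Corollary \ref{coro:good_numbers} the relevant family satisfies \eqref{eq:bad-family} only conditionally, and the authors do not apply Lemma \ref{lem:zle_zbiory} as a black box but rerun its first step $\chi(\I_0)=\chi(\I_1)$ with $x_1=1$ chosen specially. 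If your proof replaced theirs, that corollary would need its own adaptation, e.g.\ checking that the minimal element $m$ can be chosen so that the one application of \eqref{eq:bad-family} you need --- to the pair $m$, $A$ with $A$ incomparable to $m$ --- remains legitimate in that weakened setting.
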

\begin{proof}
% [Proof of Lemma \ref{lem:zle_zbiory}]
Let $\X=\{x_1,\dots,x_n\}$. We define 
$$\I_k= \big\{ A\cup \{x_1,\dots,x_k\}: A\in I \big\}.$$ 
Clearly $\I_0=\I$ and $\I_n=\{ \X \}$ therefore $\chi(\I_n)=1$. 
It remains to prove that
$\chi(\I_{k-1})=\chi(\I_{k})$ holds for all $1\leq k \leq n$ and we shall do it
in the following.

Let us fix $k$. For an $l$-chain $C=(C_1\varsubsetneq  \cdots \varsubsetneq
C_l)$ contained in $\I_{k-1}$ we define 
$$\iota_k(C) = \big(C_1\cup \{x_k\}
\subseteq \cdots \subseteq C_l\cup \{x_k\}\big)$$ 
which is a chain contained in
$\I_k$. Notice that $\iota_k(C)$ is either an $l-1$-chain (if $C_{i+1}=C_i\cup
\{x_k\}$ for some $i$) or $l$-chain (otherwise).
With these notations we have
\begin{multline*}
\chi(\I_{k-1})=\sum_{C: \text{non-empty chain in } \I_{k-1}} (-1)^{|C|-1}
= \\
\sum_{D: \text{non-empty chain in } \I_{k}} \
\sum_{\substack{C: \text{non-empty chain in } \I_{k-1},\\ \iota_k(C)=D}}
(-1)^{|C|-1}.
\end{multline*}
In order to prove $\chi(\I_{k-1})=\chi(\I_k)$ it is enough now to show that
for any non-empty chain $D=(D_1\varsubsetneq  \cdots \varsubsetneq D_l)$
contained in $\I_k$ 
\begin{equation} 
\label{eq:euler-proof}
(-1)^{|D|-1} =\sum_{\substack{C: \text{non-empty chain in }
\I_{k-1},\\ \iota_k(C)=D}}
(-1)^{|C|-1}. 
\end{equation}

Let $1\leq p\leq l$ be the maximal index with a property that $D_p\notin
\I_{k-1}$; if no such index exists we set $p=0$. In the remaining part of
this paragraph we will show that $D_i\setminus\{x_k\}\in \I_{k-1}$ holds for all
$1\leq i\leq p$. Clearly, in the cases when $p=0$ or $i=p$ there is nothing to
prove. Assume that $i<p$ and $D_i\setminus\{x_k\}\notin \I_{k-1}$. Since $D_i\in
\I_k$ it follows that $x_k\in D_i\in \I_{k-1}$.
It is easy to check that an analogue of
\eqref{eq:bad-family} holds true for the family $\I_{k-1}$. We apply this
property for $A= D_i$ and $B=D_p\setminus\{x_k\}$ which results in a
contradiction since $A\cap B=D_i\setminus\{x_k\}\notin \I_{k-1}$ and $A\cup
B=D_p\notin \I_{k-1}$.

Let $1\leq q\leq l$ be the minimal index
with a property that $x_k\in D_q$; if no such index exists we set $q=n+1$.
Similarly as above we show that $D_i\in \I_{k-1}$ holds for all
$q\leq i\leq l$.

The above analysis shows that a chain $C$ contained in $\I_{k-1}$ such that
$\iota_k(C)=D$ must have one of the following two forms:
\begin{enumerate}
 \item if $C=(C_1,\dots,C_l)$ is a $l$-chain then there exists a number $r$
($p\leq r<q$) such that
$$ C_i = \begin{cases} D_i\setminus \{x_k\} & \text{for } 1\leq i\leq r, \\
                       D_i                  & \text{for } r<i\leq l, 
         \end{cases}
 $$

 \item if $C=(C_1,\dots,C_{l+1})$ is a $l+1$-chain then there exists a number
$r$
($p<r<q$) such that
$$ C_i = \begin{cases} D_i\setminus \{x_k\} & \text{for } 1\leq i\leq r, \\
                       D_{i-1}                  & \text{for } r<i\leq l+1. 
         \end{cases}
 $$
\end{enumerate}
There are $q-p$ choices for the first case and there are $q-p-1$ choices for
the second case and \eqref{eq:euler-proof} follows.
\end{proof}

% Before the proof we will present an application of this result.

\subsection{Applications of Euler characteristic}

\begin{corollary}
\label{coro:good_factorizations}
Let $\sigma_1,\sigma_2\in \Sym(k)$ be permutations such that $\langle
\sigma_1,\sigma_2\rangle$ acts transitively and let
$q:C(\sigma_2)\rightarrow\{2,3,\dots\}$ be a coloring with a property that 
$$ \sum_{i\in C(\sigma_2)} q(i) = |C(\sigma_1)| + |C(\sigma_2)|.$$
We define $\I$ to be a family of the sets $A\subset C(\sigma_2)$ with the
following two properties:
\begin{itemize}
 \item $A\neq\emptyset$ and $A\neq C(\sigma_2)$,
 \item there are at most $\sum_{i\in A} \big( q(i)-1 \big) $ cycles of
$\sigma_1$ which intersect $\bigcup A$.
\end{itemize}

Then
\begin{equation}
\label{eq:detektor-jedynki}
\sum_{l\geq 0} \sum_{\substack{ C=(C_1\varsubsetneq 
\cdots
\varsubsetneq C_l), \\ C_1,\dots,C_l\in\I }}
% \!\!\!\!\! 
(-1)^{l} = 
\begin{cases} 1 & \text{if\/ } \I=\emptyset, \\ 0 & \text{otherwise.}
\end{cases} 
\end{equation}
\end{corollary}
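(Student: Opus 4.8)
The plan is to identify the left-hand side of \eqref{eq:detektor-jedynki} with $1-\chi(\I)$ and then to prove $\chi(\I)=1$ whenever $\I\neq\emptyset$ by an application of Lemma \ref{lem:zle_zbiory}. Indeed, the summand $l=0$ is the empty chain and contributes $(-1)^0=1$, while the summands $l\geq 1$ reproduce $-\chi(\I)$, with $\chi$ as defined just before Lemma \ref{lem:zle_zbiory}; hence the left-hand side equals $1-\chi(\I)$. When $\I=\emptyset$ there are no non-empty chains and the value is $1$, as claimed, so the remaining task is to establish $\chi(\I)=1$ in the case $\I\neq\emptyset$.

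For this I would verify the hypothesis \eqref{eq:bad-family} of Lemma \ref{lem:zle_zbiory}: that for all $A,B\in\I$ one has $A\cap B\in\I$ or $A\cup B\in\I$. Let $N(S)$ denote the set of cycles of $\sigma_1$ intersecting $\bigcup S$ and set $g(S)=|N(S)|-\sum_{i\in S}\big(q(i)-1\big)$, so that a non-trivial set $S$ lies in $\I$ precisely when $g(S)\leq 0$. The crucial input is that $S\mapsto\sum_{i\in S}(q(i)-1)$ is additive while $S\mapsto|N(S)|$ is submodular (since $N(A\cup B)=N(A)\cup N(B)$ and $N(A\cap B)\subseteq N(A)\cap N(B)$ in the bipartite graph $\V^{\sigma_1,\sigma_2}$); combined, these give
$$ g(A\cap B)+g(A\cup B)\leq g(A)+g(B)\leq 0. $$
Thus at least one of $g(A\cap B)$, $g(A\cup B)$ is non-positive, and the corresponding set satisfies the defining inequality of $\I$ — provided it is also non-trivial.

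The delicate point is non-triviality. As $A,B$ are non-trivial one has $A\cap B\subseteq A\subsetneq C(\sigma_2)$ and $A\cup B\supseteq A\neq\emptyset$, so the only possible failures are $A\cap B=\emptyset$ or $A\cup B=C(\sigma_2)$. Noting $g(\emptyset)=0$, and $g(C(\sigma_2))=0$ because the normalization $\sum_{i\in C(\sigma_2)} q(i)=|C(\sigma_1)|+|C(\sigma_2)|$ gives $\sum_{i\in C(\sigma_2)}(q(i)-1)=|C(\sigma_1)|$ while $N(C(\sigma_2))=C(\sigma_1)$, one can push the displayed inequality onto the non-trivial candidate in every case but one. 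The main obstacle is the configuration $A\cap B=\emptyset$ and $A\cup B=C(\sigma_2)$, that is $B=C(\sigma_2)\setminus A$, where both candidates are trivial. I would rule it out using connectedness of $\V^{\sigma_1,\sigma_2}$, which follows from transitivity of $\langle\sigma_1,\sigma_2\rangle$: then $N(A)\cup N(B)=C(\sigma_1)$ forces $|N(A)|+|N(B)|\geq|C(\sigma_1)|=\sum_{i\in C(\sigma_2)}(q(i)-1)$, whereas $A,B\in\I$ force the reverse inequality; equality then yields $N(A)\cap N(B)=\emptyset$, which disconnects $A\cup N(A)$ from $B\cup N(B)$ and contradicts connectedness. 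With this case excluded, hypothesis \eqref{eq:bad-family} holds, Lemma \ref{lem:zle_zbiory} gives $\chi(\I)=1$, and the left-hand side of \eqref{eq:detektor-jedynki} equals $0$, completing the proof.
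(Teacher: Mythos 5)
Your proposal is correct and follows essentially the same route as the paper: the same function $f(A)=|N(A)|-\sum_{i\in A}(q(i)-1)$, the same submodularity inequality \eqref{eq:superadditivity}, the same treatment of the boundary cases via $f(\emptyset)=f(C(\sigma_2))=0$, the same use of transitivity to exclude the complementary-pair configuration, and the same appeal to Lemma \ref{lem:zle_zbiory} to get $\chi(\I)=1$. The only difference is presentational: you spell out the submodularity verification and the disconnection argument in slightly more detail than the paper does.
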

\begin{proof}
It is enough to prove that the family $\I$ fulfills the assumption of Lemma
\ref{lem:zle_zbiory}; we shall do it in the following.
For $A\subseteq C(\sigma_2)$ we define
$$ f(A)= \left( \text{number of cycles of $\sigma_1$ which intersect 
$\bigcup A$} \right) - \sum_{i\in A} \big( q(i)-1 \big). $$
In this way $A\in\I$ iff $A\neq \emptyset$, $A\neq C(\sigma_2)$ and 
$f(A)\leq 0$.

It is easy to check that for any $A,B\subseteq C(\sigma_2)$
\begin{equation}
\label{eq:superadditivity}
 f(A)+f(B)\geq f(A\cup B)+f(A\cap B) 
\end{equation}
therefore if $A,B\in \I$ then $f(A\cup B)\leq 0$ or $f(A\cap B)\leq 0$.
If $A\cap B\neq \emptyset$ and $A\cup B\neq C(\sigma_2)$ this finishes the
proof. Since $f(\emptyset)=f\big( C(\sigma_2) \big) =0 $ also the case when
either $A\cap B=\emptyset$ or $A\cup B=C(\sigma_2)$ follows immediately.

It follows that if $A,B\in\I$ and $A\cap B,A\cup B\notin\I$ then $A,B\neq
\emptyset$, $A\cap B=\emptyset$, $A\cup B=C(\sigma_2)$, $f(A)=f(B)=0$. The
latter equality shows that
\begin{multline*} \left( \text{number of cycles of $\sigma_1$ which intersect
$\bigcup
A$} \right) + \\ \left( \text{number of cycles of $\sigma_1$ which intersect
$\bigcup B$} \right) = |C(\sigma_1)|\end{multline*}
therefore each cycle of $\sigma_1$ intersects either $\bigcup A$ or $\bigcup B$
which contradicts transitivity.
% 
% 
% For any $A,B\subset C(\sigma_2)$ we have
% $$\sum_{i\in A} \big( q(i)-1 \big) + \sum_{i\in B} \big( q(i)-1 \big) =
% \sum_{i\in A\cup B} \big( q(i)-1 \big) + \sum_{i\in A\cap B} \big( q(i)-1
% \big)$$
% and
% \begin{multline*} 
% \left( \text{number of cycles of $\sigma_1$ which intersect $\bigcup
% A$} \right) + \\
% \left( \text{number of cycles of $\sigma_1$ which intersect $\bigcup B$}
% \right)\geq  \\
% \left( \text{number of cycles of $\sigma_1$ which intersect $\bigcup (A\cup
% B)$} \right)+\\ 
%   \left( \text{number of cycles of $\sigma_1$ which intersect $\bigcup (A\cap
% B)$} \right).
% \end{multline*}
%  
% 
% 
\end{proof}

\begin{lemma}
For any $n\geq 1$
\begin{equation}
\label{eq:stirling}
 \sum_{k} (-1)^k\ \stirling{n}{k}\ k!  = (-1)^{n}, 
\end{equation}
where $\stirling{n}{k}$ denotes the Stirling symbol of the first kind, namely
the number of ways of partitioning $n$-element set into $k$ non-empty classes.
\end{lemma}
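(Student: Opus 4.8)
The plan is to deduce this from the classical polynomial identity that characterizes the Stirling numbers $\stirling{n}{k}$ (which, despite the name used in the statement, are here the Stirling numbers of the \emph{second} kind, since they count partitions of an $n$-element set into $k$ non-empty classes), namely
$$ x^n = \sum_{k} \stirling{n}{k}\ x(x-1)\cdots(x-k+1), $$
valid as an identity of polynomials in $x$. I would either invoke this identity as known or, to keep the argument self-contained, justify it by the standard double-counting: for a fixed set $X$ with $|X|=x$ a non-negative integer, both sides count the functions from an $n$-element set to $X$. The left-hand side counts all such functions, while on the right-hand side we group them by the size $k$ of their image, the factor $\stirling{n}{k}$ counting the partitions of the domain into the $k$ fibers and the falling factorial $x(x-1)\cdots(x-k+1)$ counting the injections of these $k$ blocks into $X$. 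Since the two sides agree for every non-negative integer value of $x$, they agree as polynomials.

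Once this identity is available, the whole proof reduces to a single specialization. I would set $x=-1$ and evaluate the falling factorial there:
$$ (-1)(-1-1)\cdots(-1-k+1) = (-1)(-2)\cdots(-k) = (-1)^k\, k!. $$
Substituting into the polynomial identity yields
$$ (-1)^n = \sum_{k} \stirling{n}{k}\, (-1)^k\, k!, $$
which is precisely \eqref{eq:stirling}.

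I do not expect any genuine obstacle here; the only points demanding a little care are recognizing the correct classical identity to invoke and evaluating the falling factorial at the negative argument without a sign error. Should a generating-function presentation be preferred instead, an equally short route uses the fact that $\stirling{n}{k}\,k!$ is the number of surjections from an $n$-element set onto a $k$-element set, so that
$$ \sum_n \Big(\sum_k \stirling{n}{k}\,k!\,x^k\Big) \frac{t^n}{n!} = \frac{1}{1-x(e^t-1)}; $$
specializing to $x=-1$ collapses the right-hand side to $e^{-t}=\sum_n (-1)^n \frac{t^n}{n!}$, and comparing the coefficients of $\frac{t^n}{n!}$ again gives \eqref{eq:stirling}. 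Of the two, I expect the falling-factorial specialization to be the cleaner exposition.
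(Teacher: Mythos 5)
Your proof is correct, but it takes a different route from the paper. The paper's proof is a one-line induction on $n$ using the recurrence $\stirling{n}{k}= k\,\stirling{n-1}{k} + \stirling{n-1}{k-1}$: writing $a_n=\sum_k(-1)^k\stirling{n}{k}k!$, the recurrence gives $a_n=-a_{n-1}$, and $a_1=-1$ finishes it. You instead invoke the classical polynomial identity $x^n=\sum_k \stirling{n}{k}\,x(x-1)\cdots(x-k+1)$ and specialize at $x=-1$, where the falling factorial evaluates to $(-1)^k k!$; this is a clean, sign-correct computation, and your double-counting justification of the identity (functions grouped by image size) is standard and sound. What each approach buys: the paper's induction is entirely self-contained, needing nothing beyond the recurrence, whereas yours is more conceptual, exhibiting the identity as a single evaluation of the change-of-basis formula between powers and falling factorials (with the surjection generating function $\bigl(1-x(e^t-1)\bigr)^{-1}$ at $x=-1$ as an equivalent packaging). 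A minor but worthwhile observation you make, which the paper does not: despite the paper's wording ``Stirling symbol of the first kind,'' the combinatorial definition given (partitions of an $n$-element set into $k$ non-empty classes) is that of the Stirling numbers of the \emph{second} kind, and both your proof and the paper's correctly treat them as such.
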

\begin{proof}
A simple inductive proof follows from the recurrence relation
$$\stirling{n}{k}= k\ \stirling{n-1}{k} + \stirling{n-1}{k-1}. $$
\end{proof}

\begin{corollary}
\label{coro:good_numbers}
Let $r\geq 1$ and let $k_1,\dots,k_r$ and $n_1,\dots,n_r$ be numbers such that
$k_1+\cdots+k_r=n_1+\cdots+n_r$.
We define $\I$ to be a family of the sets $A\subset \{1,\dots,r\}$ with the
following properties: $A\neq\emptyset$ and $A\neq\{1,\dots,r\}$ and
$$\sum_{i\in A} k_i \leq \sum_{i\in A} n_i.$$

Then
\begin{equation}
\label{eq:detektor-jedynki-2}
\sum_{l\geq 0} \sum_{\substack{ C=(C_1\varsubsetneq 
\cdots
\varsubsetneq C_l), \\ C_1,\dots,C_l\in\I }}
% \!\!\!\!\! 
(-1)^{l} = 
\begin{cases} (-1)^{r-1} & \text{if\/ } (k_1,\dots,k_r)=(n_1,\dots,n_r), \\ 0 &
\text{otherwise.}
\end{cases} 
\end{equation}
\end{corollary}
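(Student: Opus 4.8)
The plan is to rewrite the condition defining $\I$ through a modular function and then to treat separately the two cases appearing on the right-hand side of \eqref{eq:detektor-jedynki-2}. Set $d_i=n_i-k_i$ and, for $A\subseteq\{1,\dots,r\}$, let $g(A)=\sum_{i\in A}d_i$; since $\sum_i d_i=0$ we have $g(\emptyset)=g(\{1,\dots,r\})=0$, and $g$ is modular, i.e.\ $g(A\cup B)+g(A\cap B)=g(A)+g(B)$. With this notation $A\in\I$ precisely when $A$ is nontrivial and $g(A)\ge 0$, and the left-hand side of \eqref{eq:detektor-jedynki-2} is $\sum_{l\ge 0}(-1)^l c_l$, where $c_l$ denotes the number of $l$-chains contained in $\I$ (the term $l=0$ being the empty chain).

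First I would treat the case $(k_1,\dots,k_r)=(n_1,\dots,n_r)$, in which every $d_i=0$, so $g\equiv 0$ and $\I$ is the family of \emph{all} nontrivial subsets of $\{1,\dots,r\}$. Here an $l$-chain $C_1\varsubsetneq\cdots\varsubsetneq C_l$ becomes, after inserting $\emptyset$ at the bottom and $\{1,\dots,r\}$ at the top, a maximal flag whose successive differences form an ordered set partition of $\{1,\dots,r\}$ into $l+1$ nonempty blocks; this sets up a bijection showing $c_l=(l+1)!\,\stirling{r}{l+1}$. Substituting $m=l+1$ and invoking identity \eqref{eq:stirling} then gives $\sum_{l\ge 0}(-1)^l c_l=-\sum_{m\ge 1}(-1)^m m!\,\stirling{r}{m}=-(-1)^r=(-1)^{r-1}$, as desired.

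The harder case is $(k_1,\dots,k_r)\neq(n_1,\dots,n_r)$, where the hypotheses of Lemma \ref{lem:zle_zbiory} may fail: when $d$ has both signs one can have complementary nontrivial sets $A,A^{c}\in\I$ with $g(A)=g(A^{c})=0$ while $A\cap A^{c}=\emptyset$ and $A\cup A^{c}=\{1,\dots,r\}$, so the argument behind Corollary \ref{coro:good_factorizations} does not apply verbatim. Instead I would exhibit a \emph{cone point}. Since $\sum_i d_i=0$ and the $d_i$ are not all zero, choose $i_0$ with $d_{i_0}>0$; then $\{i_0\}\in\I$ and the map $A\mapsto A\cup\{i_0\}$ sends $\I$ into $\I$: modularity gives $g(A\cup\{i_0\})\ge g(A)\ge 0$, and $A\cup\{i_0\}$ can equal $\{1,\dots,r\}$ only for $A\in\{\{1,\dots,r\},\{1,\dots,r\}\setminus\{i_0\}\}$, neither of which lies in $\I$ (the second has $g=-d_{i_0}<0$).

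Using this I would build a fixed-point-free, sign-reversing involution on all chains of $\I$. Given a chain $C$, let $C_1\varsubsetneq\cdots\varsubsetneq C_s$ be its members not containing $i_0$ (an initial segment, since ``containing $i_0$'' is upward closed), set $C_0=\emptyset$, and put $D=C_s\cup\{i_0\}$; then $D\in\I$ and $C_s\varsubsetneq D\subseteq C_{s+1}$ whenever $C_{s+1}$ exists. The involution removes $D$ from $C$ if it already occurs and inserts it otherwise; it changes the length of every chain (including the empty one, matched with $(\{i_0\})$) by exactly one, so $\sum_{l\ge 0}(-1)^l c_l=0$. The main obstacle is precisely this second case: one must notice that Lemma \ref{lem:zle_zbiory} is no longer available and replace it by the cone-point construction, which is exactly the feature that distinguishes the answer $0$ here from the sphere-like answer $(-1)^{r-1}$ obtained when all $d_i$ vanish, where no such $i_0$, hence no cone point, exists.
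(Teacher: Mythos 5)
Your proof is correct, and in the hard case it takes a genuinely different route from the paper. The first case, $(k_1,\dots,k_r)=(n_1,\dots,n_r)$, is handled exactly as in the paper: the same bijection between $l$-chains of nontrivial subsets and ordered set partitions into $l+1$ blocks, followed by identity \eqref{eq:stirling}. In the second case the paper, having observed (as you do) that condition \eqref{eq:bad-family} can only fail for complementary sets $A,B$ with $f(A)=f(B)=0$, does not abandon Lemma \ref{lem:zle_zbiory}: it reorders the ground set so that $x_1=1$ with $k_1\neq n_1$, checks by inspection that the first step $\chi(\I_0)=\chi(\I_1)$ of the lemma's inductive proof still goes through, and notes that the subsequent families $\I_1,\I_2,\dots$ (all of whose members contain $1$) genuinely satisfy \eqref{eq:bad-family}, so the lemma's induction finishes; in the lemma's normalization this yields $\chi(\I)=1$, equivalently the alternating sum \eqref{eq:detektor-jedynki-2}, which also counts the empty chain, vanishes. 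You instead exhibit a cone point: an index $i_0$ with $n_{i_0}-k_{i_0}>0$ gives a map $A\mapsto A\cup\{i_0\}$ preserving $\I$ (the key verification being that $\{1,\dots,r\}\setminus\{i_0\}\notin\I$), and inserting or deleting $C_s\cup\{i_0\}$ is a fixed-point-free, sign-reversing involution on all chains, the empty chain being paired with $(\{i_0\})$; I checked the well-definedness and involutivity and they hold. Your argument buys self-containedness and transparency: it avoids the ``careful inspection'' of the lemma's proof that the paper leaves terse, and it explains conceptually why the answer is $0$ precisely when some $d_{i_0}>0$ exists (the chain complex is then a cone, whereas for $d\equiv 0$ it is the barycentric subdivision of a sphere-like complex with reduced Euler characteristic $(-1)^{r-1}$). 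What the paper's choice buys is uniformity: the same filtration argument is invoked for Corollary \ref{coro:good_factorizations} and reused verbatim for Corollary \ref{coro:good_numbers_and_partitions}, where the relevant set function is only superadditive rather than modular, so your involution would need a separate (if similar) adaptation there.
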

\begin{proof}
If $(k_1,\dots,k_r)=(n_1,\dots,n_r)$ then $\I$ consists of all subsets of
$\{1,\dots,r\}$ with the exception of $\emptyset$ and $\{1,\dots,r\}$.
Therefore there is a bijective correspondence between the chains
$C=(C_1\varsubsetneq \cdots \varsubsetneq C_l)$ which contribute to the
left-hand side of \eqref{eq:detektor-jedynki-2} and sequences
$(D_1,\dots,D_{l+1})$ of non-empty and disjoint sets such that
$D_1\cup \cdots \cup D_{l+1}=\{1,2,\dots,r\}$; this correspondence is defined
by requirement that
$$ C_i=D_1\cup\cdots\cup D_i.$$
It follows that the left-hand side of \eqref{eq:detektor-jedynki-2} is equal to 
$$ \sum_{l} (-1)^l\ \stirling{n}{l+1}\ (l+1)! $$
which can be evaluated thanks to \eqref{eq:stirling}.

We consider the case when $(k_1,\dots,k_r)\neq(n_1,\dots,n_r)$; for simplicity
we assume that $k_1\neq n_1$. 
We define 
$$f(A)=\sum_{i\in A} (k_i-n_i)$$
which fulfills \eqref{eq:superadditivity} and similarly as in the proof of
Corollary \ref{coro:good_factorizations} we conclude that condition
\eqref{eq:bad-family} is fulfilled under additional assumption that $A\cap B\neq
\emptyset$ or $A\cup B\neq\{1,2,\dots, r\}$; this means that Lemma
\ref{lem:zle_zbiory} cannot be applied directly and we must analyze the details
of its proof. We select the sequence $x_1,x_2,\dots$ used in the proof of Lemma
\ref{lem:zle_zbiory} in such a way that $x_1=1$. A careful inspection shows
that the proof of the equality $\chi(\I_0)=\chi(\I_1)$ presented above is still
valid. Since families $\I_1,\I_2,\dots$ fulfill condition \eqref{eq:bad-family}
therefore 
% $\chi(\I)=\chi(\I_0)=\
$\chi(\I_1)=\chi(\I_2)=\cdots=0 $.
\end{proof}

\begin{corollary}
\label{coro:good_numbers_and_partitions}
Let $r\geq 1$, let $\Pi\in P(1,2,\dots,r)$ be a partition, let
$n_1,\dots,n_r$ be numbers and let $\phi:\Pi\rightarrow\R$ be a
function on the set of blocks of the partition $\Pi$ with a property that
$$ \sum_{b\in\Pi} \phi(b) = n_1+\cdots+n_r$$
and $\phi(b)\geq |b|$ holds for each block $b\in\Pi$.
We define $\I$ to be a family of the sets $A\subset \{1,\dots,r\}$ with the
following properties: $A\neq\emptyset$ and $A\neq\{1,\dots,r\}$
and
$$\sum_{\substack{b\in\Pi,\\ b\cap A\neq \emptyset}} \left( \phi(b) -\big| b
\setminus A \big| \right)
 \leq \sum_{i\in A} n_i.$$

Then
\begin{equation}
\label{eq:detektor-jedynki-3}
\sum_{l\geq 0} \sum_{\substack{ C=(C_1\varsubsetneq 
\cdots
\varsubsetneq C_l), \\ C_1,\dots,C_l\in\I }}
% \!\!\!\!\! 
(-1)^{l} = 
\begin{cases} (-1)^{|\Pi|-1} & \text{if $\phi(b)=\sum_{i\in b} n_i$} \\
 &  \text{\ \ \ \ \ \ holds for each block $b\in\Pi$}, \\
0 & \text{otherwise.}
\end{cases} 
\end{equation}
\end{corollary}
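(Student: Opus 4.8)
The plan is to reduce membership in $\I$ to a single set function and then run the Euler-characteristic machinery of Corollaries \ref{coro:good_factorizations} and \ref{coro:good_numbers}, the only genuinely new ingredient being a submodularity estimate that tames the partition structure. For $A\subseteq\{1,\dots,r\}$ I would set
$$ g(A)=\sum_{\substack{b\in\Pi,\\ b\cap A\neq\emptyset}}\bigl(\phi(b)-|b\setminus A|\bigr)-\sum_{i\in A}n_i, $$
so that $A\in\I$ precisely when $A$ is nontrivial and $g(A)\leq 0$. The hypothesis $\sum_{b}\phi(b)=\sum_i n_i$ immediately gives $g(\emptyset)=g(\{1,\dots,r\})=0$, so $g$ plays exactly the role of the function $f$ in the two preceding corollaries.

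The heart of the matter is to show that $g$ satisfies the superadditivity \eqref{eq:superadditivity}, that is $g(A)+g(B)\geq g(A\cup B)+g(A\cap B)$. Since $\sum_{i\in A}n_i$ is additive, it is enough to treat $h(A):=\sum_{b\cap A\neq\emptyset}\bigl(\phi(b)-|b\setminus A|\bigr)$, and here I would argue block by block. For a fixed block $b$ with $|b|=m$ one compares its contribution to $h(A)$, $h(B)$, $h(A\cup B)$ and $h(A\cap B)$; a short case analysis on whether $b$ meets $A$, meets $B$ and meets $A\cap B$ shows that the net contribution of $b$ vanishes in every case \emph{except} when $b$ meets both $A$ and $B$ but is disjoint from $A\cap B$, where it equals $\phi(b)-m$. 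The standing assumption $\phi(b)\geq|b|$ makes each such term nonnegative, which is exactly \eqref{eq:superadditivity}. I expect this block-by-block verification to be the main obstacle, since it is the step where the combinatorics of the partition $\Pi$ enters and where the inequality $\phi(b)\geq|b|$ is used in an essential way.

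Once \eqref{eq:superadditivity} is available, the rest follows the template of Corollaries \ref{coro:good_factorizations} and \ref{coro:good_numbers}. If $A,B\in\I$ then $g(A\cup B)\leq 0$ or $g(A\cap B)\leq 0$, so $\I$ fulfills condition \eqref{eq:bad-family} except possibly when $A$ and $B$ partition $\{1,\dots,r\}$ with $g(A)=g(B)=0$ and with every block split by the pair $(A,B)$ meeting $\phi(b)=|b|$; the excess term computed above is precisely what pins these configurations down. I would then split into the two cases of \eqref{eq:detektor-jedynki-3}. If $\phi(b)=\sum_{i\in b}n_i$ fails for some block, I would enumerate the ground set as $x_1,x_2,\dots$ (as in the proof of Lemma \ref{lem:zle_zbiory}) starting from an element witnessing this failure; a careful inspection, just as in Corollary \ref{coro:good_numbers}, shows the step $\chi(\I_0)=\chi(\I_1)$ still goes through while the families $\I_1,\I_2,\dots$ satisfy \eqref{eq:bad-family} outright, forcing the alternating sum to be $0$. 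If instead $\phi(b)=\sum_{i\in b}n_i$ holds for every block, the superadditivity collapses the relevant sets onto the blocks, so that the count reduces to the situation of Corollary \ref{coro:good_numbers} with equality holding on the $|\Pi|$ blocks regarded as a new ground set; identifying the contributing chains with ordered set partitions and applying the Stirling evaluation \eqref{eq:stirling} then produces the value $(-1)^{|\Pi|-1}$. The delicate point in this second case, inherited from Corollary \ref{coro:good_numbers}, is controlling the complementary-pair configurations that obstruct a direct use of Lemma \ref{lem:zle_zbiory}.
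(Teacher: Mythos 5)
Your proposal is correct and takes essentially the same approach as the paper: the paper's proof simply defines $f(A)=|A|+\sum_{b\cap A\neq\emptyset}\left(\phi(b)-|b|\right)$, which coincides with your $g(A)+\sum_{i\in A}n_i$ (so the two functions differ by a modular term and have identical submodularity behaviour), asserts that it fulfills \eqref{eq:superadditivity}, and declares the remainder analogous to Corollary \ref{coro:good_numbers}. Your block-by-block verification of \eqref{eq:superadditivity} and your two-case analysis (witness enumeration when $\phi(b)=\sum_{i\in b}n_i$ fails, reduction to block-unions and the Stirling evaluation \eqref{eq:stirling} when it holds) are exactly the details the paper leaves implicit.
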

\begin{proof}
We define
$$f(A)= |A|+\sum_{\substack{b\in\Pi,\\ b\cap A\neq\emptyset}} \left( \phi(b) -
|b| \right) $$ 
which fulfills \eqref{eq:superadditivity}. The remaining part of the proof is
analogous to Corollary \ref{coro:good_numbers}.
\end{proof}

\section{Proof of the main result}
\label{sec:proof}

We will prove Theorem \ref{theo:main} in the following equivalent form.
\begin{theorem}[The main result, reformulated]
\label{theo:main-reformulated}
Let $k\geq 1$ and let $n_1,\dots,n_r\geq 2$ be a sequence of integers. The
derivative of Kerov polynomial
$$  \left. \frac{\partial}{\partial R_{n_1} } \cdots
\frac{\partial}{\partial R_{n_r} } K_{k} \right|_{R_2=R_3=\cdots=0}$$ 
is equal to the number of triples
$(\sigma_1,\sigma_2,\ell)$ with the following properties:
\begin{enumerate}[label=(\alph*)]
 \item \label{enum:a}
$\sigma_1,\sigma_2$ is a factorization of the
cycle; in other words $\sigma_1,\sigma_2\in \Sym(k)$ are such that $\sigma_1
\circ \sigma_2=(1,2,\dots,k)$;
 \item $|C(\sigma_2)|=r$;
 \item $|C(\sigma_1)|+|C(\sigma_2)|=n_1+\cdots+n_r$;
 \item \label{enum:d} $\ell:C(\sigma_2)\rightarrow \{1,\dots,r\}$ is a
bijection;
% we require that for every
% color $i\in\{2,3,\dots\}$ there are exactly $s_i$ cycles of $\sigma_2$
% with color $i$;
 \item for every set $A\subset C(\sigma_2)$ which is nontrivial (i.e.,
$A\neq\emptyset$ and $A\neq C(\sigma_2)$) we require that there are more than 
$\sum_{i\in A} \big(  n_{\ell(i)} -1 \big) $ cycles of $\sigma_1$ which
intersect
$\bigcup A$.
\end{enumerate}
\end{theorem}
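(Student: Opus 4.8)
The plan is to compute both sides as one and the same signed combination of coefficients of the Stanley polynomial $\Sigma_k^{\p\times\q}$, in the spirit of the toy example of Section \ref{sec:toyexample} but replacing the ad hoc inclusion--exclusion used there by the Euler-characteristic lemmas of Section \ref{sec:combinatorial-lemmas}. Since each $n_i\geq 2$, Corollary \ref{coro:jak-policzyc-pochodne-po-R} applies verbatim with $\F=\Sigma_k$ and rewrites the left-hand side as
$$ \sum_{\Pi\in P(1,2,\dots,r)} (-1)^{r-|\Pi|}\ \NN^{\Sigma_k}_{\left(\left(\sum_{i\in b}n_i,|b|\right):b\in\Pi\right)}, $$
so that, unwinding the definition of $\NN$, the whole expression becomes a finite integer combination of coefficients $[p_1 q_1^{a_1-1}\cdots p_m q_m^{a_m-1}]\,\Sigma_k^{\p\times\q}$.

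Next I would feed each such coefficient into the Stanley--Féray formula (Theorem \ref{theo:stanley-feray-old}). Extracting the coefficient of a monomial in which every $p_j$ occurs to the first power forces the labeling $\phi_2$ on $C(\sigma_2)$ to be a bijection, while the prescribed exponents of the $q_j$, read through the rule assigning to each white cycle the maximal $\phi_2$-label among the black cycles it meets, record for each segment of the label order how many cycles of $\sigma_1$ meet the corresponding union of cycles of $\sigma_2$. Interchanging the order of summation then turns the left-hand side into a sum over factorizations $(\sigma_1,\sigma_2)$ satisfying \ref{enum:a}, each weighted by an inner alternating sum indexed by the set partitions of the labels together with these attachment data.

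The heart of the argument is to show that, for a fixed factorization $(\sigma_1,\sigma_2)$, this inner alternating sum collapses to the number of bijections $\ell$ for which $(\sigma_1,\sigma_2,\ell)$ satisfies the remaining conditions. Here the numerical detector identities \eqref{eq:detektor-jedynki-2} and \eqref{eq:detektor-jedynki-3} of Corollary \ref{coro:good_numbers} and Corollary \ref{coro:good_numbers_and_partitions} are used to annihilate every configuration except those in which the attachment degrees match the prescribed block sums $\sum_{i\in b}n_i$, thereby enforcing the cycle-count conditions $|C(\sigma_2)|=r$ and $|C(\sigma_1)|+|C(\sigma_2)|=n_1+\cdots+n_r$; and the factorization detector \eqref{eq:detektor-jedynki} of Corollary \ref{coro:good_factorizations}, whose hypothesis is supplied precisely by this degree condition, converts the surviving alternating sum over the family of nontrivial ``bad'' subsets $A\subseteq C(\sigma_2)$ --- those met by at most $\sum_{i\in A}(n_{\ell(i)}-1)$ cycles of $\sigma_1$ --- into the indicator that no such subset exists, which is exactly the final marriage inequality.

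I expect the principal obstacle to be the bookkeeping of the three independent families of signs: the alternation $(-1)^{r-|\Pi|}$ coming from Corollary \ref{coro:jak-policzyc-pochodne-po-R}, the factors $(-1)^{\sigma_1}$ from Stanley--Féray, and the Euler-characteristic signs $(-1)^{|C|}$ produced by the chain sums, which must be shown to combine so that every surviving term carries coefficient $+1$. A second delicate point, exactly as in the toy example, is that the transitivity of $\langle\sigma_1,\sigma_2\rangle$ (automatic here, since $\sigma_1\circ\sigma_2$ is a single cycle) must be invoked to exclude the degenerate situation in which $C(\sigma_2)$ splits into two complementary nonempty parts on each of which the defect function $f$ of Corollary \ref{coro:good_factorizations} vanishes; this is precisely the configuration that would otherwise survive and spoil the cancellation, and ruling it out is what guarantees that the detector returns the value $1$ on genuinely admissible factorizations.
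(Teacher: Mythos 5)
Your outline assembles the right ingredients (Corollary \ref{coro:jak-policzyc-pochodne-po-R}, Theorem \ref{theo:stanley-feray-old}, the detector lemmas of Section \ref{sec:combinatorial-lemmas}), but the step you yourself call the heart of the argument is false: the identity does \emph{not} localize to a fixed factorization $(\sigma_1,\sigma_2)$. Take $k=5$, $r=2$, $n_1=n_2=2$, so that the left-hand side equals $2!\cdot 5=10$ (the coefficient of $R_2^2$ in $K_5$ being $5$). Corollary \ref{coro:jak-policzyc-pochodne-po-R} gives
\begin{equation*}
\left.\frac{\partial^2 K_5}{\partial R_2\,\partial R_2}\right|_{0}
=[p_1q_1p_2q_2]\,\Sigma_5^{\p\times\q}+3\,[p_1q_1^{3}]\,\Sigma_5^{\p\times\q}.
\end{equation*}
Now look at which factorizations carry the weight on each side. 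A factorization with $\sigma_2$ a $5$-cycle and $|C(\sigma_1)|=3$ (for instance $\sigma_1=(1,5,4)$, $\sigma_2=(1,2,3,5,4)$) contributes $+3$ to the right-hand side through the coarse partition $\Pi=\big\{\{1,2\}\big\}$, yet it admits no admissible $\ell$ whatsoever, because condition (b) forces $|C(\sigma_2)|=r=2$; so for this factorization your inner alternating sum equals $3$ while the number of admissible bijections is $0$. Conversely, a factorization that the theorem does count (here: $|C(\sigma_1)|=|C(\sigma_2)|=2$ with every cycle of $\sigma_1$ meeting every cycle of $\sigma_2$) contributes $0$ to both coefficients above, since every white vertex then has maximal label $2$ and its Stanley--F\'eray monomial is $p_1p_2q_2^2$. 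Thus the entire value $10$ is carried, with mixed signs, by factorizations the theorem never counts, and the cancellation is inherently global; no per-factorization collapse can be true.

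This is exactly why the paper's proof never fixes a factorization. It runs in the opposite direction and keeps every identity global: the marriage count is first expanded by inclusion--exclusion into chain sums $\sum_{C}(-1)^{l}\,\Bad_C$ (Corollary \ref{coro:good_factorizations} is used there as an expansion device, not as a per-factorization evaluator); each $\Bad_C$ is expressed through Theorem \ref{theo:stanley-feray-old} as a sum of Stanley coefficients whose exponents may equal $1$, which requires Lemma \ref{lem:tozsamosci-wielomianow-stanleya} (absent from your plan); and, crucially, Corollary \ref{coro:order-does-not-matter} is invoked repeatedly to permute exponents, i.e.\ to replace one signed family of factorizations by a \emph{different} family having the same total weight --- a step with no term-by-term meaning. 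Only after these re-indexings do Corollaries \ref{coro:good_numbers} and \ref{coro:good_numbers_and_partitions} collapse the chain sums into the expression of Corollary \ref{coro:jak-policzyc-pochodne-po-R}. A reversed proof along your general lines is possible in principle, but the chains and the reordering identities must still be introduced, and all intermediate equalities must be read as equalities of sums over \emph{all} factorizations; any formulation of your central step that holds factorization by factorization is unsalvageable.
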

\begin{proof}
% From Corollary \ref{coro:good_factorizations} it follows that 
% 
% Let $$ X=(\underbrace{2,\dots,2}_{s_2\text{ times}},
% \underbrace{3,\dots,3}_{s_3\text{ times}},\dots ) $$
Let us sum both sides of \eqref{eq:detektor-jedynki} over all triples
$(\sigma_1,\sigma_2,\ell)$ for which conditions \ref{enum:a}--\ref{enum:d}
are fulfilled; for such triples we define the coloring
$q:C(\sigma_2)\rightarrow\{2,3,\dots\}$ by $q(i)=n_{\ell(i)}$.
It follows that the number of triples which fulfill all
conditions from the formulation of the theorem is equal to 
\begin{equation}
\label{eq:to-chcemy-policzyc}
\sum_{l\geq 0} \sum_{\substack{C=(C_1,\dots,C_l),\\
\emptyset\varsubsetneq C_1\varsubsetneq 
\cdots \varsubsetneq C_l\varsubsetneq \{1,2,\dots,r\} }}
% \!\!\!\!\! 
(-1)^{l}\ \Bad_C,   
\end{equation}
where $\Bad_C$ for $C=(C_1,\dots,C_l)$ denotes the number of triples
$(\sigma_1,\sigma_2,\ell)$ which fulfill \ref{enum:a}--\ref{enum:d} and such
that for each $1\leq j\leq l$ there are at most 
$\sum_{i\in C_j} \big(  n_{i} -1 \big) $ cycles of $\sigma_1$ which
intersect $\bigcup_{i\in C_j} \ell^{-1}(i)$.

Theorem \ref{theo:stanley-feray-old} shows that 
\begin{equation} 
\label{eq:bad}
\Bad_C = (-1)^{r-1} \sum_{k_1,\dots,k_r}  [p_1 \cdots p_r
q_1^{k_1-1} \cdots q_r^{k_r-1}] \Sigma^{\p\times\q}_k, 
\end{equation}
where the above sum is taken over
all integers $k_1,\dots,k_r\geq 1$ such that 
\begin{equation}
\label{eq:trudny-warunek-a}
% \left\{ \begin{split}
 k_1+\cdots+k_r=n_1+\cdots+n_r 
\end{equation}
% (MAM NIEJASNA NIEPEWNOSC, CZY OD KTOREJS STRON NIE NALEZALOBY KONSYSTENTNIE
% POODEJMOWAC JEDYNKI)
and
\begin{equation}
\label{eq:trudny-warunek-b}
\underbrace{k_{r+1-|C_j|}+\cdots+k_r
}_{|C_j| \text{ summands}} \leq
\sum_{i\in C_j} n_{i}  \qquad \text{holds for each $1\leq j\leq l$ }.  
%  \end{split}
% \right.
\end{equation}

We apply Lemma \ref{lem:tozsamosci-wielomianow-stanleya} to the right-hand side
of \eqref{eq:bad}. Therefore
\begin{equation}
\label{eq:BadC}
\Bad_C = (-1)^{r-1} \sum_{\Pi\in P(1,2,\dots,r)} \sum_{k_1,\dots,k_r} 
\NN^{\Sigma_k}_{\left( \left( \sum_{i\in b} k_i ,
|b|\right)  : b\in \Pi \right)}, 
\end{equation}
where the first sum runs over all partitions $\Pi$ of the set $\{1,2,\dots,r\}$
and the second sum runs over the tuples $k_1,\dots,k_r$ which fulfill 
conditions \eqref{eq:trudny-warunek-a}, \eqref{eq:trudny-warunek-b} and such
that the set of indices $i$ such that $k_i\geq 2$ coincides with the set of
rightmost legs of the blocks of $\Pi$. 
% 
% [WYSTARCZY OGRANICZYC SUMOWANIE DO BOOLOWSKICH PARTYCJI?]
% 
% SOME REALLY SMART ARGUMENTS SHOULD APPEAR HERE 
% 
% This last assumption implies that
% condition \eqref{eq:trudny-warunek-b} is equivalent to the following one:
% \begin{equation}
% \underbrace{k_{r+1-|C_j|}+\cdots+k_r
% }_{|C_j| \text{ summands}} \leq
% \sum_{i\in C_j} n_{i}  \qquad \text{holds for each $1\leq j\leq l$ such that
% }. 
% \end{equation}

For simplicity, before dealing with the general case, we shall analyze first the
contribution of the trivial partition  which consists only of singletons. We
define $\Bad^{\trivial}_C$ to be the expression \eqref{eq:BadC} with the
sum over partitions replaced by only one summand for $\Pi=\big\{
\{1\},\{2\},\dots,\{r\} \big\}$, i.e.
\begin{equation}
\label{eq:manipulacje-zlym}
\Bad_C^{\trivial} = (-1)^{r-1} \sum_{k_1,\dots,k_r} 
\NN^{\Sigma_k}_{(k_1,1),\dots,(k_r,1)}, 
\end{equation}
where the sum runs over the same set as in Equation \eqref{eq:bad} with an
additional restriction $k_1,\dots,k_r\geq 2$.
%  or, in other words,
% $$ \Bad_C^{\text{trivial}}= (-1)^{k+r} \sum_{k_1,\dots,k_r}
% N^{\Sigma_k}_{(k_1,1),\cdots,(k_r,1)} $$

Corollary \ref{coro:order-does-not-matter} shows that we may change the order
of the elements in the sequence $(k_1,\dots,k_r)$ hence
\eqref{eq:manipulacje-zlym} holds true also if the sum on the right-hand side
runs over all integers $k_1,\dots,k_r\geq 2$ such that
$k_1+\cdots+k_r=n_1+\cdots+n_r$
and such that for each $1\leq j\leq l$ 
$$ \sum_{i\in C_j} k_i \leq
\sum_{i\in C_j} n_{i}.   $$
Therefore for an analogue of the sum \eqref{eq:to-chcemy-policzyc} Corollary
\ref{coro:good_numbers} shows that
$$\sum_{l\geq 0} \sum_{\substack{C=(C_1,\dots,C_l),\\ \emptyset\varsubsetneq
C_1\varsubsetneq 
\cdots \varsubsetneq C_l\varsubsetneq \{1,2,\dots,r\} }}
% \!\!\!\!\! 
(-1)^{l}\ \Bad_C^{\text{trivial}}=(-1)^{r-1}
\NN^{\Sigma_k}_{(n_1,1),\cdots,(n_r,1)}. $$
Notice the the right-hand side is the summand appearing in Corollary
\ref{coro:jak-policzyc-pochodne-po-R} for the trivial partition $\Pi$ which is
quite encouraging.

Having in mind the above simplified case let us tackle the general partitions
$\Pi$. Corollary \ref{coro:order-does-not-matter} shows that we may shuffle the
blocks of partition $\Pi$ hence from \eqref{eq:BadC} it follows that
\begin{equation}
\label{eq:BadC-final-fantasy}
\Bad_C = (-1)^{r-1} \sum_{\Pi\in P(1,2,\dots,r)} \sum_{\phi} 
\NN^{\Sigma_k}_{\left( \big( \phi(b) ,
|b| \big)  : b\in \Pi \right)}, 
\end{equation}
where the second sum runs over all functions $\phi$ which assign integer
numbers to blocks of $\Pi$ and such that:
\begin{itemize}
 \item $\phi(b)\geq |b|+1$ holds for every block $b\in\Pi$; 
 \item $\sum_{b\in\Pi} \phi(b)=n_1+\cdots+n_r$;
%  \item $ \left[ \sum_{\substack{b\in\Pi,\\ b \cap C_j \neq
% \emptyset }}
% \begin{cases} 
% \phi(b)     & \text{if } b\subseteq C_j, \\
% |b\cap C_j| & \text{otherwise} 
% \end{cases} \right]
% \leq \sum_{i\in C_j} n_{i}
% $ holds for each $1\leq j\leq l$.
 \item $$  \sum_{\substack{b\in\Pi,\\ b \cap C_j \neq
\emptyset }} \left( \phi(b) - \big|b\setminus C_j\big| \right)
\leq \sum_{i\in C_j} n_{i}
$$ holds for each $1\leq j\leq l$.
\end{itemize}
Therefore \eqref{eq:to-chcemy-policzyc} is equal to 
% \begin{equation}
% \label{eq:to-chcemy-policzyc}
$$ \sum_{\Pi\in P(1,2,\dots,r)}  \sum_{\phi} 
\NN^{\Sigma_k}_{\left( \big( \phi(b) ,
|b| \big)  : b\in \Pi \right)} \left[ \sum_{l\geq 0}
\sum_{\substack{C=(C_1,\dots,C_l),\\
\emptyset\varsubsetneq C_1\varsubsetneq 
\cdots \varsubsetneq C_l\varsubsetneq \{1,2,\dots,r\} }}
\!\!\!\!\! 
 (-1)^{l+r-1}  \right]. $$
% \end{equation}
Corollary \ref{coro:good_numbers_and_partitions} can be used to calculate the
expression in the bracket hence the above sum is equal to 
$$ \sum_{\Pi\in P(1,2,\dots,r)}   
\NN^{\Sigma_k}_{\left( \big( \sum_{i\in b} n_i  ,
|b| \big)  : b\in \Pi \right)} 
 (-1)^{r-|\Pi|}.  $$
Corollary \ref{coro:jak-policzyc-pochodne-po-R} finishes the proof.
% \begin{multline*} \sum_{l\geq 0} \sum_{\substack{C=(C_1,\dots,C_l),\\
% \emptyset\varsubsetneq C_1\varsubsetneq 
% \cdots \varsubsetneq C_l\varsubsetneq \{1,2,\dots,r\} }}
% % \!\!\!\!\! 
% (-1)^{l}\ \Bad_C = \\ 
% \sum_{\Pi\in P(1,2,\dots,r)} 
% \end{multline*}
\end{proof}

Proof of Theorem \ref{theo:rattan-sniady-true} is analogous (the reference to
Theorem \ref{theo:stanley-feray-old} should be replaced by Theorem
\ref{theo:stanley-feray-for-cumulants}) and we skip it.

\section{Graph decomposition} 
\label{sect:compar}
In this section we will compare our main result with the previous complicated
combinatorial description of the coefficients of Kerov's polynomials proposed by F\'eray
in \cite{F'eray-preprint2008}. This will lead us to a new proof of the main
result of this paper, Theorem \ref{theo:main} and Theorem
\ref{theo:rattan-sniady-true}.

\subsection{Reformulation of the previous result}

Let us consider the formal sum of the collection of graphs
$(\V^{\sigma_1,\sigma_2})_{\sigma_1,\sigma_2}$ over all factorizations
$\sigma_1 \cdot \sigma_2 = (1,2,\ldots,k)$ (these graphs were defined in Section
\ref{subsec:defgraphs} but in order to be compatible with the notation of the 
paper \cite{F'eray-preprint2008} it might be more
convenient to allow multiple edges connecting two cycles with the multiplicity
equal to the number of the elements in the common support). Let us apply the
local transformations presented on
Figure \ref{fig:local_transformations} (the reader can easily imagine the
generalizations of the the drawn transformation to bigger loops: for a given
oriented loop of length $2k$ we remove in $2^k-1$ ways all non-empty subsets of
the set of edges oriented from a black vertex to a white vertex with the plus
or minus sign depending if the number of removed edges is odd or even) to each
of the summands and let us iterate this procedure until we obtain a formal
linear combination of forests. Of course, the final result $S$ may depend on
the choice of the loops used for the transformations, so in order to have a
uniquely determined result we have to choose the loops in some special way, for
example as described in paper \cite{F'eray-preprint2008}, the details of which
will not be important for this article.

\begin{figure}[t]
\includegraphics{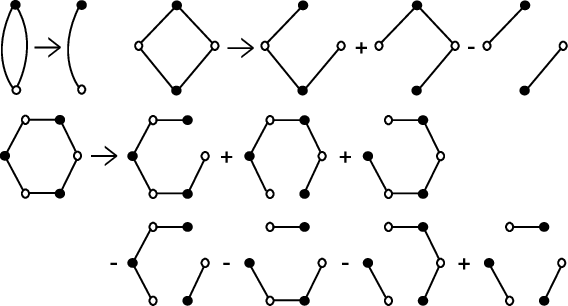} 
\caption{Local transformations on graphs.}
\label{fig:local_transformations}
\end{figure}

Then we have the following result:

\begin{theorem}[F\'eray \cite{F'eray-preprint2008}] \label{theo:feray_old}
The coefficient of $R_2^{s_2} R_3^{s_3} \ldots$ in $K_k$ is equal to
$(-1)^{1+s_2+s_3+\ldots}$ times 
the total sum of coefficients of all forests in
$S$ which consist of $s_i$ trees with one black and $i-1$ white vertices ($i$
runs over $\{2,3,\dots\}$).
%  and no other trees.
\end{theorem}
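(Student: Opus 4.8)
The plan is to take the Stanley–Féray character formula (Theorem \ref{theo:stanley-feray-old}) as the starting point. It already writes $\Sigma_k^{\p\times\q}$ as a signed sum, over all factorizations $\sigma_1\circ\sigma_2=(1,2,\dots,k)$, of a polynomial $N_{\V^{\sigma_1,\sigma_2}}(\p,\q)$ that depends only on the bipartite graph $\V^{\sigma_1,\sigma_2}$ (the sign $(-1)^{\sigma_1}$ being itself determined by the graph). I would extend $N_G$ to an arbitrary bipartite graph $G$ by the same coloring rule $\phi_1=\max$, and record that it is a polynomial function on $\Y$ which is multiplicative over the connected components of $G$. Note that, by Theorem \ref{theo:stanley-and-S}, a \emph{star} --- a tree with one black vertex and $i-1$ white vertices --- has the single Stanley monomial $\sum_j p_j q_j^{i-1}$, and hence represents the functional $S_i$. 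The goal is to reorganize the formal sum of graphs, by means of the local transformations of Figure \ref{fig:local_transformations}, into a formal combination of forests from which the \emph{free-cumulant} expansion of $\Sigma_k$ can be read off directly.

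The heart of the argument --- and the step I expect to be the main obstacle --- is to prove that collecting, after the full loop-removal procedure, the coefficients of the forests of stars reproduces exactly the expansion of $\Sigma_k$ into free cumulants rather than into the functionals $S_i$. The mechanism is that the inclusion--exclusion over the black-to-white edges of an oriented loop is precisely the graphical incarnation of the Möbius-type inversion recorded in \eqref{eq:r-s}; I would make this match term by term, identifying the alternating weights coming from removing $t$ edges of a loop with the coefficients $\tfrac{1}{l!}(-n+1)^{l-1}$ of the $S\to R$ change of variables. Two points require care: first, the \emph{well-definedness} of the output, since iterating the local moves may a priori depend on the order in which loops are chosen, so that one must fix a canonical family of loops as in \cite{F'eray-preprint2008} and check independence of the final forest coefficients; second, the book-keeping showing that exactly the single-black-vertex trees carry the free-cumulant data. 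A useful consistency check is that forest coefficients are sums of $\pm 1$, hence integers, in agreement with the integrality of the free-cumulant coefficients and in contrast with the rational coefficients of the $S$-expansion.

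Once the reduction to forests is justified, the remaining work is to assemble degrees and signs. By multiplicativity of $N_G$ over connected components, a forest made of $s_i$ stars of type $i$ represents the monomial $\prod_i R_i^{s_i}$; its total number of vertices $\sum_i i\, s_i$ and its number of black vertices $\sum_i s_i$ match conditions \ref{enum:boys-and-girls} and \ref{enum:ilosc2} of Theorem \ref{theo:main}. For the sign, write $(-1)^{\sigma_1}=(-1)^{k-W}$, where $W=\sum_i (i-1)s_i$ is the number of white vertices; since edge removal preserves the bipartition, each such forest inherits this sign, and the Euler relation $W+B\equiv k+1 \pmod 2$ (with $B=\sum_i s_i$, using that $\langle\sigma_1,\sigma_2\rangle$ is automatically transitive for a single-cycle target) gives $(-1)^{\sigma_1}=(-1)^{B-1}=(-1)^{1+s_2+s_3+\cdots}$, which therefore factors out uniformly. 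Summing the coefficients of all forests of the prescribed shape then yields exactly the coefficient of $R_2^{s_2}R_3^{s_3}\cdots$ in $K_k$, as asserted; this is also the identity that will subsequently let us recover Theorem \ref{theo:main} from Féray's description.
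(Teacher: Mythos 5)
The first thing to note is that the paper itself contains no proof of this statement: Theorem \ref{theo:feray_old} is imported verbatim from F\'eray's paper \cite{F'eray-preprint2008}, and the authors explicitly describe that source as ``a not easy paper''. So your attempt stands on its own, and it has a genuine gap at its center. The entire content of the theorem is that, after the loop-removal procedure, the coefficients of the star-forests compute the coefficients of the expansion of $\Sigma_k$ in \emph{free cumulants}. Your proposed mechanism for this --- that the inclusion--exclusion over black-to-white edges of a loop ``is precisely the graphical incarnation'' of the M\"obius inversion \eqref{eq:r-s}, to be matched ``term by term'' --- is exactly what needs to be proved, and the matching is never carried out. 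Moreover, the identification you propose is internally inconsistent: you first claim that a star with one black and $i-1$ white vertices ``represents the functional $S_i$'', and later that a forest of such stars ``represents the monomial $\prod_i R_i^{s_i}$''; both cannot hold, since $S_i\neq R_i$ for $i\geq 4$ (e.g.\ $S_4=R_4+\tfrac{3}{2}R_2^2$ by \eqref{eq:s-r}). In fact the first claim is already false: the graph polynomial of the star is $\sum_j p_j q_j^{i-1}$, whereas by \eqref{eq:rozwiniecie-s-w-stanleya} the Stanley polynomial of $S_i$ also contains the terms $(-1)^{r-1}(i-1)_{r-1}\,p_{j_1}\cdots p_{j_r}q_{j_r}^{i-r}$ for all $r\geq 2$; Theorem \ref{theo:stanley-and-S} only guarantees agreement in the coefficient of $p_1q_1^{i-1}$, not equality of the two polynomials.

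Two further facts are used without justification. First, you never verify that the local transformations of Figure \ref{fig:local_transformations} preserve the total signed sum of graph polynomials; without this invariance the forest combination $S$ has no relation to $\Sigma_k$ at all, and the sign bookkeeping is delicate because $(-1)^{\sigma_1}=(-1)^{k-|C(\sigma_1)|}$ involves $k$, the number of \emph{edges} of the multigraph $\V^{\sigma_1,\sigma_2}$, which changes under edge removal, while the vertex numbers do not. Second, the output $S$ also contains forests whose trees have several black vertices; you never explain why these do not pollute the extraction of the coefficient of $\prod_i R_i^{s_i}$, nor how one reads off the coefficient of an $R$-monomial (as opposed to an $S$-monomial, or a single Stanley coefficient) from a linear combination of forests --- since the change of basis \eqref{eq:s-r}--\eqref{eq:r-s} mixes monomials of different lengths, this bookkeeping is where the real work of F\'eray's theorem lies. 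Your final paragraph on degrees and signs (including the parity $W+B\equiv k+1 \pmod 2$, which incidentally follows from applying the sign character to $\sigma_1\sigma_2=(1,\dots,k)$ rather than from transitivity) is correct but peripheral: with the central identification missing, the theorem remains unproved.
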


We will reformulate this result in a form closer to Theorem \ref{theo:main}. For
this purpose, if $(\sigma_1,\sigma_2,q)$ is a triple verifying conditions
\ref{enum:a}--\ref{enum:d} and $F$ is a subforest of
$\V^{\sigma_1,\sigma_2}$ with the same set of vertices, we will say that $F$ 
is a $q$-forest if the following two conditions are fulfilled:
\begin{itemize}
 \item all cycles of $\sigma_2$ (black vertices) are in different connected
components,
 \item each cycle $c$ of $\sigma_2$ is the neighbor of exactly $q(c)-1$ cycles
of $\sigma_1$ (white vertices).
\end{itemize}

\begin{theorem}
\label{theo:feray_reformule}
Let $k\geq 1$ and let $s_2,s_3,\dots$ be a sequence of non-negative integers
with only finitely many non-zero elements. The coefficient of the monomial
$R_2^{s_2} R_3^{s_3} \cdots $ in the Kerov polynomial $K_{k}$ is equal to
the number of triples $(\sigma_1,\sigma_2,q)$ which fulfill conditions
\ref{enum:a}--\ref{enum:d} of Theorem \ref{theo:main} and such that
\begin{enumerate}[label=(\eeeee)]
%  \item $\sigma_1,\sigma_2$ is a factorization of the
% cycle; in other words $\sigma_1,\sigma_2\in \Sym(k)$ are such that $\sigma_1
% \circ \sigma_2=(1,2,\dots,k)$;
%  \item the number of cycles of $\sigma_2$ is equal to the
% number of\/ factors in
% the product $R_2^{s_2} R_3^{s_3} \cdots $; in other words
% $|C(\sigma_2)|=s_2+s_3+\cdots$;
%  \item the total number of cycles of $\sigma_1$ and $\sigma_2$ is equal to the
% degree of the product
% $R_2^{s_2} R_3^{s_3} \cdots $; in other words
% $|C(\sigma_1)|+|C(\sigma_2)|=2 s_2+3 s_3+4 s_4+\cdots$;
%  \item $q:C(\sigma_2)\rightarrow \{2,3,\dots\}$ is a coloring of the cycles of
% $\sigma_2$ with a property that each color $i\in\{2,3,\dots\}$ is used exactly
% $s_i$ times;
% we require that for every
% color $i\in\{2,3,\dots\}$ there are exactly $s_i$ cycles of $\sigma_2$
% with color $i$;
 \item \label{enum:eprim} when we apply the transformations from Figure
\ref{fig:local_transformations} as prescribed in \cite[Section
3]{F'eray-preprint2008}, in the resulting linear combination of forests there is
(exactly one) $q$-forest.
\end{enumerate}
\end{theorem}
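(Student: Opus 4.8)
The plan is to deduce Theorem~\ref{theo:feray_reformule} directly from Féray's Theorem~\ref{theo:feray_old}, which already expresses the coefficient of $R_2^{s_2}R_3^{s_3}\cdots$ as $(-1)^{1+s_2+s_3+\cdots}$ times the total coefficient, in the forest expansion $S$, of all forests whose components are $s_i$ trees with one black and $i-1$ white vertices. The first step is the observation that such a \emph{target forest} is forced to be a disjoint union of stars: a tree of a bipartite graph containing a single black vertex has all of its edges incident to that vertex. Reading off, for each black vertex $c$, the number of white leaves of its star plus one defines a coloring $q\colon C(\sigma_2)\to\{2,3,\dots\}$ using each color $i$ exactly $s_i$ times; the target forests are then precisely the $q$-forests, and conditions \ref{enum:a}--\ref{enum:d} are automatically built into this data (the number of black vertices is $s_2+s_3+\cdots$, the number of white vertices is $\sum_i (i-1)s_i$, and $q$ records the star types).

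Next I would reorganize the global sum. Since the local transformations of Figure~\ref{fig:local_transformations} only delete edges and never touch vertices, every forest occurring in the expansion of a single graph $\V^{\sigma_1,\sigma_2}$ is a \emph{spanning} subforest of it. Consequently a target forest $F$ contributes to the expansion of $\V^{\sigma_1,\sigma_2}$ only if $F$ is a spanning subforest, i.e.\ a $q$-forest, of that particular graph. Writing the coefficient of $F$ in $S$ as the sum over factorizations of its coefficient in each $\V^{\sigma_1,\sigma_2}$, and then grouping the target forests by the factorization $(\sigma_1,\sigma_2)$ and by the coloring $q$ they induce, the total forest coefficient of Theorem~\ref{theo:feray_old} becomes a sum over triples $(\sigma_1,\sigma_2,q)$ satisfying \ref{enum:a}--\ref{enum:d} of the signed number of $q$-forests actually produced by the procedure.

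The heart of the argument -- and what I expect to be the main obstacle -- is the following claim about Féray's prescribed procedure: for each triple $(\sigma_1,\sigma_2,q)$ satisfying \ref{enum:a}--\ref{enum:d}, the resulting linear combination of forests contains \emph{at most one} $q$-forest, and whenever one occurs its coefficient equals $(-1)^{1+s_2+s_3+\cdots}$. The uniqueness is exactly what justifies the parenthetical ``(exactly one)'' in the statement and, more importantly, it prevents any cancellation between distinct $q$-forests carrying opposite signs; note that $\V^{\sigma_1,\sigma_2}$ may well possess many $q$-forests as spanning subforests, so it is essential that the expansion singles out at most one of them. I would establish this by induction on the number of loop-removal steps, using the concrete choice of loops fixed in \cite[Section 3]{F'eray-preprint2008}: each step resolves an oriented loop by deleting a nonempty subset of its black-to-white edges with a sign determined by the parity of that subset, and I would argue that demanding the final graph be a union of stars of the prescribed types forces, step by step, a unique choice of surviving edges. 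The genuinely delicate point is the sign bookkeeping -- verifying that the product of the per-loop signs collapses to the single value $(-1)^{1+s_2+s_3+\cdots}$ regardless of the cycle rank of $\V^{\sigma_1,\sigma_2}$ -- and this is where the detailed structure of the transformations in \cite{F'eray-preprint2008} must be used in an essential way.

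Granting the claim, the conclusion is immediate. By Theorem~\ref{theo:feray_old} the coefficient of $R_2^{s_2}R_3^{s_3}\cdots$ equals $(-1)^{1+s_2+s_3+\cdots}$ times the sum over triples of the signed $q$-forest count; by the claim each triple for which a $q$-forest appears contributes $(-1)^{1+s_2+s_3+\cdots}$, while every other triple contributes $0$. The two signs cancel, so the coefficient equals the number of triples $(\sigma_1,\sigma_2,q)$ satisfying \ref{enum:a}--\ref{enum:d} for which the transformation procedure yields a $q$-forest -- that is, exactly condition \ref{enum:eprim}. This is precisely the assertion of Theorem~\ref{theo:feray_reformule}.
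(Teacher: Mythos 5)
Your reduction is sound as far as it goes, and it is the same first move the paper makes: the target forests of Theorem \ref{theo:feray_old} are forced to be disjoint unions of stars, the star sizes define the coloring $q$, the local transformations only delete edges so every forest in the expansion of $\V^{\sigma_1,\sigma_2}$ is a spanning subforest, and the global sum can therefore be regrouped over triples $(\sigma_1,\sigma_2,q)$. The final paragraph, \emph{granting} your central claim, is also correct. The genuine gap is that the central claim --- for each triple the prescribed procedure produces at most one $q$-forest, and when one occurs its coefficient is exactly $(-1)^{1+s_2+s_3+\cdots}$ --- is never proved: you announce an induction on loop-removal steps and yourself concede that the sign bookkeeping is the main obstacle. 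Moreover the mechanism you propose (``demanding the final graph be a union of stars of the prescribed types forces, step by step, a unique choice of surviving edges'') is not tenable as stated. Each loop resolution replaces a graph by $2^k-1$ signed summands, the graph $\V^{\sigma_1,\sigma_2}$ may contain many spanning $q$-forests, and a fixed final forest can be reached along several different histories whose signs disagree; the coefficient of a $q$-forest is thus a signed sum over histories, and what must be controlled is cancellation, not uniqueness of choices. No step-by-step forcing argument sees this.

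The paper's route to precisely this claim is different and is the real content of Section \ref{sect:compar}. It first proves an invariance statement (Lemma \ref{lem:nb_adm_inv}): the sum of coefficients of $q$-admissible graphs, each weighted by $(-1)^{\text{number of connected components}}$, is unchanged by \emph{every} local transformation, for an arbitrary choice of loops. The proof is not combinatorial bookkeeping on the expansion but goes through the transportation polytope of Proposition \ref{prop:existence-of-solution}: $q$-admissibility is non-emptiness of a polytope $P$ of positive solutions, a loop resolution corresponds to a decomposition $P=\bigsqcup_{A\neq\emptyset} P_A$, and comparing dimensions and Euler characteristics yields the signed identity. Applying this invariance to $\V^{\sigma_1,\sigma_2}$ --- which, being connected, is $q$-admissible if and only if condition \ref{enum:marriage} holds --- shows that the signed sum of the coefficients of the $q$-forests equals $(-1)^{1+s_2+s_3+\cdots}$ or $0$. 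Only then is Féray's sign property (\cite[Proposition 3.3.1]{F'eray-preprint2008}), valid for the prescribed loops, invoked to exclude cancellation and upgrade the signed sum to ``exactly one $q$-forest or none'' (Corollary \ref{coro:combi}); this simultaneously identifies condition \ref{enum:eprim} with condition \ref{enum:marriage}, tying the statement back to Theorem \ref{theo:main}. To complete your proof you would need an argument of this strength (or some other way of controlling the cancellations between histories); the induction you sketch does not supply one.
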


It is easy to see that this theorem is a reformulation of Theorem
\ref{theo:feray_old}.
A priori, it might seem that in the theorem above we should count each triplet
$(\sigma_1,\sigma_2,q)$ with multiplicity equal to the number of $q$-forests
appearing in the result, but we will prove in Corollary \ref{coro:combi} that it
is always equal to $0$ or $1$.

Comparing Theorem \ref{theo:feray_reformule} with Theorem \ref{theo:main} we
may wonder if conditions \ref{enum:marriage} and \ref{enum:eprim} are
equivalent. We will prove their equivalence in the following section.

%\begin{proposition}
%If\/ $\V^{\sigma_1,\sigma_2}$ is connected then
%$$\ref{enum:eprim} \implies \ref{enum:marriage}.$$
%\end{proposition}

%\begin{proof}
%Suppose that $F$ is a forest appearing when we iterate local
%transformations (in a certain way) on $\V^{\sigma_1,\sigma_2}$ with a
%property that all black vertices of $F$ are in different connected components.
%This means that there is a sequence of subgraphs of $\V^{\sigma_1,\sigma_2}$:
%$$\V^{\sigma_1,\sigma_2} = G_0 \supseteq G_1 \supseteq \ldots \supseteq G_{l-1}
%\supseteq G_l =F$$
%such that, for all $j$, $G_{j+1}$ appears in the result of a local
%transformation applied on $G_j$. Furthermore as $G_0$ (respectively, $G_l$)
%fulfills condition \eqref{enum:Gconnect} (respectively,
%\eqref{enum:G'nonconnect}) from Lemma \ref{lem:ineq_stricte}, there is an index
%$i$ such that $G:=G_i$ and $G':=G_{i+1}$ fulfill the conditions of Lemma
%\ref{lem:ineq_stricte}, therefore
%$$N_{G_{i+1}}(A) < N_{G_i}(A).$$
%But, as $\V^{\sigma_1,\sigma_2} = G_0 \supseteq G_i$ and $G_{i+1}  \supseteq G_l
%=F$, we also have
%$$\sum_{c \in A} \big[ q(c)-1\big] = N_{F}(A) \leq N_{G_{i+1}}(A) < N_{G_i}(A)
%\leq N_{V^{\sigma_1,\sigma_2}}(A)$$
%which finishes the proof.
%\end{proof}

\subsection{Equivalence of conditions \ref{enum:marriage} and \ref{enum:eprim}}

In Section \ref{subsec:condition-e-transportation} we introduced the
notion of $q$-admissibility of a graph. Recall that if a graph $G$ is connected
then it is $q$-admissible if and only if it satisfies condition
\ref{enum:marriage-graphs} which is a reformulation of \ref{enum:marriage}.
Notice also that if $G$ contains no loops then it is $q$-admissible if and only
if it is a $q$-forest.
% is a collection of trees of the following form: one black vertex $c\in
% V_\bullet$ connected to $q(c)-1$ leaves.

\begin{lemma}
\label{lem:nb_adm_inv}
The sum of coefficients of $q$-admissible graphs $G$ 
multiplied by $(-1)^{\text{(number of connected components of $G$)}}$ 
in a formal linear combination of
bipartite graphs with a given set of vertices and labeling
$q:V_\bullet\rightarrow\{2,3,\dots\}$ does not change after performing any
transformation of the form presented on Figure \ref{fig:local_transformations}.
\end{lemma}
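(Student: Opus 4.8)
The plan is to exploit linearity. Writing $\Psi(G)=(-1)^{c(G)}\,\mathbf 1[G\text{ is }q\text{-admissible}]$, where $c(G)$ denotes the number of connected components, the quantity in the statement is the value of the linear functional $L\big(\sum_G a_G\,G\big)=\sum_G a_G\,\Psi(G)$. Each transformation of Figure \ref{fig:local_transformations} rewrites a single graph $G$ (containing a distinguished oriented loop) as a signed combination of the graphs $G\setminus T$, where $T$ runs over the non-empty subsets of the set $E$ of the $k$ loop edges oriented from a black to a white vertex, with signs governed by the parity of $|T|$; so up to a global sign it suffices to prove the single relation
\begin{equation*}
\sum_{T\subseteq E}(-1)^{|T|}\,\Psi(G\setminus T)=0 .
\end{equation*}
First I would record that $\Psi$ is multiplicative over connected components: a graph is $q$-admissible if and only if each of its components is (immediate from the definition, since both $N_A$ and $\sum_{c\in A}(q(c)-1)$ split along components), and $(-1)^{c(G)}$ is visibly multiplicative. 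As the loop lies in a single component $K$ of $G$ and deleting loop edges affects only $K$, the contribution of the other components factors out of the sum, reducing everything to the case where $G=K$ is connected and contains the loop.

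Next I would translate admissibility into the transportation polytope of Proposition \ref{prop:existence-of-solution}. Let $P$ be the (bounded) polytope of non-negative $x=(x_{i,j})$ indexed by the edges of $G$ with $\sum_j x_{i,j}=1$ and $\sum_i x_{i,j}=q(j)-1$. Its only inequality constraints are $x_e\geq 0$, so its faces correspond bijectively to their supports. For $T\subseteq E$ the graph $G\setminus T$ is $q$-admissible exactly when the face $F_T:=P\cap\{x_e=0:e\in T\}$ has support equal to $E(G)\setminus T$; hence the admissible $T$ correspond bijectively to those faces $F$ of $P$ whose support contains $E_0:=E(G)\setminus E$ (all edges except the loop's black-to-white ones). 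Using that the bipartite incidence matrix of a subgraph $H$ has rank $|V|-c(H)$, one gets $\dim F_T=|E(G)|-|T|-|V|+c(G\setminus T)$, so on admissible terms $(-1)^{|T|}\Psi(G\setminus T)=(-1)^{|V|-|E(G)|}(-1)^{\dim F_T}$. Thus, up to a global sign, the sum equals $\sum_{F:\,\mathrm{supp}(F)\supseteq E_0}(-1)^{\dim F}$, and since $\sum_{\varnothing\neq F}(-1)^{\dim F}=1$ by the Euler relation for the nonempty polytope $P$, this equals $1-\chi(B)$, where $B=\{x\in P:x_e=0\text{ for some }e\in E_0\}$ is the part of $\partial P$ on which some non-removable coordinate vanishes. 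It then remains to show $\chi(B)=1$.

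The key geometric input — and the step I expect to be the crux — is the circulation supplied by the loop: the vector $v$ equal to $+1$ on the black-to-white loop edges, $-1$ on the white-to-black loop edges, and $0$ elsewhere lies in the homogeneous solution space, so $x\mapsto x+tv$ preserves all transportation constraints. Because each white loop-vertex meets exactly one black-to-white loop edge, these $k$ coordinates are affine functions of the others, and projecting them away identifies $P$ with a polytope $Q\subseteq\mathbb R^{E_0}$. Flowing in the direction $+v$ strictly increases the projected-out coordinates (so it never reaches their walls) and strictly decreases the white-to-black loop coordinates, which thus provide the only binding constraints, with continuous stopping time $\min_i x_{f_i}$. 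This straight-line flow is a deformation retraction of $Q$ — and, restricted, of $B$ — onto the subcomplex where some white-to-black loop coordinate vanishes, whence $\chi(B)=\chi(Q)=1$ and the alternating sum is $1-\chi(B)=0$. The main obstacle is essentially bookkeeping: accommodating possible multiple edges in $\V^{\sigma_1,\sigma_2}$, disposing of the degenerate cases (where $P=\varnothing$ every term already vanishes, and one must also treat lower-dimensional $P$), and verifying carefully that the flow stays inside both $P$ and $B$ — here the circulation $v$ is precisely what makes feasibility along the flow automatic.
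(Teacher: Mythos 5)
Your proof is correct, and it shares the paper's two essential ingredients—Proposition \ref{prop:existence-of-solution}, which turns $q$-admissibility into non-emptiness of the transportation polytope, and an Euler-characteristic identity driven by the circulation of the chosen loop—but the Euler-characteristic computation itself is organized quite differently. The paper works with the \emph{open} polyhedron $P$ of strictly positive solutions and flows in the direction opposite to yours (it decreases the black-to-white loop coordinates), so the binding walls are exactly the removable edges; this partitions $P$ into locally closed pieces $P_A$ indexed by the non-empty sets $A\subseteq E$ on which $\min_{f\in E}x_f$ is attained, each piece being a one-dimension-higher cylinder over the positive-solution set of $G\setminus A$. Additivity of the compactly supported Euler characteristic over this partition, combined with the rank--nullity count \eqref{eq:dimension}, then produces the identity term by term: each summand $(-1)^{|A|-1}(-1)^{c(G\setminus A)}\left[G\setminus A \text{ is $q$-admissible}\right]$ is, up to a global sign, the characteristic of one piece. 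You instead work with the \emph{closed} polytope, match the summands with the faces whose support contains $E_0$ (via the support--face bijection), invoke the classical Euler--Poincar\'e relation, and reduce everything to $\chi(B)=1$ for the boundary subcomplex $B$, which your flow—run the other way, so that the binding walls are the white-to-black loop edges, which are never removed—establishes by retracting both $P$ and $B$ onto the same subcomplex. The trade-off: the paper's route uses only linear algebra plus additivity of $\chi_c$ on disjoint locally closed pieces, while yours requires homotopy invariance of $\chi$ but yields a stronger geometric fact ($B$ is homotopy equivalent to the convex set $P$, hence contractible, in the spirit of the contractibility remark preceding Lemma \ref{lem:zle_zbiory}). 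Two simplifications to your write-up: the projection onto $\mathbb{R}^{E_0}$ is dispensable, since the straight-line flow can be run inside $P$ itself (increasing coordinates can never violate non-negativity), and dropping it removes the only place where you need the loop to be simple (one black-to-white loop edge per white loop-vertex); likewise the reduction to connected $G$ is harmless but never used afterwards, as the Euler relation and the retraction apply to any non-empty $P$.
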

\begin{proof}
Let us choose some oriented loop $L$ in graph $G$ and let us denote by  $E$ the
set of edges which can be erased in the corresponding local transformation from
Figure \ref{fig:local_transformations}; in other words $E$ consists of every
second edge in the loop $L$.

Consider the convex polyhedron $P$ (without boundary) which is the set of all
positive solutions $(x_e)_{\text{$e$ is an edge of $G$}}$ to the system of
equations from condition \ref{enum:transportation}. 
% From the following on we shall assume that $P$ is non-empvty;
% the reader may check that in the opposite case the following proof requires
% only minor modifications.

If $f$ is a real function on the set of edges of $G$ and $v$ is a vertex of $G$
we define $\big(\Phi(f) \big)(v)$ to be the sum of values of $f$ on edges
adjacent to $v$. If $P$ is non-empty then its dimension is equal to the
dimension of $\ker \Phi$. It is a simple exercise to show that $\Imag\Phi$
consists of all functions on vertices of $G$ with a property that for each
connected component of $G$ the sum of values on black vertices is equal to the
sum of values on white vertices hence 
$$\dimension \Imag \Phi= (\text{number of vertices of $G$})- (\text{number of
components of $G$}).$$
 It follows from rank-nullity theorem that
\begin{multline} 
\label{eq:dimension}
\dimension P = \dimension \ker \Phi =  \\
(\text{number of edges of $G$}) - (\text{number of vertices of
$G$}) + \\ (\text{number of connected components of
$G$}). 
\end{multline}

For a positive solution $(x_e)$ of our system of equations and a real number
$t$ we define 
$$y_{e}=\begin{cases}
         x_{e} & \text{if } e \notin L,\\
	 x_{e} + t & \text{if } e \in (L \setminus E),\\
	 x_{e} - t & \text{if } e \in E.
        \end{cases}$$
which is also a solution. Let $t$ be the minimal positive number for which
$(y_e)$ is not positive. In this way we define a map $\Pi:(x_e)\mapsto (y_e)$.

For any non-empty $A\subseteq E$ we define $P_A$ to be the set of positive
solutions with a property that 
$$\forall_{e\in E}\quad  e\in A  \iff  x_e=\min_{f\in E} x_{f}. $$
Since the defining condition for $P_A$ can be written in terms of some equations
and inequalities it follows that $P_A$ is a convex polyhedron.
It is easy to check that
\begin{multline*} \Pi(P_A) = \big\{ (x_e) : \text{non-negative solution such
that } \\  \forall_{e: \text{ edge of G}}\quad  (x_e=0) \iff (e\in A) \big\}.
\end{multline*}
The latter set can be identified with the set of positive solutions for our
system of equations corresponding to the graph $G'=G\setminus A$.
It follows that 
\begin{multline}
\label{eq:dimension2}
  \dimension P_A = 1+ \dimension \Pi(P_A)= \\ 1+
\left(\text{number of edges of $(G\setminus A)$}\right) - 
 (\text{number of vertices of $G$}) +\\ \left(\text{number
of connected components of $(G\setminus A)$}\right),
\end{multline}
where the last equality is just \eqref{eq:dimension} applied to $G'=G\setminus
A$.

It is easy to see that $P=\bigsqcup_{A\neq\emptyset} P_A$ is a disjoint union
therefore we have the equality between the Euler characteristics:
$$ \chi(P)= \sum_{A\neq \emptyset} \chi(P_A) $$
which thanks to  \eqref{eq:dimension} and \eqref{eq:dimension2} shows that
\begin{multline}
\label{eq:euler-charcteristic}
 (-1)^{(\text{number of connected components of $G$})} \ [\text{$P$ is
non-empty}] = \\ \sum_{A\neq \emptyset} (-1)^{|A|-1}\
 (-1)^{(\text{number of components of $G\setminus A$})}\ [\text{$P_A$ is
non-empty}] , 
\end{multline}
where we use the convention that 
$$[\text{(condition)}]=\begin{cases} 1 & \text{if
(condition) is true}, \\ 0 & \text{otherwise}. \end{cases} $$

% The right-hand side 
% is a convex polyhedron; it 
Proposition \ref{prop:existence-of-solution} shows that $P$ (respectively,
$P_A$) is non-empty if and only if $G$ (respectively, $G\setminus A$) is
$q$-admissible therefore \eqref{eq:euler-charcteristic} is equivalent to
\begin{multline*}
 (-1)^{(\text{number of components of $G$})} \ [\text{$G$ is $q$-admissible}]  =
\\ \sum_{A\neq \emptyset} (-1)^{|A|-1}\
 (-1)^{(\text{number of components of $G\setminus A$})}\ [\text{$(G\setminus A)$
is $q$-admissible}] , 
\end{multline*}
which is the desired equality.
\end{proof}

\begin{corollary} \label{coro:combi}
Suppose that $(\sigma_1,\sigma_2,q)$ is a triple verifying the conditions \ref{enum:a}--\ref{enum:d} of
Theorem \ref{theo:main}. If we iterate local transformations from Figure
\ref{fig:local_transformations} on $\V^{\sigma_1,\sigma_2}$ until we obtain a
formal linear combination of forests (not necessarily choosing the loops as
prescribed in \cite{F'eray-preprint2008}) then the sum of coefficients of
$q$-forests in the result is equal to 
$$\begin{cases} (-1)^{1+s_2+s_3+\ldots} & \text{if condition
           \ref{enum:marriage} is fulfilled;}\\
          		   0 & \text{otherwise.} 
 \end{cases}$$

In the case when we perform the transformations as prescribed in \cite[Section
3]{F'eray-preprint2008}, the sign property of this decomposition
(\cite[Proposition 3.3.1]{F'eray-preprint2008}) implies
that there is exactly one $q$-forest (with the appropriate sign) in the
resulting sum if condition \ref{enum:marriage} is fulfilled and there are no
$q$-forests otherwise; in other words condition \ref{enum:marriage} is
equivalent to \ref{enum:eprim}.
\end{corollary}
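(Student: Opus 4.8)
The plan is to apply the invariance from Lemma \ref{lem:nb_adm_inv} to the two ends of the chain of local transformations. First I would note that, because $\sigma_1\circ\sigma_2=(1,2,\dots,k)$ is a single cycle, the group $\langle\sigma_1,\sigma_2\rangle$ acts transitively and so the bipartite graph $\V^{\sigma_1,\sigma_2}$ is connected. To a formal linear combination of bipartite graphs on the fixed vertex set $C(\sigma_1)\sqcup C(\sigma_2)$, carrying the fixed labeling $q$, I attach the quantity
$$ \Phi = \sum_{G} (\text{coefficient of $G$})\,[\text{$G$ is $q$-admissible}]\,(-1)^{(\text{number of connected components of $G$})}. $$
By Lemma \ref{lem:nb_adm_inv} the value of $\Phi$ is unchanged by every transformation from Figure \ref{fig:local_transformations}, hence $\Phi$ takes the same value on the starting graph $\V^{\sigma_1,\sigma_2}$ and on the final linear combination of forests, irrespective of which loops were used along the way.

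Evaluating $\Phi$ at the start is immediate: since $\V^{\sigma_1,\sigma_2}$ is connected, it is $q$-admissible exactly when it satisfies condition \ref{enum:marriage-graphs}, which is a restatement of condition \ref{enum:marriage}; having a single component, it contributes
$$ \Phi_{\text{initial}} = -\,[\text{condition \ref{enum:marriage} holds}]. $$
To evaluate $\Phi$ at the end I first recall that a forest has no loops, so among forests the $q$-admissible ones are precisely the $q$-forests. The key step is counting the components of a $q$-forest. Since the graph is bipartite and each component of a $q$-forest contains at most one black vertex, every component containing a black vertex $c$ must be a star with centre $c$ and, by definition of a $q$-forest, exactly $q(c)-1$ white leaves. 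These stars use $\sum_{c\in C(\sigma_2)} q(c)$ vertices in total; by condition \ref{enum:kolorowanie} this equals $\sum_i i\,s_i$, which by condition \ref{enum:boys-and-girls} is precisely $|C(\sigma_1)|+|C(\sigma_2)|$, the total number of vertices. Thus no isolated white components remain and every $q$-forest has exactly $|C(\sigma_2)|=s_2+s_3+\cdots$ connected components. Consequently
$$ \Phi_{\text{final}} = (-1)^{s_2+s_3+\cdots}\,(\text{sum of coefficients of the $q$-forests}). $$

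Equating $\Phi_{\text{initial}}=\Phi_{\text{final}}$ and solving for the sum of coefficients of the $q$-forests yields $(-1)^{1+s_2+s_3+\cdots}$ when condition \ref{enum:marriage} holds and $0$ otherwise, which is the asserted dichotomy. For the concluding statement I would then invoke the sign property \cite[Proposition 3.3.1]{F'eray-preprint2008} of the prescribed decomposition: there every forest, in particular every $q$-forest, occurs with one and the same sign, so a signed total equal to $\pm 1$ forces exactly one $q$-forest when condition \ref{enum:marriage} holds and none otherwise, giving the equivalence of \ref{enum:marriage} and \ref{enum:eprim}. I expect the component count for $q$-forests to be the only delicate point, but it collapses to the bookkeeping identity $\sum_{c} q(c)=\sum_i i\,s_i=|C(\sigma_1)|+|C(\sigma_2)|$ supplied by conditions \ref{enum:kolorowanie} and \ref{enum:boys-and-girls}.
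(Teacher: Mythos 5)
Your proposal is correct and is essentially the paper's own argument: Corollary \ref{coro:combi} is derived by applying the invariant of Lemma \ref{lem:nb_adm_inv} at both ends of the chain of transformations, evaluating it on the connected initial graph $\V^{\sigma_1,\sigma_2}$ (where $q$-admissibility reduces to condition \ref{enum:marriage}) and on the final linear combination of forests (where $q$-admissible graphs are exactly the $q$-forests, each having $s_2+s_3+\cdots$ components), then invoking F\'eray's sign property for the last claim. Your explicit verification that every $q$-forest is a disjoint union of $|C(\sigma_2)|$ stars covering all vertices, via $\sum_c q(c)=|C(\sigma_1)|+|C(\sigma_2)|$, is precisely the bookkeeping the paper leaves implicit.
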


The above corollary together with Theorem \ref{theo:feray_reformule} give
another proof of the main result of the paper, Theorem \ref{theo:main}.
Analogous results can be stated for
the situation presented in Theorem \ref{theo:rattan-sniady-true}.

\appendix

\section{Results obtained after this paper has been submitted for publication}

In this appendix we present results which became available after this paper has been submitted for publication. Thus
they are not contained in the version published in \emph{Advances in Mathematics}.

\subsection{Closed walk interpretation of condition \ref{enum:marriage}}

\begin{proposition}
Condition \ref{enum:marriage} is equivalent
to the following one:
\begin{enumerate}[label=(e$^*$)]
\item \label{enum:closed-walk}
it is possible to chose orientations on the edges of the bipartite graph
$\V^{\sigma_1,\sigma_2}$ in such a way that: 
\begin{itemize}
\item every white vertex has exactly one
outgoing edge and every black vertex $j\in C(\sigma_2)$ has exactly $q(j)-1$
incoming edges, \item if we interpret orientations of edges as directions of
one-way streets, there exists a closed walk in the graph such that every black
vertex is visited at least once. 
\end{itemize}
\end{enumerate}
\end{proposition}
\begin{proof}
There is a bijective correspondence between the arrangements of marriages as 
in Section \ref{subsec:condition-e} and the arrangements of orientations of
edges given as follows: for any pair $i\in C(\sigma_1)$, $j\in C(\sigma_2)$ of
connected vertices, if a boy $i$ is married to a girl $j$, we draw an oriented
edge from vertex $i$ to vertex $j$; otherwise we draw an oriented edge in the
opposite direction.

Assume that condition \ref{enum:marriage} holds true. Condition
\ref{enum:marriage-prim} shows that it is possible to arrange marriages; we
fix the corresponding orientations of the edges.
Let $j\in C(\sigma_2)$ be a black vertex and let $A\subseteq C(\sigma_2)$
(respectively, $B\subseteq C(\sigma_1)$) be the set of black (respectively,
white) vertices $j'$ with a property that there exists a walk from $j'$ to $j$.
% For every white vertex $i\in B$ there is a walk $(i,j',\dots,j)$;
% furthermore, vertex $j'$ is uniquely determined as the wife of $i$. This shows
% that $j'\in A$. 
It is easy to see that the set of husbands of $A$ is equal to $B$. Furthermore,
every vertex in $B$ is connected only to vertices from $A$, therefore it is not
possible arrange marriages so that the set of wives of $B$ is different from
$A$; therefore it is not possible to arrange marriages so that the set of
husbands of $A$ is different from $B$. From condition \ref{enum:marriage-prim}
it follows that $A=C(\sigma_2)$. In this way we proved that any two black
vertices can be connected by a walk. By combining such walks we get the desired
closed walk which visits every black vertex at least once.

Conversely, assume that condition \ref{enum:closed-walk} holds true and let
$A\varsubsetneq C(\sigma_2)$, $A\neq\emptyset$ be a non-trivial subset. The
choice of orientations of the edges in the graph gives rise to some choice of
marriages. We denote by $B\subseteq C(\sigma_1)$ the set of husbands of $A$. In
the closed walk $(\dots,j,i,\dots)$ given by condition \ref{enum:closed-walk}
there must be a neighboring pair of vertices such that $j\in A\cup B$ and
$i\notin A\cup B$. It is easy to see that it is only possible if $j\in A$ and
$i\in C(\sigma_1)\setminus B$. This shows that the set of possible husbands for
$A$ contains  $B\sqcup\{i\}$ as a subset therefore condition \ref{enum:marriage}
is fulfilled. 
\end{proof}

\subsection{General formula for Kerov polynomials}

\begin{theorem}
\label{lem:extract-kerov}
Let $\mathcal{G}$ be a finite collection of connected bipartite graphs and let
$\mathcal{G}\ni G \mapsto m_G$ be a scalar-valued function on it. We assume that
$$ F(\lambda) = \sum_{G\in\mathcal{G}} m_G\ N_G(\lambda) $$
is a polynomial function on the set of Young diagrams; in other words $F$ can
be expressed as a polynomial in free cumulants.

Let $s_2,s_3,\dots$ be a sequence of non-negative
integers with only finitely many non-zero elements; then
$$ \left[ R_2^{s_2} R_3^{s_3} \cdots\right] F =
(-1)^{s_2+2s_3+3s_4+\cdots+1} \sum_{G\in\mathcal{G}} \sum_{q}  m_G,$$
where the sums runs over $G\in\mathcal{G}$ and $q$ such that:
\begin{enumerate}[label=(\alph*)]
 \addtocounter{enumi}{1}
 \item
\label{enum:ilosc2_Graphs}
the number of the black vertices of $G$ is
equal to $s_2+s_3+\cdots$;
 \item
\label{enum:boys-and-girls_Graphs}
the total number of vertices of $G$ is equal to $2 s_2+3 s_3+4 s_4+\cdots$;
 \item
\label{enum:kolorowanie_Graphs}
$q$ is a function from the set of
the black vertices to the set $\{2,3,\dots\}$; we require that each number
$i\in\{2,3,\dots\}$ is used exactly $s_i$ times;
\item
\label{enum:marriage_Graphs}
for every subset $A\subset V_\circ(G)$ of black vertices of $G$
which is nontrivial (i.e., $A\neq\emptyset$ and $A\neq V_\circ(G)$) there are
more than $\sum_{v\in A} \big( q(v)-1 \big)$ white vertices which are
connected to at least one vertex from $A$.
\end{enumerate}
\end{theorem}

In this paper we proved this result in the special case when
$F=\Sigma_n$ and $\mathcal{G}$ is the (signed) collection of bipartite maps
corresponding to all factorizations of a cycle, however it is not difficult to
verify that the proof presented in this article works without any modifications also in
this more general setup.

\section*{Acknowledgments}

Research of P{\'S} is supported by the MNiSW research grant P03A 013 30, by the
EU Research Training Network ``QP-Applications'', contract HPRN-CT-2002-00279 and
by the EC Marie Curie Host Fellowship for the Transfer of Knowledge ``Harmonic
Analysis, Nonlinear Analysis and Probability'', contract MTKD-CT-2004-013389.

PŚ thanks Marek Bożejko, Philippe Biane, Akihito Hora, Jonathan Novak,
Światosław Gal and Jan Dymara for several stimulating discussions during various
stages of this research project.

\bibliographystyle{alpha}

\bibliography{biblio2008}

\end{document}